\DocumentMetadata{
    lang=en-GB,
    tagging=on,
}
\documentclass{article}

\usepackage{geometry}
\usepackage[british]{babel}

\usepackage{enumitem}

\usepackage{booktabs, array}

\usepackage[svgnames,dvipsnames]{xcolor}

\usepackage{graphicx}
\graphicspath{ {./figures/} }
\usepackage{pdflscape}
\usepackage{caption}
\usepackage{tikz}
\usetikzlibrary{calc}
\usetikzlibrary{patterns,patterns.meta}
\usetikzlibrary{decorations.markings}
\usepackage{pgfplots}
    \pgfplotsset{
        compat=1.18,
        table/search path={./figures/}
    }

\usepackage{etoolbox}  

\usepackage{amsmath}
\usepackage{amssymb}
\usepackage{prodint}
\usepackage{siunitx}

\usepackage{hyperref}
\hypersetup{
	pdftitle={Survival Isotonic Distributional Regression (S-IDR)},
	pdfauthor={Martin Bladt, Alexander Henzi, Bram van den Heuvel, Johanna Ziegel},
	colorlinks=true,
	linkcolor=MidnightBlue,
	citecolor=MidnightBlue,
	urlcolor=MidnightBlue
}
\usepackage{doi}

\usepackage{natbib}
\usepackage{algpseudocode}
\usepackage{algorithm}

\usepackage[nameinlink,capitalize,noabbrev]{cleveref} 
\usepackage{amsthm}
\usepackage{thmtools}
\declaretheorem[name=Theorem]{theorem}
\declaretheorem[name=Proposition,sibling=theorem,refname={Proposition,Propositions}]{proposition}
\declaretheorem[name=Lemma,sibling=theorem,refname={Lemma,Lemmas}]{lemma}
\declaretheorem[name=Corollary,sibling=theorem,refname={Corollary,Corollaries}]{corollary}
\declaretheorem[name=Condition,refname={Condition,Conditions}]{condition}
\declaretheorem[name=Condition,numberwithin=condition,refname={Condition,Conditions}]{subcondition}
\theoremstyle{definition}
\declaretheorem[name=Definition]{definition}

\newtheorem{remark}{Remark}
\crefname{appendix}{Appendix}{Appendices}
\Crefname{appendix}{Appendix}{Appendices}

\usepackage{mathbbol}
\DeclareSymbolFontAlphabet{\amsmathbb}{AMSb}%
\newcommand{\pr}{{\amsmathbb{P}}}
\newcommand*\dd{\mathop{}\!\mathrm{d}}
\newcommand\independent{\protect\mathpalette{\protect\independenT}{\perp}}
\def\independenT#1#2{\mathrel{\rlap{$#1#2$}\mkern2mu{#1#2}}}
\DeclareMathOperator{\clamp}{clamp}
\DeclareSymbolFont{bbold}{U}{bbold}{m}{n}
\DeclareMathSymbol{\bboldone}{\mathord}{bbold}{`1}
\newcommand{\ind}[1]{\bboldone_{\{#1\}}}

\newcommand{\KM}[1]{\hat{\amsmathbb{F}}_{#1}}
\newcommand{\RKM}[1]{\widetilde{\amsmathbb{F}}_{#1}}
\newcommand{\R}{\amsmathbb{R}}
\newcommand{\Rp}{\amsmathbb{R}_+}
\newcommand{\Ucdf}[2]{U_{[#1,#2]}(y)}
\newcommand{\leso}{\preceq_{\mathrm{st}}}
\newcommand{\lehro}{\preceq_{\mathrm{hro}}}

\renewcommand{\subset}{\subseteq}

\usepackage{xspace}
\defcitealias{barmiInferencesStochasticOrdering2005}{EBM}
\newcommand{\EBM}[1][]{%
  \citetalias{barmiInferencesStochasticOrdering2005}%
  \ifx\\#1\\\else\ [#1]\fi
  \xspace
}
\defcitealias{parkPointwiseNonparametricMaximum2012}{PRK}
\newcommand{\PRK}[1][]{%
  \citetalias{parkPointwiseNonparametricMaximum2012}%
  \ifx\\#1\\\else\ [#1]\fi
  \xspace
}

\DeclareRobustCommand{\mx}{\tikz[baseline=-0.6ex]{\draw[thick] (-0.6ex,-0.6ex) -- (0.6ex,0.6ex) (-0.6ex,0.6ex) -- (0.6ex,-0.6ex);}\xspace}
\DeclareRobustCommand{\mo}{\tikz[baseline=-0.6ex]{\draw[thick] (0,0) circle (0.6ex);}\xspace}
\DeclareRobustCommand{\ms}{\tikz[baseline=-0.6ex]{\draw[thick] (-0.6ex,-0.6ex) rectangle (0.6ex,0.6ex);}\xspace}

\usepackage[affil-it]{authblk}

\title{Survival Isotonic Distributional Regression (S-IDR)}
\author[1]{Martin Bladt\thanks{\href{mailto:martinbladt@math.ku.dk}{martinbladt@math.ku.dk}}}
\author[2]{Alexander Henzi\thanks{\href{mailto:henzia@tsinghua.edu.cn}{henzia@tsinghua.edu.cn}}}
\author[3]{Bram van den Heuvel\thanks{\href{mailto:bram.vandenheuvel@stat.math.ethz.ch}{bram.vandenheuvel@stat.math.ethz.ch}}}
\author[3]{Johanna Ziegel\thanks{\href{mailto:ziegel@stat.math.ethz.ch}{ziegel@stat.math.ethz.ch}}}
\affil[1]{Department of Mathematical Sciences, University of Copenhagen}
\affil[2]{Department of Statistics and Data Science, Tsinghua University}
\affil[3]{Seminar for Statistics, ETH Z\"urich}
\date{\today}

\begin{document}
\maketitle

\begin{abstract}
    We introduce Survival-IDR (S-IDR), a nonparametric estimator of conditional survival distributions under order restrictions, extending Isotonic Distributional Regression (IDR; \citep{henziIsotonicDistributionalRegression2021}) to right-censored outcomes. S-IDR has no tuning parameters and accommodates continuous, discrete, and partially ordered covariates. We first study the direct Kaplan--Meier adaptation of IDR: it is uniformly consistent at the minimax rate, but only when the conditional outcomes are hazard-rate ordered. We trace this restriction to the Kaplan--Meier estimator's failure to satisfy the Cauchy mean value property on non-i.i.d.\ samples, and use the diagnosis to construct S-IDR. The S-IDR estimator is uniformly consistent under only stochastic dominance of the conditional outcomes, attains the minimax rate when the smoothness of the conditional CDFs is known, and admits a known cross-threshold PAVA acceleration. We further embed S-IDR in a distributional single-index framework on a benchmark suite, and apply it in a case study that validates the MELD score used for liver-transplant wait list management. Accompanying R, Python and Rust packages are available at \url{https://github.com/AlexanderHenzi/isodistrreg}.
\end{abstract}

\setcounter{tocdepth}{2}
\tableofcontents

\section{Introduction}
When modelling an outcome from covariates --- say, predicting short-term mortality from liver failure given a patient's lab measurements --- a practitioner typically estimates the response’s conditional mean. Complete conditional distributions of the response given the covariate are more informative, but inherently harder to estimate. In survival analysis, the response is commonly time-to-event data and (partially) right-censored. Despite this additional difficulty, distributional targets in the form of survival curves are common, enabled by simplifying and possibly restrictive modelling assumptions. In the case of liver donation urgency, such a distributional target could represent a simultaneous estimate for the probability of death due to liver failure on each day in the next three months. Often, the distributional regression estimate should adhere to a certain directionality: when some clinical lab measurements are higher, like INR (the \emph{International Normalized Ratio} measures blood clotting factors impacted by liver function decline), then predicted survival times should be lower (or at least not higher). This work focuses on distributional survival regression under such order restrictions and proposes the first nonparametric estimator that can work with continuous covariates.

Distributional regression dates back to the nineteenth century, when several conditional quantiles were modelled jointly to characterise dispersion around the centre of the conditional distribution \citep{Galton1889,parzenQuantileProbabilityStatistical2004}. With growing computational power, simultaneous estimation of many quantiles became feasible \citep{koenkerRegressionQuantiles1978,koenkerComputingRegressionQuantiles1987}, effectively approximating the full conditional distribution. However, a quantile-based model represents conditional distributions only if the quantiles do not cross \citep{bassettjr.EmpiricalQuantileFunction1982}. Distributional regression methods estimate conditional distributions directly, from which functionals like the mean or median can be derived, see \citet{kneibRageMeanReview2023, kleinDistributionalRegressionData2024} for reviews. Distributional estimates should be calibrated to accurately reflect uncertainty while being as informative as possible \citep{gneitingProbabilisticForecastsCalibration2007}.

Isotonic regression is a nonparametric method which imposes order restrictions and is widely used in practical modelling. For example, mechanical failures tend to occur earlier if the quality of the materials used is lower, and firms with poorer credit rating are more likely to default in the next year. Order restrictions have been elaborately investigated in testing and mean regression \citep{barlowStatisticalInferenceOrder1972,robertsonOrderRestrictedStatistical1988}, but less so in the setting of distributional regression, where the conditional distributions are constrained to be ordered \citetext{\citealp{lehmannOrderedFamiliesDistributions1955}; \citealp[Chapter~8]{rossStochasticProcesses1983}}. Consistent quantiles in isotonic regression have previously been studied in \citet{moschingMonotoneLeastSquares2020} and in the Isotonic Distributional Regression (IDR) estimator of \citet{henziIsotonicDistributionalRegression2021}, which we generalise in this work. Isotonic regression can be considered a generalisation of other models whose quantiles move in one direction with the covariate(s) (e.g., Cox PH, AFT), but with the direction fixed in advance. When this does not hold, the distributional methods proposed in this work can be combined with general survival regression techniques to provide consistent distributional estimates using an index model approach as taken by \citet{henziDistributionalSingleIndex2023, walzEasyUncertaintyQuantification2024, balabdaouiEstimationConvergenceRates2024}.

Survival modelling involves time-to-event data, which may represent death due to liver failure, contraction or recurrence of a disease, or mechanical failure, among many others \citep{linSurvivalAnalysis2014}. Often, a ``nuisance'' censoring mechanism is involved, obscuring the exact value of the variable of interest and revealing only that it falls within a (possibly open-ended) interval. For example, a patient may die from a different cause than is being studied, a machine part might be replaced before a mechanical failure occurs, or a firm could be acquired, making it impossible to observe a default that may have occurred otherwise. Censoring may also be present in less traditional settings, such as in insurance claims data, which could be censored because only the claim amount and the policy limit are recorded, while the variable of interest is the total loss amount before policy limits \citep{charpentierComputationalActuarialScience2014}. Survival analysis can incorporate such censoring, without which estimation would be biased.
Classic parametric models for time-to-event data include the Cox model \citep{coxRegressionModelsLifeTables1972}, Tobit \citep{breenRegressionModelsCensored1996}, accelerated failure time (AFT) \citep{buckleyLinearRegressionCensored1979}, and censored linear quantiles \citep{portnoyCensoredRegressionQuantiles2003}. These models impose strong assumptions on the response distribution and its dependence on covariates --- ordered or even proportional hazards, for example --- which can be reasonable in specific applications such as AFT in \citet{stroustrupTemporalScalingCaenorhabditis2016}, but are often violated, particularly in medical studies \citep{stensrudWhyTestProportional2020}. Semi-parametric approaches such as generalised linear models (GLM) \citep{nelderGeneralizedLinearModels1972}, generalised additive models~(GAM) \citep{hastieGeneralizedAdditiveModels2017}, and extensions like GAMs for location, scale and shape~(GAMLSS) \citep{stasinopoulosGeneralizedAdditiveModels2008} relax these constraints by allowing a flexible parametric family whose parameters vary with covariates; they can also accommodate general censoring \citep{stasinopoulosGeneralizedAdditiveModels2008}. Nonparametric models go further by making minimal distributional assumptions, with model complexity often growing with the data size; examples include random forests \citep{breimanRandomForests2001}, later adapted to distributional/quantile regression \citep{meinshausenQuantileRegressionForests2006,cevidDistributionalRandomForests2022} and right-censoring via random survival forests \citep{ishwaranRandomSurvivalForests2008}. Deep learning methods have also been proposed for time-to-event data, though many incorporate parametric components \citep{wiegrebeDeepLearningSurvival2024}. For distributional regression with right-censoring \emph{under order restrictions} specifically, prior nonparametric estimators of \citet{barmiInferencesStochasticOrdering2005} and \citet{parkPointwiseNonparametricMaximum2012} require ordinal covariates.
Survival models are regularly used to compare risk, as in the liver transplant application. Most provide a ``risk score'', based either on the survival probability up to a specific time or on some other (scalar) summary of the estimated survival distribution. For example, in the Cox proportional hazard model, this score is simply the inner product of parameter and covariate, while some nonparametric methods like random survival forests \citep{ishwaranRandomSurvivalForests2008} also provide such a score.

This work lies at the intersection of distributional regression, survival modelling, and order constraints. We propose the first nonparametric estimators of conditional survival distributions under order restrictions that handle continuous, discrete, and mixed covariates. Our two main contributions are as follows. The first is the \emph{plain survival IDR}, a minimal adaptation of IDR that substitutes the Kaplan--Meier estimator \citep{kaplanNonparametricEstimationIncomplete1958}. It is parameter-free, requires only that the censoring be non-informative, and --- under hazard rate ordering of the outcome --- is uniformly consistent at the minimax optimal rate $n^{-\alpha/(1+2\alpha)}$, up to a logarithmic factor, adapting to the unknown smoothness $\alpha$ of the conditional distributions in the covariate. The second is the \emph{Survival-IDR} (S-IDR), which lifts the hazard rate assumption by repairing a violation of the Cauchy mean value property in the Kaplan--Meier estimator. S-IDR is uniformly consistent under only stochastic dominance of the outcome, attains the minimax rate when the smoothness is known with additional bucketing, allows an accelerated algorithmic implementation, and has an equivalent formulation in quantile space.
Both estimators handle covariates and outcomes that are discrete, continuous, or mixed, and perform well empirically in terms of convergence rate and calibration. They may be applied to risk scores of an existing, carefully vetted model to produce a survival distribution that respects those scores through ordered quantiles and survival probabilities. In the organ donation setting, this might reveal, for example, that two patients with similar risk scores have identical short-term survival probabilities and diverge only at later times --- suggesting that characteristics such as match quality or geographical distance should be brought in to break the tie.

To illustrate the proposed model, we provide in \cref{fig:illustration} an example analogous to \citet[Fig.~1]{henziIsotonicDistributionalRegression2021}, using our S-IDR method. Consider a covariate $X$ that is uniformly distributed on $(0,10)$ and, conditional on $X$, let the outcome $Y$ have a gamma distribution, specifically,
\begin{equation}\label{eq:illustration}
    Y \mid X \sim \text{Gamma}\left(\text{shape} = \sqrt{X}, \text{ scale} = \clamp(X, 1, 6)\right),
\end{equation}
where the $\clamp$ function (sometimes referred to as $\operatorname{clip}$ function) is defined as $\clamp(x, l, u) := \min \{ \max \{ x, l\}, u\}$ for $l \le u$.
The conditional distributions satisfy the isotonicity assumption of ordered quantiles and ordered hazard ratios (and even have ordered likelihood ratios). For the conditional distribution $C \mid X$ of the censoring variable $C$, we choose the same distribution as $Y \mid X$ conditional on $C < \infty$, and $\pr(C = \infty) = 1/2$. We let $C$ be independent of $Y$ given $X$. \Cref{fig:illustration} shows that the S-IDR fit closely tracks the true conditional CDFs despite about 25\% censored observations, while the censoring-ignorant IDR estimate is visibly biased upward.

\begin{figure}[h]
    \centering
    \includegraphics[
        width=\textwidth,
        alt={Top panel shows for 5 covariate levels X a smooth ground-truth CDF, the not-corrected-for-censoring IDR estimate, and the corrected-for-censoring S-IDR estimate. Bottom panel shows a scatter plot of censored and observed X / T points, with the center of the estimated distribution highlighted.}
    ]{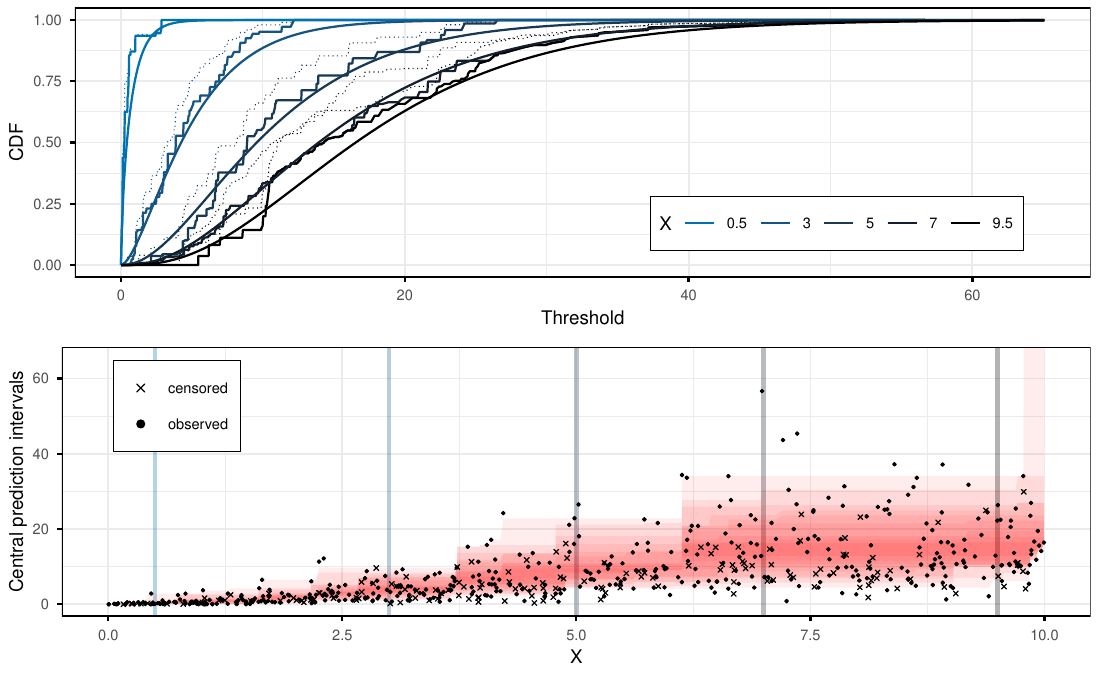}
    \caption{
        Simulation with $n = 600$ partially censored observations from \cref{eq:illustration}. Top: S-IDR sub-CDF estimates (step functions) and the smooth true CDFs they converge to; dotted lines show upward-biased IDR estimates that ignore censoring. Bottom: the (partly censored) data used for S-IDR, with red bands marking the centre of the estimated conditional distribution of $Y$ (more saturated means closer to the centre). Compared to \citet[Fig.~1]{henziIsotonicDistributionalRegression2021}, fit quality is only slightly reduced despite about $25\%$ censored observations.
    }
    \label{fig:illustration}
\end{figure}

The article is organised as follows. \Cref{s:prerequisites} introduces notation and reviews IDR, which \cref{s:sidr_plain} generalises to the survival setting. \Cref{s:sidr} examines the assumptions required for consistency and develops S-IDR, and \cref{s:extensions} extends the construction to multi-dimensional covariates. \Cref{s:empirical_results} then presents simulations in which we compare against existing methods, performance within a distributional index model framework, and a case study on organ donation wait list management. \Cref{s:discussion} discusses open problems. All proofs are deferred to the appendix. Software packages for Rust, R and Python are provided at \url{https://github.com/AlexanderHenzi/isodistrreg}.

\section{Notation and prerequisites}\label{s:prerequisites}
All random variables are defined on a probability space with probability measure $\pr$. Let $X$ be a random covariate taking values in an interval $\mathbf{I} \subset \R$ and $Y \geq 0$ a response variable. Extensions to general partially ordered covariate spaces, similar to \citet{henziIsotonicDistributionalRegression2021}, are discussed in \cref{s:extensions} and \cref{aa:partial_orders}. We want to model the conditional distribution of outcome $Y$ given $X$, so we define the conditional cumulative distribution functions (CDFs)
\[
	F_x(y) := \pr(Y \leq y \mid X = x),
\]
the target of our modelling. Because $Y$ is right-censored by a random variable $C$, called the censoring variable, taking values in $\bar{\R}_+ := \Rp \cup \{ \infty \}$ with $\Rp = [0,\infty)$, instead of $Y$ we observe
\[
    T := \min(Y, C), \qquad \Delta := \ind{Y \leq C}.
\]

Let $(X_i, Y_i, C_i)$, $i = 1, \dots, n$, be independent copies of $(X, Y, C)$, with corresponding $T_i$ and $\Delta_i$. Our estimators use data modelled as realisations of the observables $((X_i, T_i, \Delta_i))_{i=1}^n$.

We require that the censoring is noninformative, which is needed for $F_x$ to be identifiable \citep{ebrahimiIdentifiabilityCensoredData2003,overgaardAssumptionIndependentRight2021}.
\begin{condition}[Noninformative censoring]\label{cond:noninformative_censoring}
    The variables $Y$ and $C$ are conditionally independent given $X$, that is,
    \[
        Y \independent C \mid X.
    \]
\end{condition}
This assumption can fail in practice: for example, if patients leave a longevity study more often as their health worsens, a time-to-death estimate may be biased \citep{jacksonRelaxingIndependentCensoring2014}. Existing work that relaxes it does so only in specific parametric settings \citep{jacksonRelaxingIndependentCensoring2014,sunTestingIndependentCensoring2011}; we adopt the assumption, as is standard in survival analysis.

We now proceed to formalise the monotonicity assumption we impose on the conditional CDFs $F_x$. Two strengths will appear: the weaker stochastic ordering condition stated next, and a stronger hazard-rate variant introduced in \cref{ss:sidr_plain}. We number them \cref{cond:stochastic_order,cond:hazard_rate_order} to make the comparison explicit.
\setcounter{condition}{2} 
\begin{subcondition}[Conditional stochastic ordering of $Y$]\label{cond:stochastic_order}
    Assume that the conditional law of the outcome variable $Y$ given $X = x$ is increasing in stochastic order (the usual stochastic order, also referred to as first-order stochastic dominance) with respect to $x \in \mathbf{I}$, that is
    \[
        \forall x, x' \in \mathbf{I}: \quad x \leq x' \quad \Longrightarrow \quad F_x(y) \geq F_{x'}(y) \quad \forall y \in \R.
    \]
    When $F_x$ is smaller or equal to $F_{x'}$ in the stochastic order, we denote this by $F_x \leso F_{x'}$.\end{subcondition}
Informally, and in the parlance of survival analysis, higher values of the covariate shift the distribution to the right, so by any given time the probability of the event having occurred is no larger and the probability of still surviving is no smaller. \Cref{cond:stochastic_order} is reasonable in many practical settings; for example, at any fixed time since diagnosis, a patient is less likely to survive the further the disease had progressed at the time of diagnosis (the covariate).

The methods proposed in this paper provide an estimate for the family $(F_x)_{x \in \mathbf{I}}$ by computing a collection of conditional CDFs on the grid of observed covariate values $X_1, \ldots, X_n$. We extend to the full interval $\mathbf{I}$ by (e.g., linear) interpolation between the closest observed covariate values. The result is a collection of step-function CDFs, whose quantiles do not cross.

For convenience and without loss of generality, we relabel our data triples $(X_i, T_i, \Delta_i)$ such that $X_1 \leq \dots \leq X_n$. Moreover, we denote the $m \leq n$ unique covariate values of the observed $X_i$ by $\xi_1 < \ldots < \xi_m$. If $1 \leq r \leq s \leq m$, we write $[r:s]$ for the integer interval $r, \ldots, s$ and denote the set of indices of observations with a covariate value in the interval $[\xi_r, \xi_s]$ by $O_{r:s} := \{ i \mid \xi_r \leq X_i \leq \xi_s, 1 \leq i \leq n \}$. We estimate conditional CDFs $\hat{F}_x$ for each $x \in \{\xi_1, \ldots \xi_m\}$, and we denote joint estimates by $\mathbf{\hat{F}} = (\hat{F}_{\xi_1}, \ldots, \hat{F}_{\xi_m})$.

For uncensored data, the IDR estimator $(\hat{F}_x)_{x \in \mathbf{I}}$ is defined as the linear interpolation (in the covariate direction) of the minimiser $\mathbf{\hat{F}}$ of an empirical risk $\ell_S$, defined through a proper scoring rule $S$ \citep{waghmareProperScoringRules2026}:
\[
    \ell_S(\mathbf{F}) := \frac{1}{n} \sum_{i=1}^n S(F_{X_i}, Y_i),
\]
subject to \cref{cond:stochastic_order}, here equivalent to $F_{\xi_r} \leso F_{\xi_s}$ for $1 \leq r \leq s \leq m$. \citet{henziIsotonicDistributionalRegression2021} choose for $S$ the continuous ranked probability score ($\operatorname{CRPS}$), defined as
\[
    \operatorname{CRPS}(F, Y) := \int_\R (F(y) - \ind{Y \leq y})^2 dy.
\]
Minimisation can be performed for each threshold $y \in \R$ separately, which amounts to antitonic regression (decreasing monotonic regression) for the probability $F_x(y)$ with a squared penalty. This lets \citet{henziIsotonicDistributionalRegression2021} arrive at a classic $\min$-$\max$ formula for IDR,
\begin{equation}\label{eq:idr_min_max}
    \hat{F}_{\xi_i}(y) = \min_{1 \leq r \leq i} \, \max_{i \leq s \leq m} \frac{1}{|O_{r:s}|} \sum_{j \in O_{r:s}} \ind{Y_j \leq y}, \qquad y \in \R, \, 1 \le i \le m,
\end{equation}
compare \citep{barlowIsotonicRegressionProblem1972}.
\citet{henziIsotonicDistributionalRegression2021} proceed to demonstrate how a wide class of proper scoring rules $S$ all arrive at the same result. The inner quantities $|O_{r:s}|^{-1} \sum_{j \in O_{r:s}} \ind{Y_j \leq y}$ are the empirical CDFs of the samples $\{ Y_i \mid i \in O_{r:s} \}$. This formulation at \eqref{eq:idr_min_max} is our starting point for deriving the Survival-IDR (S-IDR) estimator.

A \emph{sub-CDF} is a right-continuous increasing function with vanishing left limit and right limit at most $1$. Under right-censoring, the survival-analytic counterpart of the empirical CDF, the Kaplan--Meier estimator (defined in \cref{ss:sidr_plain}), is in general only a sub-CDF rather than a CDF. Accordingly, S-IDR and existing alternatives target tuples of sub-CDFs rather than full CDFs.

We will compare against two related isotonic distributional regression methods. They can both accommodate right-censoring and provide estimates that satisfy the conditional stochastic ordering \cref{cond:stochastic_order}, but require the covariate to be discrete. The first is the estimator from \citet{barmiInferencesStochasticOrdering2005} (hereafter \EBM), defined as
\begin{equation}\label{eq:el_barmi}
    \hat{F}_{\xi_i}(y) = \min_{1 \leq r \leq i} \, \max_{i \leq s \leq m} \frac{1}{|O_{r:s}|} \sum_{j=r}^s |O_{j:j}| \, \KM{j:j}(y), \qquad \forall y \in \Rp, \, 1 \le i \le m,
\end{equation}
the threshold-wise (weighted) isotonic regression of the $m$ Kaplan--Meier estimators $\KM{j:j}$ computed on respectively $\{(T_i, \Delta_i) \mid i \in O_{j:j}\}$, defined in \cref{ss:sidr_plain}. The second is the pointwise nonparametric maximum likelihood estimator by \citet{parkPointwiseNonparametricMaximum2012} (hereafter \PRK). Their profile likelihood approach leads to an estimator that applies a generalised pool-adjacent violators (PAV) algorithm for separable functions: for each $y \in \Rp$, it is defined as the global maximum likelihood solution evaluated at $y$. Both estimators, as well as IDR, coincide when evaluated on \emph{uncensored} observations with a discrete covariate. With right-censored data, the estimators from \EBM and \PRK coincide for those groups that are not merged with any others, in which case the estimate for that group is simply the Kaplan--Meier estimator.

\section{Warm-up: A direct estimator}\label{s:sidr_plain}
\subsection{An IDR-inspired estimator with uniform consistency}\label{ss:sidr_plain}
For IDR, the inner quantities that the min-max at \eqref{eq:idr_min_max} is taken over are the empirical CDFs of the response observations that belong to covariates in a certain range. In this section, we simply replace the empirical CDF (evaluated on the relevant $Y_i$) with the Kaplan--Meier estimator (evaluated on the relevant $(T_i, \Delta_i)$) and investigate the properties and shortcomings of the resulting estimator. The findings motivate the introduction of S-IDR.

For $1 \le r \le s \le m$, $y \in \Rp$, we denote by $\KM{r:s}(y)$ the Kaplan--Meier estimator evaluated on $(T_i, \Delta_i)$, $i \in O_{r:s}$, at threshold $y$, that is,
\begin{equation}\label{eq:kaplan_meier}
    1 - \KM{r:s}(y) = \prod_{t \in \{T_i \mid i \in O_{r:s}, T_i \leq y\}} \left(1 - \frac{\#\{i \in O_{r:s} \mid T_i = t, \Delta_i = 1\}}{\#\{i \in O_{r:s} \mid T_i \geq t\}}\right).
\end{equation}
This definition can be extended to observations with nonnegative weights $w_i$, where instead of counting indices, the sum of the weights enters into \eqref{eq:kaplan_meier}.

\begin{definition}[Plain survival IDR]\label{def:sidr_plain}
    The \emph{plain survival IDR} is defined for $y \in \Rp$ as
    \begin{equation}\label{eq:sidr_plain}
        \hat{F}_{\xi_i}(y) := \min_{1 \leq r \leq i} \, \max_{i \leq s \leq m} \KM{r:s}(y),
    \end{equation}
    at the observed covariate values $\xi_i$, $1 \leq i \leq m$. For $x \in (\xi_1, \xi_m)$, we set $\hat{F}_x(y)$ as the linear interpolation between the nearest observed covariate values, otherwise $\hat{F}_x(y) := \hat{F}_{\clamp(x, \xi_1, \xi_m)}(y)$.
\end{definition}

The estimators $\hat{F}_{\xi_i}$ are indeed conditional sub-CDFs which satisfy the stochastic order assumption. This follows because the Kaplan--Meier estimators are sub-CDFs and because the minimum in the definition of $\hat{F}_{\xi_i}(y)$ is taken over a larger set and the maximum over a smaller set as the index $i$ increases.

Before studying this definition in more detail, we compare it with the \EBM estimator at \eqref{eq:el_barmi}, which weights individual Kaplan--Meier estimators but does not use the Kaplan--Meier estimator over a combined sample of multiple covariate values. As we will see later, the Kaplan--Meier estimator is generally not equal to a weighted average of the same estimator evaluated on sub-samples, so these estimators differ. \EBM is consistent only when $X$ is ordinal (i.e., discrete with finite, ordered support). When all observations are uncensored, both \eqref{eq:el_barmi} and \eqref{eq:sidr_plain} coincide with \eqref{eq:idr_min_max}.

To state the uniform consistency result for plain survival IDR, we need an additional stochastic ordering condition.
\begin{subcondition}[Conditional hazard rate ordering of $Y$]\label{cond:hazard_rate_order}
    The outcome variable $Y$ is \emph{increasing in hazard rate order (HRO)} \citep[Section~1.B]{shakedStochasticOrders2007} with respect to $x \in \mathbf{I}$ if the CDFs $F_x$ admit a Lebesgue density $f_x$ and
    \[
        \forall x, x' \in \mathbf{I}: \quad x \leq x' \Longrightarrow \frac{f_x(y)}{1 - F_x(y)} \geq \frac{f_{x'}(y)}{1 - F_{x'}(y)} \qquad \forall y \in \R.
    \]
    When $F_x$ is smaller or equal to $F_{x'}$ in the hazard rate order, we denote this by $F_x \lehro F_{x'}$.
\end{subcondition}
Hazard rate ordering implies the more common stochastic order in \cref{cond:stochastic_order}, is what we need for the plain survival IDR to be uniformly consistent, and is implied by the likelihood ratio order \citep[Section~1.C]{shakedStochasticOrders2007}.

We impose the following assumptions. We restrict thresholds $y$ to an interval in which there is positive probability of an uncensored observation, avoiding regions where the data is no longer informative about $F_x$.
\begin{condition}[Positive probability of uncensored observation]\label{cond:positive_probability_uncensored}
    There exist $\tau \in \Rp$ and $\eta > 0$ such that for all $x \in \mathbf{I}$,
	\[
		\pr(T \leq \tau \mid X = x) \leq 1 - \eta.
	\]
\end{condition}

The following two conditions correspond to assumptions (A.1) and (A.2) of \citet{moschingMonotoneLeastSquares2020}. The first ensures that $F_x(y)$ is H\"older in the covariate direction $x$ uniformly in $y$, while the second ensures that the covariates are sufficiently dense in $\mathbf{I}$.
\begin{condition}[Regularity of $F_x$]\label{cond:regularity_F}
    There exist constants $\alpha \in (0,1]$ and $C_1 > 0$ such that
    \[
        \sup_{y \leq \tau} |F_u(y) - F_v(y)| \leq C_1 |u-v|^\alpha, \quad u, v \in \mathbf{I}.
    \]
\end{condition}

For the assumption below, let $\rho_n = \log(n)/n$ and let $\lambda$ denote Lebesgue measure.

\begin{condition}[Covariates are dense in $\mathbf{I}$]\label{cond:dense_covariates}
    There exist constants $C_2, C_3 > 0$ such that for arbitrary intervals $\mathbf{I}_n \subset \mathbf{I}$,
    \[
        \frac{|\{i \in \{1, \dots, n\}\colon X_{i} \in \mathbf{I}_n\}|}{n\lambda(\mathbf{I}_n)} \geq C_2 \text{ whenever } \lambda(\mathbf{I}_n) \geq \delta_n = C_3 \rho_n^{1/(2\alpha + 1)}
    \]
    with asymptotic probability one. That is, if $A_n$ denotes the above event, then $\lim_{n \rightarrow \infty} \pr(A_n) = 1$.
\end{condition}

\begin{theorem}[Uniform consistency]\label{th:sidr_plain_consistency}
    Assume non-informative censoring (\cref{cond:noninformative_censoring}), hazard rate ordering of $Y \mid X$ (\cref{cond:hazard_rate_order}), and the regularity \cref{cond:positive_probability_uncensored,cond:regularity_F,cond:dense_covariates}. Let $\hat{F}_x(y)$ be the estimator of \cref{def:sidr_plain}, and set $\mathbf{I}_n = \{x \in \R \colon [x\pm \delta_n] \subseteq \mathbf{I}\}$. Then there exists a constant $C = C(C_1, C_2, C_3, \eta) > 0$ such that
    \[
    	\lim_{n\rightarrow\infty} \pr\Big(\sup_{x \in \mathbf{I}_n, y \leq \tau} |\hat{F}_x(y) - F_x(y)| \geq C\rho_n^{\alpha/(1+2\alpha)}\Big) = 0.
    \]
\end{theorem}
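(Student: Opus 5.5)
Following the strategy of \citet{moschingMonotoneLeastSquares2020} for monotone least squares, we will sandwich the min--max estimator between Kaplan--Meier estimators on windows of width about $\delta_n$. Fix $x \in \mathbf{I}_n$ with $\xi_i \le x$ close to $x$, and let $\KM{r:s}$ have true target $\bar F_{r:s}$. Since $\hat F_{\xi_i}(y) \le \max_{i\le s\le m}\KM{r:s}(y)$ for any fixed $r \le i$, we choose $r$ so that $[\xi_r,\xi_i]$ has length about $\delta_n$. We then aim to show, uniformly in $s \ge i$ and $y \le \tau$,
\[
\KM{r:s}(y) \le F_{\xi_r}(y) + C\rho_n^{\alpha/(1+2\alpha)} \quad \text{w.h.p.}
\]
The symmetric bound $\hat F_{\xi_i}(y) \ge \min_{r\le i}\KM{r:s}(y)$, with $s$ chosen $\delta_n$ to the right, gives the lower bound. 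Regularity (\cref{cond:regularity_F}) turns $F_{\xi_r}$ into $F_x$ at cost $C_1\delta_n^\alpha \asymp \rho_n^{\alpha/(1+2\alpha)}$. Linear interpolation between grid points preserves the bounds.

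The key deterministic step is a population-level ``pooling'' inequality. Let $\bar F_{r:s}$ be the limit of the pooled Kaplan--Meier estimator on the (random-design) sample with covariates in $[\xi_r,\xi_s]$. This is the CDF with cumulative hazard $\int dN/Y$, where $N$ and $Y$ are the mixture sub-distribution and at-risk functions. The pooled hazard equals
\[
\sum_j w_j(t)\,h_{X_j}(t), \qquad w_j(t) \propto (1-F_{X_j}(t^-))(1-G_{X_j}(t^-)),
\]
a convex combination of the individual hazards, with $G$ the censoring CDF; this uses \cref{cond:noninformative_censoring}. Under hazard rate ordering (\cref{cond:hazard_rate_order}), every $h_{X_j}$ with $X_j \ge \xi_r$ is at most $h_{\xi_r}$ pointwise. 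Hence the pooled hazard is at most $h_{\xi_r}$, and so $\bar F_{r:s} \le F_{\xi_r}$, whatever the censoring weights. This is exactly where HRO (rather than \cref{cond:stochastic_order}) is needed. Without it, the censoring-dependent weights can tilt the pooled hazard above every individual one at some $t$, so the pooled target fails the Cauchy mean value property.

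The stochastic step, which we expect to be the main obstacle, is a uniform deviation bound for the pooled Kaplan--Meier estimators. We need
\[
\sup_{y\le\tau}|\KM{r:s}(y) - \bar F_{r:s}(y)| \lesssim \sqrt{\log n / |O_{r:s}|}
\]
simultaneously over all $O(n^2)$ index pairs, where the sample is non-i.i.d.\ given the covariates. We plan to use the Duhamel/product-integral identity to bound the Kaplan--Meier error by a constant times the deviation of the Nelson--Aalen estimator. That constant depends only on $\eta$ through \cref{cond:positive_probability_uncensored}, which keeps the at-risk proportion above $\eta$ on $[0,\tau]$. We then control the Nelson--Aalen deviation via the empirical at-risk and counting processes. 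These are sums of independent, non-identically distributed bounded monotone processes, so DKW/Bernstein-type bounds for independent non-identical summands plus a union bound suffice. The at-risk denominator is handled by the same bound together with $\eta$. On the event $A_n$ of \cref{cond:dense_covariates}, windows of length $\delta_n$ contain at least $C_2 n\delta_n$ points. The error is then $\sqrt{\log n/(n\delta_n)} \asymp \rho_n^{\alpha/(1+2\alpha)}$, balancing the bias term and yielding the constant $C(C_1,C_2,C_3,\eta)$.
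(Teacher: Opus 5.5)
Your argument is correct and has the same skeleton as the paper's proof. It uses the M\"osching--D\"umbgen sandwich with $r$ placed at $x-\delta_n$, and the mirror choice of $s$ at $x+\delta_n$ for the lower bound. Its deterministic ordering step is the paper's: your convex-combination-of-hazards argument is exactly \cref{prop:order}. It also uses a union bound over the $O(n^2)$ index intervals, balancing $\sqrt{\log n/(C_2 n\delta_n)}$ against $C_1\delta_n^\alpha$.

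The genuine difference is the concentration ingredient. The paper obtains its DKW-type bound (\cref{th:dabrowska_adaptation}) by embedding the $k$ laws periodically into a uniform-design, box-kernel conditional Kaplan--Meier problem, whose smoothed targets are exactly $\bar H^1_k$ and $\bar H_k$. It then invokes Dabrowska's exponential inequality. This keeps the proof short and supplies universal constants $d_1,d_2$ with an explicit threshold $k\ge 864/(\varepsilon\eta^5)$.

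You prove the bound directly, and the chain works. Duhamel's identity plus integration by parts reduces the Kaplan--Meier error to the Nelson--Aalen error at a cost of order $e^{1/\eta}$. The integrand $\hat S(s-)/\bar S_{r:s}(s)$ has variation of that order because $\bar\Lambda_{r:s}(\tau)\le 1/\eta$. The Nelson--Aalen error is at most of order $\eta^{-2}$ times the sup-deviations of the empirical counting and at-risk processes, once the empirical at-risk fraction at $\tau$ exceeds $\eta/2$. Those deviations follow from Hoeffding on a monotone grid, or McDiarmid plus symmetrisation; both are valid for independent non-identical summands.

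Your route is self-contained and makes the $\eta$-dependence explicit. It also works conditionally on the design from the start, which is how the centring $\bar F_{r:s}$ in $M_n$ is defined. Your weight formula is already design-conditional, so ``limit on the random-design sample'' should read ``design-conditional mixture''. As a result, you never have to pass through a random-design kernel estimator. Intervals with few points are harmless: for $|O_{r:s}|\lesssim \log n/\eta^2$ the bound is trivial since $|\KM{r:s}-\bar F_{r:s}|\le 1$, and only intervals with $|O_{r:s}|\ge C_2 n\delta_n$ are used.

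One side remark should be corrected, though it plays no role in the proof. Because $\sum_j w_j(t)h_{X_j}(t)$ is a convex combination, the pooled hazard can never lie above (or below) every individual hazard at a given $t$. What fails without \cref{cond:hazard_rate_order} is that the individual hazards are not pointwise ordered. Time-varying censoring weights can then push the integrated hazard $\bar\Lambda_{r:s}(t)$ outside the range of the $\Lambda_{X_j}(t)$. In \cref{fig:example_cmv_violation_population} the pooled hazard always lies between $h_1$ and $h_2$, yet $\bar\Lambda(3.5)=2\log(4/3)<\log 2$, giving $\bar F(3.5)=7/16$.
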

\noindent\emph{Proof.} See \cref{a:plain_survival_idr_consistency}.

The rate $n^{-\alpha/(1+2\alpha)}$ in \cref{th:sidr_plain_consistency} is minimax optimal for distributional regression in the energy- or Wasserstein distance \citep{dombryDistributionalRegressionCRPSerror2024,dombryStonesTheoremDistributional2025}, and the plain survival IDR attains it locally and adaptively: if the conditional CDFs are H\"older continuous with index $\alpha$ on an interval $I \subset \mathbf{I}$ --- with $\alpha$ possibly depending on $I$ --- then the estimator converges at rate $n^{-\alpha/(1+2\alpha)}$ on $I$ up to a logarithmic factor. The result is the censored counterpart of Theorem 3.3 of \citet{moschingMonotoneLeastSquares2020}.

Plain survival IDR at \eqref{eq:sidr_plain}, while requiring the true $(F_x)_x$ to be hazard rate increasing (\cref{cond:hazard_rate_order}) for consistency, does not yield a solution that satisfies this condition in general. That is, the estimator at \eqref{eq:sidr_plain} does not choose an approximation of $(F_x)_x$ within a class that satisfies \cref{cond:hazard_rate_order}.

\begin{remark}\label{remark:sidr_plain_consistency_requirements}
    \Cref{th:sidr_plain_consistency} can be shown under variations of \cref{cond:hazard_rate_order}. Essential is that for $\emptyset \ne [x_l, x_r] = J \subseteq \mathbf{I}_n$, the Kaplan--Meier estimator evaluated on $\{(T_i, \Delta_i) \colon i \in O_{1:n}, X_i \in J \}$ converges to a certain mixture $\bar{F}_J$, described at \eqref{eq:fbar_k} in \cref{a:plain_survival_idr_consistency}, such that $\forall y \in \Rp \colon F_{x_l}(y) \ge \bar{F}_J(y) \ge F_{x_r}(y)$. This is satisfied under \cref{cond:hazard_rate_order}, as shown in \cref{prop:order}, but also when the censoring distribution is independent of the covariate ($\mathcal{L}(C \mid X) = \mathcal{L}(C)$), for example.
\end{remark}

\subsection{Investigating the conditions for consistency}\label{ss:from_plain_to_sidr}
We modified the non-censoring IDR at \eqref{eq:idr_min_max} using the Kaplan--Meier estimator to arrive at a new distributional estimator for the isotonic regression problem under censoring, but needed the condition of \cref{remark:sidr_plain_consistency_requirements} to show that the plain survival IDR is consistent.

The notion we require generalises Cauchy's \emph{internality} \citep{cauchyCoursDanalyseLEcole1821,bazPropertyReductionVariability2025}. It first appeared without a name in the isotonic regression literature \citep[property (e)]{robertsonConsistencyGeneralizedIsotonic1975} and was later named \emph{Cauchy mean value property} by \citet{robertsonAlgorithmsOrderRestricted1980}, whose terminology we adopt.
\begin{definition}\label{def:cmv}
    Let $U$ be any set and $M$ a function on $\bigcup_{d \in \mathbb{N}} U^d$ to some totally ordered set. Then $M$ has the \emph{Cauchy mean value} (CMV) property if for all $d_1, d_2 \in \mathbb{N}$,
    \[
        \forall A \in U^{d_1}, B \in U^{d_2}: \quad \min \{ M(A), M(B) \} \leq M(A \cdot B) \leq \max \{ M(A), M(B) \},
    \]
    where $A \cdot B := (A_1, \ldots, A_{d_1}, B_1, \ldots, B_{d_2}) \in U^{d_1 + d_2}$.
\end{definition}

In short, this definition requires that pooling data always results in a value between those on the individual subsets, no matter the split. This ``pooling betweenness'' property is satisfied by many aggregating functions, like the mean and median, see \citep{robertsonMultipleIsotonicMedian1973}. However, it is not satisfied by the Kaplan--Meier estimator: for some fixed threshold $y$, the estimator evaluated on data $((T_i, \Delta_i))_{i=1}^{n_1+n_2}$ may not lie between the Kaplan--Meier estimates on $((T_i, \Delta_i))_{i=1}^{n_1}$ and $((T_i, \Delta_i))_{i=n_1 + 1}^{n_1+n_2}$. \Cref{fig:example_cmv_violation_sample} shows that the Kaplan--Meier estimator may violate the CMV property by at least $|1/2 - 5/8| = 1/8$.

\begin{figure}[h]
    \centering
    \begin{tikzpicture}[
        alt={Sample of four data points, two for covariate 1 and two for covariate 2, with the threshold 3.5 marked.}
    ]
        \draw[->] (0.5,0) -- (2.5,0);
        \node[right] at (2.5,-0.075) {\(x\)};
        
        \draw (1,0) -- (1,2.25);
        \draw (2,0) -- (2,2.25);
        
        \fill (1,0.5) circle (2pt);
        \fill (2,1.5) circle (2pt);
        \fill (2,2) circle (2pt);
        
        \draw (1-0.1,1-0.1) -- (1+0.1,1+0.1);
        \draw (1-0.1,1+0.1) -- (1+0.1,1-0.1);
        
        \draw[dashed, thin] (0.5,1.75) -- (2.5,1.75);
        
        \node[below] at (1,0) {$1$};
        \node[below] at (2,0) {$2$};
        
        \draw[->, thin] (0.5,0) -- (0.5,2.25);
        \node[above] at (0.5,2.25) {\(y,c\)};
        \node[left] at (0.5,0.5) {$1$};
        \node[left] at (0.5,1) {$2$};
        \node[left] at (0.5,1.5) {$3$};
        \node[left] at (0.5,2) {$4$};
        
        \fill (3,1.375) circle (2pt);
        \node[right] at (3,1.4) {\ observed};
        \draw (3-0.1,0.95-0.1) -- (3+0.1,0.95+0.1);
        \draw (3-0.1,0.95+0.1) -- (3+0.1,0.95-0.1);
        \node[right] at (3,1) {\ censored};
    \end{tikzpicture}
    \hspace{4em}
    \begin{tikzpicture}[
        alt={Triangular matrix showing the along the diagonal that the Kaplan-Meier estimator takes values 0.5 at both covariates and 5/8 on the combined data set.}
    ]
        \draw[thick] (0,0) -- (0,-2);
        \node[left] at (0,-0.5) {$1$};
        \node[left] at (0,-1.5) {$2$};
        \draw[thick] (0,-2) -- (2,-2);
        \node[below] at (0.5,-2) {$1$};
        \node[below] at (1.5,-2) {$2$};
        \draw[thick] (0,-1) -- (2,-1);
        \draw[thick] (0,0) -- (1,0);
        \draw[thick] (1,0) -- (1,-2);
        \draw[thick] (2,-1) -- (2,-2);
        
        \draw[->, thin] (0,0.25) -- (2,0.25);
        \node[above] at (1,0.25) {\(r\)};
        
        \draw[->, thin] (-0.5,0) -- (-0.5,-2);
        \node[left] at (-0.5,-1) {\(s\)};
        
        \large
        \node at (0.5,-0.5) {\(\frac{1}{2}\)};
        \node at (0.5,-1.5) {\(\frac{5}{8}\)};
        \node at (1.5,-1.5) {\(\frac{1}{2}\)};
    \end{tikzpicture}
    
    \caption{The Kaplan--Meier estimator does not have the CMV property. Left panel: Sample of four data points $\{ (X_i, T_i, \Delta_i) \}_{i=1}^4 = \{ (1, 1, 1), (1, 2, 0), (2, 3, 1), (2, 4, 1) \}$. Right panel: Kaplan--Meier estimator $\KM{r:s}$ at thresholds $y \in [3, 4)$.}
    \label{fig:example_cmv_violation_sample}
\end{figure}

The CMV violation of the Kaplan--Meier estimator is not a finite-sample artefact, and any estimator that is to be consistent for $(F_x)_x$ under the usual stochastic order (\cref{cond:stochastic_order}) must therefore address it directly. \Cref{fig:example_cmv_violation_population} shows that the violation persists at the population level: when evaluating the estimator on an ever-growing sample of $(T_i, \Delta_i)$ observations, half coming from $(F_1, G_1)$ and half from $(F_2, G_2)$ as specified there, for $y \in [3, 4)$ the estimator converges to $7/16$ and not to $(F_1(y) + F_2(y))/2 = 1/2$. The violation can be made arbitrarily close to $1$ by modifying $F_1$, $F_2$, $G_1$ and $G_2$, see \cref{a:cmv_violation}; the limit object $\bar{F}$ is described in \cref{a:plain_survival_idr_consistency}.

\begin{figure}[h]
    \centering
    \begin{tikzpicture}[
        alt={Line plot with time-to-event on the horizontal axis cdfs F1, F2, G1, G2 and Kaplan-Meier limit object F bar on the vertical axis, showing that the latter is below F1 and F2 near the interval from 3 to 4.}
    ]
        \begin{axis}[
            width=9cm,
            height=6cm,
            xmin=0, xmax=5,
            ymin=0, ymax=1,
            axis x line=bottom,
            axis y line*=left,
            xlabel={$y, c$},
            ylabel={},
            xtick={0,1,2,3,4},
            ytick={0,1/2,1},
            yticklabels={$0$, $\tfrac12$, $1$},
            legend cell align=left,
            legend pos=north west,
            legend columns=2,
            column sep=8pt
        ]
            \filldraw[
              draw=BrickRed,
              pattern={Lines[angle=-45,distance=2.5pt]},
              pattern color=BrickRed
            ] (axis cs:14/5,2/5) -- (axis cs:3,1/2) -- (axis cs:4,1/2) -- (axis cs:5,1) -- (axis cs:4,7/16) -- (axis cs:3,7/16) -- cycle;

            \addplot+[no marks, very thick, ForestGreen, opacity=0.75]
                coordinates {(0,0) (1,0.5) (4,0.5) (5,1)};
            \addlegendentry{$F_1$}
            \addplot+[no marks, thick, dashed, ForestGreen, opacity=0.5]
                coordinates {(0,0.01) (4,0.01) (5,1)};
            \addlegendentry{$G_1$}
            \addplot+[no marks, very thick, BlueViolet, opacity=0.75]
                coordinates {(0,0) (2,0) (3,0.5) (4,0.5) (5,1)};
            \addlegendentry{$F_2$}
            \addplot+[no marks, thick, dashed, BlueViolet, opacity=0.5]
                coordinates {(0,0) (1,0) (2,0.5) (4,0.5) (5,1)};
            \addlegendentry{$G_2$}
            \addplot+[no marks, ultra thick, dotted, black]
                coordinates {(0,0) (1,0.25) (2,0.25) (3,7/16) (4,7/16) (5,1)};
            \addlegendentry{$\bar{F}$}

            \draw[dashed] (4.1,7/16) -- (4.7,7/16);
            \node[right] at (4.7,7/16) {$\frac{7}{16}$};
        \end{axis}
    \end{tikzpicture}
    \caption{The Kaplan--Meier estimator does not have the CMV property at the population level. With $U[a,b]$ denoting the uniform distribution on $[a, b]$, we display the CDFs of $F_1 = (1/2) U[0,1] + (1/2) U[4,5]$, $F_2 = (1/2) U[2,3] + (1/2) U[4,5]$, $G_1 = U[4,5]$ and $G_2 = (1/2) U[1,2] + (1/2) U[4,5]$. The dotted line is the CDF $\bar{F}$ of the limit of the Kaplan--Meier estimator, when evaluated on samples of $(T_i, \Delta_i)$ observations, half coming from $(F_1, G_1)$ and half from $(F_2, G_2)$. The violation of the CMV property is highlighted in red.}
    \label{fig:example_cmv_violation_population}
\end{figure}

When the existing \EBM and \PRK, discussed at the end of \cref{s:prerequisites}, are applied to an ordered categorical covariate, the CMV property is not relevant, because the Kaplan--Meier estimator is not evaluated on samples from a mixture of distributions. This changes when these methods are applied to a continuous covariate through bucketing (see \cref{ss:simulations,ss:case_study}). The buckets each cover some interval of covariate values $x$, and because $F_x$ will in general change over that interval, the failure of the CMV property can cause deviations in the estimates, potentially breaking consistency.

\section{Survival Isotonic Distributional Regression (S-IDR)}\label{s:sidr}
\subsection{Definition and characterisation}\label{ss:definition_characterisation}
As established in the previous section, modifying IDR by replacing empirical CDFs with Kaplan--Meier estimators has undesirable effects. We would like to resolve the CMV obstruction of \cref{s:sidr_plain}, and will define a new inner quantity $\RKM{r:s}(y)$ of the $\max$-$\min$ equation for S-IDR. It should fulfil the CMV property of \cref{def:cmv}, that is, 
\begin{equation}\label{eq:inclusion_criterion}
    \RKM{r:s}(y) \in \bigcap_{P \in \mathcal{P}_{r:s}} \left[\min_{I \in P} \RKM{I}(y), \, \max_{I \in P} \RKM{I}(y)\right],
\end{equation}
for all $y \ge 0$.
The set $\mathcal{P}_{r:s}$ consists of all interval partitions of $[r:s]$ into at least two non-empty subsets.

Heuristically, S-IDR is a procedure built around the Kaplan--Meier estimator that enforces the CMV property by construction: at every partition scale, the recursion replaces $\KM{r:s}(y)$ by its projection onto the interval guaranteed by \eqref{eq:inclusion_criterion} on its strict sub-partitions. Formally:

\begin{definition}[Survival-IDR]\label{def:sidr}
    For $1 \leq r \leq m$, define $\RKM{r:r}(y) := \KM{r:r}(y)$ and, recursively for $1 \leq r < s \leq m$,
    \begin{equation} \label{eq:sidr_clamp}
        \RKM{r:s}(y) := \clamp\left(\KM{r:s}(y), \max_{P \in \mathcal{P}_{r:s}} \min_{I \in P} \RKM{I}(y), \min_{P \in \mathcal{P}_{r:s}} \max_{I \in P} \RKM{I}(y)\right).
    \end{equation}
    The \emph{Survival-IDR} (S-IDR) is defined as
    \begin{equation}\label{eq:sidr}
        \hat{F}_{\xi_i}(y) := \min_{r \leq i} \, \max_{s \geq i} \,\, \RKM{r:s}(y), \qquad y \in \Rp, \, 1 \le i \le m.
    \end{equation}
\end{definition}

For $x \not \in \{\xi_1, \dots, \xi_m\}$, one can again choose any interpolation that respects the stochastic order assumption. The above definition reduces to plain survival IDR at \eqref{eq:sidr_plain} if the Kaplan--Meier estimators respect the CMV property for the specific sample. For example, when there are no censored observations, S-IDR at \eqref{eq:sidr} is equal to IDR at \eqref{eq:idr_min_max}. We refer to $\RKM{}{}$ as the \emph{self-consistent Kaplan--Meier estimator} because, by construction, its value on $[r:s]$ is consistent with its values on every sub-partition of $[r:s]$, in the sense of \eqref{eq:inclusion_criterion}. The S-IDR definition can easily accommodate observation weights by extending \eqref{eq:kaplan_meier}, and indeed this functionality is included in the provided software packages.

\Cref{fig:plain_vs_self_consistent} illustrates the difference between plain survival IDR and S-IDR. With the distributions of \cref{fig:example_cmv_violation_population}, we simulate $n/2 = \num{25000}$ observations $(Y, C) \sim (F_1, G_1)$ and $X \sim U(0.5, 1.5)$ and the same number with $(F_2, G_2)$ and $X \sim U(1.5, 2.5)$. We compare the plain survival IDR and the S-IDR fit at threshold $y=3.5$. The latter takes few distinct values, as is common in isotonic regression. This is not the case for plain survival IDR, where just after $x=1.5$, the value smoothly changes and then approaches $\bar{F}(3.5) = 7/16$. The plain survival IDR is inconsistent for covariate $x = 2$ and threshold $y = 3.5$ since it pools the two groups with the Kaplan--Meier estimator. S-IDR stays close to the true value 1/2 on the interior of the entire covariate domain.

\begin{figure}[h]
    \begin{tikzpicture}[
        alt={Scatter plot with observed and censored points plotted for the covariate horizontally and response vertically, with a horizontal line marking the threshold 3.5.}
    ]
        \begin{axis}[
            width=0.49\linewidth,
            height=0.49*0.7142857142857143\linewidth,
            grid=major,
            grid style={dotted},
            xlabel={$x$},
            xtick={1,2},
            ylabel={$y, c$},
            ytick={0,1,2,3,4,5},
            table/col sep=comma,
            scatter/classes={
                TRUE={mark=*, mark size=1pt, draw opacity=0, fill opacity=0.5}, 
                FALSE={mark=x, mark size=2pt, draw opacity=0.5} 
            },
            scatter, only marks,
            scatter src=explicit symbolic
        ]
            \addplot+[
                mark options={draw=ForestGreen, fill=ForestGreen},
                x filter/.code={
                    \ifdim\pgfmathresult pt>1.5pt
                        \def\pgfmathresult{}
                    \fi
                },
            ] table[header=true, x=x, y=t, meta=d] {e2-consistency-data.csv};

            \addplot+[
                mark options={draw=BlueViolet, fill=BlueViolet},
                x filter/.code={
                    \ifdim\pgfmathresult pt<1.5pt
                        \def\pgfmathresult{}
                    \fi
                },
            ] table[header=true, x=x, y=t, meta=d] {e2-consistency-data.csv};

            \draw[thin, dashed, gray] (0.4,3.5) -- (2.6,3.5);
            \node[below] at (1,3.5) {$y = 3.5$};
        \end{axis}
    \end{tikzpicture}
    \hfill
    \begin{tikzpicture}[
        alt={Line plot with covariate on the horizontal axis and the S-IDR and plain survival IDR estimates on the vertical axis. They coincide on the left half of the plot, on the right half of the plot S-IDR stays close to 1/2 while the plain survival IDR smoothly converges to 7/16.}
    ]
        \begin{axis}[
                width=0.49\linewidth,
                height=0.49*0.7142857142857143\linewidth,
                ymin=3/8, ymax=5/8,
                axis x line*=bottom,
                axis y line*=left,
                xlabel={$x$},
                xtick={1,2},
                ylabel={$\hat{F}_x(3.5)$},
                ytick={3/8,1/2,5/8},
                yticklabels={$\tfrac38$, $\tfrac12$, $\tfrac58$},
                grid=major,
                grid style={dotted},
                table/col sep=comma,
                legend cell align=left
            ]

            \addplot[
              black, mark=none
            ] table[
              x=x, y=recursive,
              col sep=comma
            ] {e2-consistency-fits-subsampled.csv};
            \addlegendentry{S-IDR}
            \addplot[
              gray, ultra thick, draw opacity=0.3, mark=none
            ] table[
              x=x, y=nonrecursive,
              col sep=comma
            ] {e2-consistency-fits-subsampled.csv};
            \addlegendentry{plain survival IDR}
            \draw[dotted] (0.5-0.1,1/2) -- (2.5+0.1,1/2);
            \node[above] at (2,1/2) {$F_1(3.5)=F_2(3.5) = \frac{1}{2}$};
            \draw[dashed] (0.5-0.1,7/16) -- (2.5+0.1,7/16);
            \node[below] at (1,7/16) {$\bar{F}(3.5) = \frac{7}{16}$};
        \end{axis}
    \end{tikzpicture}

    \caption{
        Differences between plain survival IDR and S-IDR. Left panel: Subsample of $\num{2500}$ of the $\num{50000}$ simulated data points according to the distributions from \cref{fig:example_cmv_violation_population} and uniform random covariates. Right panel: Plain survival IDR (grey) and S-IDR (black) fit at threshold $y=3.5$. The dotted horizontal line marks the true value $F_1(3.5)=F_2(3.5)=1/2$; the dashed horizontal line indicates the (incorrect) value of the pooled Kaplan--Meier estimator $\bar{F}(3.5)=7/16$.
    }
    \label{fig:plain_vs_self_consistent}
\end{figure}

\begin{proposition}[S-IDR is well-defined]\label{prop:sidr_well_defined}
    The S-IDR clamping step at \eqref{eq:sidr_clamp} is well-defined because the lower bound is not above the upper bound, that is,
    \begin{equation}\label{eq:max_min_inequality}
        \max_{P \in \mathcal{P}_{r:s}} \min_{I \in P} \RKM{I}(y) \leq \min_{P \in \mathcal{P}_{r:s}} \max_{I \in P} \RKM{I}(y), \quad y \in \Rp, \, 1 \leq r \leq s \leq m.
    \end{equation}
    Consequently, S-IDR is well-defined.
\end{proposition}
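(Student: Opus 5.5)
Fix $y \in \Rp$ throughout and suppress it from the notation, writing $L_{r:s} := \max_{P \in \mathcal{P}_{r:s}} \min_{I \in P} \RKM{I}$ and $U_{r:s} := \min_{P \in \mathcal{P}_{r:s}} \max_{I \in P} \RKM{I}$. The proof goes by strong induction on the length $s-r$ of the integer interval. The induction hypothesis, for all intervals shorter than $[r:s]$, is twofold: $\RKM{I}$ is well-defined, and it satisfies the CMV-type inclusion \eqref{eq:inclusion_criterion}. In other words, for every $I=[a:b]$ with $b-a<s-r$ and every $Q \in \mathcal{P}_{a:b}$,
\[
\min_{J\in Q}\RKM{J} \le \RKM{I} \le \max_{J\in Q}\RKM{J}.
\]
For $s = r$ there is nothing to show. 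Once $L_{r:s}\le U_{r:s}$ is established, the clamp lands in $[L_{r:s},U_{r:s}]$, so the inclusion holds for $[r:s]$ as well. This closes the induction and gives well-definedness of \eqref{eq:sidr} at once, since that is a finite min-max of well-defined quantities.

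The core step is to show that for any two partitions $P, P' \in \mathcal{P}_{r:s}$ we have $\min_{I\in P}\RKM{I} \le \max_{I'\in P'}\RKM{I'}$. Taking the maximum over $P$ and the minimum over $P'$ then yields \eqref{eq:max_min_inequality}. The plan is to pass to the common refinement $R$ of $P$ and $P'$, whose blocks are the nonempty intersections $I\cap I'$. These are again integer intervals, and each block of $P$ (resp.\ $P'$) is partitioned by the blocks of $R$ that it contains.

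\begin{itemize}
\item Every block $I \in P$ is a proper subinterval of $[r:s]$, since $P$ has at least two blocks. So the induction hypothesis applies to $I$ with its partition induced by $R$, giving $\RKM{I} \ge \min_{J\in R,\,J\subseteq I}\RKM{J}$; when $I$ is itself a block of $R$ this is trivial. Hence $\min_{I\in P}\RKM{I} \ge \min_{J\in R}\RKM{J}$.
\item That bound points the wrong way for our purpose, so we argue more carefully. Let $J^*\in R$ attain $\max_{J\in R}\RKM{J}$, and let $I'\in P'$ be the block containing $J^*$. The upper inequality of the hypothesis gives $\RKM{I'} \le \max_{J\subseteq I'}\RKM{J}$. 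This is not the direction we need either.
\item The correct route is to choose a single block of $R$ as a pivot. Let $J_0\in R$ be any block, contained in $I_0\in P$ and in $I'_0\in P'$. The induction hypothesis applied to $I_0$ and to $I'_0$ with their $R$-induced partitions does not directly compare $\RKM{I_0}$ with $\RKM{I'_0}$.
\end{itemize}

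This comparison between $\RKM{I_0}$ and $\RKM{I'_0}$ is the main obstacle. To handle it, I would instead pick $J^\dagger \in R$ minimising $\RKM{J}$ over $R$, and prove the chain
\[
\min_{I \in P}\RKM{I} \le \RKM{I_\dagger} \le \ldots \le \max_{I' \in P'}\RKM{I'}.
\]
This would be done through the overlap structure: adjacent blocks of $P$ and $P'$ interleave along the line. I expect a cleaner argument via the following lemma, proved within the same induction. For any interval $I\subseteq[r:s]$ of length less than $s-r$ and any partition of $I$, $\RKM{I}$ lies between the extreme values of its parts, and consequently between the extreme values of any finer partition.

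With this lemma, take $P'' := R$. The inequality $\min_P \le \max_P$ holds within $P$ itself, and we need to bridge from $P$ to $P'$. The bridge is provided by the partition of $[r:s]$ into $I_1$ (the first block of $P$) and the rest, compared with $I'_1$ (the first block of $P'$) and the rest. Since one of $I_1 \subseteq I'_1$ or $I'_1 \subseteq I_1$ holds, the lemma applied to the larger of the two, partitioned into the smaller one and its complement, lets us chain the comparisons.

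I expect this ordering-of-blocks argument, with both partitions aligned at the left endpoint, to require the most care. It will be carried out by a secondary induction on the number of blocks of $P$ and $P'$.
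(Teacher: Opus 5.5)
\textbf{There is a genuine gap: the core inequality is never proved.} Your induction set-up is right and matches the paper's: strong induction on the interval length, with the CMV inclusion for shorter intervals as the hypothesis. But the proposal never establishes $\min_{I\in P}\RKM{I}\le\max_{I'\in P'}\RKM{I'}$ for two partitions $P,P'\in\mathcal{P}_{r:s}$, and that inequality is the whole content of the proposition.
\begin{itemize}
\item The three bullets are dead ends, as you acknowledge yourself.
\item With the common refinement $R$, the hypothesis only places $\min_{P}$, $\max_{P}$, $\min_{P'}$ and $\max_{P'}$ inside $\bigl[\min_{J\in R}\RKM{J},\max_{J\in R}\RKM{J}\bigr]$. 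That can never compare $\min_{I\in P}\RKM{I}$ with $\max_{I'\in P'}\RKM{I'}$.
\item The chain through $J^\dagger$ is left as ``$\ldots$''.
\item Your last paragraph points the right way, but it stops exactly where the argument has content (``lets us chain the comparisons''). A single application of the hypothesis to the larger of the two first blocks is not enough.
\end{itemize}

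\textbf{Step 1: reduce to two-block partitions.} Write $I_1=[r:k]$ and $I'_1=[r:l]$ for the first blocks of $P$ and $P'$. The remaining blocks of $P$ partition the proper subinterval $[k+1:s]$. By the hypothesis, $\RKM{k+1:s}$ is at least their minimum, so replacing $P$ by $\{[r:k],[k+1:s]\}$ can only increase the minimum. Symmetrically, replacing $P'$ by $\{[r:l],[l+1:s]\}$ can only decrease the maximum. The paper does this by repeated adjacent merges; one merge suffices, and either way no further induction is needed.

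\textbf{Step 2: two applications of the hypothesis, pivoting on the middle block.} Take $k<l$; the case $k>l$ is analogous and $k=l$ is trivial. Apply the hypothesis to both overlapping intervals, $[r:l]=[r:k]\cup[k+1:l]$ and $[k+1:s]=[k+1:l]\cup[l+1:s]$, using the shared block $[k+1:l]$. Suppose, suppressing $y$,
\[
\min\{\RKM{r:k},\RKM{k+1:s}\}>\max\{\RKM{r:l},\RKM{l+1:s}\}.
\]
Then $\RKM{r:l}<\RKM{r:k}$ forces $\RKM{k+1:l}\le\RKM{r:l}$. Likewise $\RKM{k+1:s}>\RKM{l+1:s}$ forces $\RKM{k+1:s}\le\RKM{k+1:l}$. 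Hence
\[
\RKM{k+1:s}\le\RKM{k+1:l}\le\RKM{r:l},
\]
which contradicts the assumed violation.

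This is the paper's argument. The interval structure enters precisely through the middle block $[k+1:l]$, which is why your refinement-based attempts could not close the gap.
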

\noindent\emph{Proof.} See \cref{a:sidr_well_defined}.

This inequality corresponds to the desired inclusion at \eqref{eq:inclusion_criterion}, because the left endpoint of the interval is the maximum of all left endpoints and analogously for the right endpoint. While equation \eqref{eq:max_min_inequality} is reminiscent of the max--min inequality, the latter does not directly apply in this case.

As in the uncensored case, the estimator can be equivalently defined via a max-min formula, and, at every threshold $y \in \Rp$, it creates a partition of the data into groups on which the estimator is constant; these are consequences of the CMV property \citep[Theorem 1a, 3a]{qianMinimumLowerSets1992}. The plain survival IDR generally does not satisfy these properties, since the Kaplan--Meier estimator can violate the CMV property. Furthermore, analogously to the IDR estimator, the S-IDR estimator can be equivalently formulated in terms of min-max or max-min formulas for the quantile function.

\begin{proposition}[Characterising S-IDR]\label{prop:sidr_characteristics}
\begin{itemize}
    \item[]
    \item[(i)] For $y \in \Rp$, S-IDR satisfies
    \[
    \hat{F}_{\xi_i}(y) = \min_{r \leq i} \max_{s \geq i} \RKM{r:s}(y) = \max_{s \geq i} \min_{r \leq i} \RKM{r:s}(y), \quad 1 \le i \le m.
    \]
    \item[(ii)] For $y \in \Rp$, there exists a partition $0 =: r_0 < 1 \leq r_1 < r_2 \dots < r_K = m$ such that 
    \[
        \hat{F}_{\xi_i}(y) = \RKM{(r_{j-1}+1):r_j}(y), \, \ r_{j-1} < i \leq r_j, \quad j = 1, \dots, K.
    \]
    \item[(iii)] For $\alpha \in [0, 1]$, let $\hat{q}_{r:s}(\alpha) = \KM{r:s}^{-1}(\alpha) = \inf\{z\colon \KM{r:s}(z) \geq \alpha\}$, with $\hat{q}_{r:s}(\alpha) = \infty$ for $\alpha > \lim_{z \rightarrow \infty} \KM{r:s}(z)$, and define $\tilde{q}_{r:r}(\alpha) = \KM{r:r}^{-1}(\alpha)$,
    \[
    \tilde{q}_{r:s}(\alpha) := \clamp\left(\hat{q}_{r:s}(\alpha), \max_{P \in \mathcal{P}_{r:s}} \min_{I \in P} \tilde{q}_I(\alpha), \min_{P \in \mathcal{P}_{r:s}} \max_{I \in P} \tilde{q}_I(\alpha)\right), \ 1 \leq r < s \leq m.
\]
Then, for $i = 1, \dots, m$ and $\alpha \in (0,1)$,
\[
    \hat{F}_{\xi_i}^{-1}(\alpha) = \min_{r \leq i} \max_{s \geq i} \tilde{q}_{r:s}(\alpha) = \max_{s \geq i} \min_{r \leq i} \tilde{q}_{r:s}(\alpha).
\]
\end{itemize}
\end{proposition}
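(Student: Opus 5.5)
Fix a threshold $y \in \Rp$ and regard $\RKM{r:s}(y)$ as a function $M$ on integer intervals $[r:s] \subseteq [1:m]$. The whole argument rests on one consequence of the construction and of \cref{prop:sidr_well_defined}: $M$ has the CMV property for interval splits. Concretely, for $r \le t < s$ we have
\[
\min\{M(r:t), M(t{+}1:s)\} \le M(r:s) \le \max\{M(r:t), M(t{+}1:s)\}.
\]
This holds because the two-block partition $\{[r:t],[t{+}1:s]\}$ lies in $\mathcal{P}_{r:s}$. The clamp in \eqref{eq:sidr_clamp} therefore forces $M(r:s)$ above that partition's minimum and below its maximum. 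The clamp is legitimate since its lower bound does not exceed its upper bound, by \cref{prop:sidr_well_defined}. Induction over the number of blocks extends the bound to any interval partition.

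With this in hand, I would invoke the general theory of \citet{qianMinimumLowerSets1992}, specifically Theorems 1a and 3a. For functions on intervals of a totally ordered index set with the CMV property, these give the equality of min--max and max--min and the existence of a level-set partition. To keep the argument self-contained, I would also reprove the two parts directly.

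For (i), set $A_i = \min_{r\le i}\max_{s\ge i} M(r:s)$ and $B_i = \max_{s\ge i}\min_{r\le i} M(r:s)$. The inequality $B_i \le A_i$ is the ordinary max--min inequality. For the reverse, I would take $r^*$ attaining the minimum in $A_i$ (choosing the smallest such $r^*$) and $s^*$ attaining the maximum in $B_i$. I would then show $M(r^*:s^*) \le B_i$ and $M(r^*:s^*) \ge A_i$ by splitting intervals at $r^*$ and $s^*$ and applying CMV, following the classical proof of Robertson--Wright for isotonic regression. In that proof only betweenness under two-piece splits is used, so it carries over verbatim.

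For (ii), I would construct the partition greedily, as in the PAVA/minimum-lower-sets argument:
\[
r_1 = \max\Bigl\{ s : M(1:s) = \min_{s'} M(1:s') \Bigr\}.
\]
Using CMV, I would show $\hat F_{\xi_i}(y) = M(1:r_1)$ for all $i \le r_1$, and then recurse on $[r_1{+}1:m]$. The key intermediate claim is that the block values increase and that each block minimises the averages started at its left endpoint; both are standard once CMV holds. This step is bookkeeping and is the main obstacle, since it has to be checked carefully that only interval (not arbitrary) partitions are ever needed.

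For (iii), I would note that $\tilde q$ is defined by the same clamping recursion as $\RKM{}$, applied to the quantile functions of the $\KM{}$. The equivalence $\hat{q}_{r:s}(\alpha) \le z \iff \KM{r:s}(z) \ge \alpha$ (Galois connection) commutes with min, max and clamp, with the order reversed appropriately. By induction on $s-r$ this should give
\[
\tilde q_{r:s}(\alpha) \le z \iff \RKM{r:s}(z) \ge \alpha.
\]
Combined with (i) and the same commutation applied to the outer min--max, this yields $\hat F^{-1}_{\xi_i}(\alpha) = \min_{r\le i}\max_{s\ge i} \tilde q_{r:s}(\alpha)$. The max--min form then follows because $\tilde q$ again has the CMV property, so (i) applies to it.

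The delicate points are right-continuity and strict versus non-strict inequalities at the clamp endpoints. I would handle them by checking the equivalence separately in the three clamp cases.
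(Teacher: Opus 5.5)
\textbf{Parts (i) and (ii).} Here you do what the paper does. The clamp in \eqref{eq:sidr_clamp} gives CMV of $I\mapsto\RKM{I}(y)$ on index intervals directly, since it already ranges over all of $\mathcal{P}_{r:s}$, so no induction on the number of blocks is needed. The results of \citet{qianMinimumLowerSets1992} then do the rest.

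Your self-contained greedy argument for (ii) has the orientation reversed. The fit is \emph{decreasing} in $i$ and $\hat{F}_{\xi_1}(y)=\max_{s}\RKM{1:s}(y)$. So the first block must be given by the largest \emph{maximiser} of $s\mapsto\RKM{1:s}(y)$, and block values decrease. Take $m=2$, one uncensored observation $T=1$ at $\xi_1$, one $T=2$ at $\xi_2$, and $y=1$. Your rule with $\min$ picks $r_1=2$ and claims $\hat{F}_{\xi_1}(1)=1/2$, whereas $\hat{F}_{\xi_1}(1)=1$.

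\textbf{Core of (iii).} For the identity $\tilde q_{r:s}(\alpha)=\RKM{r:s}^{-1}(\alpha)$, your route differs from the paper's and is cleaner. Write $L,U$ for the lower and upper clamp bounds in \eqref{eq:sidr_clamp} as functions of $z$. By induction these are nondecreasing and right-continuous, with $L\le U$ by \cref{prop:sidr_well_defined}, so $\RKM{r:s}=\max\{L,\min\{\KM{r:s},U\}\}$. For finitely many nondecreasing right-continuous $G_j$ one has exactly $(\max_j G_j)^{-1}(\alpha)=\min_j G_j^{-1}(\alpha)$ and $(\min_j G_j)^{-1}(\alpha)=\max_j G_j^{-1}(\alpha)$, with the value $+\infty$ included. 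Induction then gives
\[
\RKM{r:s}^{-1}(\alpha)=\clamp\bigl(\hat q_{r:s}(\alpha),\,U^{-1}(\alpha),\,L^{-1}(\alpha)\bigr),\qquad U^{-1}(\alpha)=\max_{P\in\mathcal{P}_{r:s}}\min_{I\in P}\tilde q_I(\alpha),\quad L^{-1}(\alpha)=\min_{P\in\mathcal{P}_{r:s}}\max_{I\in P}\tilde q_I(\alpha),
\]
which is $\tilde q_{r:s}(\alpha)$. This proves \cref{prop:quantile_perspective} in a few lines. It avoids the paper's three-case analysis and the truncation at $K$, and needs no case checks at the clamp endpoints.

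\textbf{The gap: your final step.} The anti-homomorphism exchanges $\min$ and $\max$ but not the roles of $r$ and $s$. So the two formulas in (i) give
\[
\hat{F}_{\xi_i}^{-1}(\alpha)=\max_{r\le i}\min_{s\ge i}\tilde q_{r:s}(\alpha)=\min_{s\ge i}\max_{r\le i}\tilde q_{r:s}(\alpha).
\]
This is the isotonic form, as it must be since quantiles increase in $i$. It is not $\min_{r\le i}\max_{s\ge i}\tilde q_{r:s}(\alpha)$ as you claim.

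The formula printed in (iii) is in fact false. In the two-point example above with $\alpha=1/2$, we have $\tilde q_{1:1}(1/2)=\tilde q_{1:2}(1/2)=1$ and $\tilde q_{2:2}(1/2)=2$. Also $\hat F_{\xi_2}=\KM{2:2}$, so $\hat F_{\xi_2}^{-1}(1/2)=2$. By contrast, $\min_{r\le 2}\max_{s\ge 2}\tilde q_{r:s}(1/2)=\max_{s\ge 2}\min_{r\le 2}\tilde q_{r:s}(1/2)=\min\{1,2\}=1$.

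Your appeal to CMV of $\tilde q$ cannot repair this. CMV does give $\min_{r\le i}\max_{s\ge i}\tilde q_{r:s}=\max_{s\ge i}\min_{r\le i}\tilde q_{r:s}$, but that common value is the antitonic fit of the $\tilde q_{r:s}$, not $\hat F_{\xi_i}^{-1}$. So no argument can reach (iii) as printed. The statement should be corrected to the display above. Your argument proves that corrected form directly from (i), without using CMV of $\tilde q$. Any valid version of the paper's deferred final step must land on the same form.
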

\begin{proof}
    Because for $y \in \Rp$, the function $I \mapsto \RKM{I}(y)$ defined on index intervals fulfils the CMV property, Properties \emph{(i)} and \emph{(ii)} follow from Theorems 3 and 1a of \citet{qianMinimumLowerSets1992} respectively. For the proof of property \emph{(iii)}, see \cref{aa:quantile_perspective}.
\end{proof}

If $I$ is some index interval, there always exists some index sub-interval $J \subseteq I$ such that $\RKM{I}(y) = \KM{J}(y)$.  We may have $J$ as small as $|J| = 1$ even as $I$ is arbitrarily large. This raises the question of whether the estimator's performance is determined by the sample size in the interval that realises the estimator, but simulations suggest that this is not the case. When evaluating the self-consistent Kaplan--Meier estimator $\RKM{}{}$ and the usual Kaplan--Meier estimator $\KM{}$ on i.i.d. pairs of observations $(T, \Delta)$, we observe that $\RKM{}(y)$ and $\KM{}(y)$ converge at the same rate and are close already for moderate $n$. However, the probability of the two estimators being equal seems to approach 0 as $n \to \infty$.

\subsection{Uniform consistency}\label{ss:uniform_consistency}
To show consistency of S-IDR with the recursive estimator, we restrict the index intervals of the procedure to contain at least a certain number $\lceil c_n \rceil \in \amsmathbb{N}$ of observations. This makes analysis more tractable, but we believe that the consistency result holds for the unmodified S-IDR as well; the simulations of \cref{ss:simulations} provide supporting evidence. We denote this modified S-IDR by $\hat{F}_{x, \mathcal{I}(c_n)}$, defined precisely in \cref{aa:sidr_modified_consistency}.

\begin{condition}[Regularity of $G_x$]\label{cond:regularity_G}
    The censoring distributions $(G_u)_{u \in \mathbf{I}}$ satisfy \cref{cond:regularity_F} with the same constants $\alpha$ and $C_1$.
\end{condition}

The equality of the constants for $G_u$ and $F_u$ in the above condition is merely for simplification. If the $G_u$ are H\"older continuous with a different constant $\beta \neq \alpha$, then the rate of the estimator becomes $\rho_n^{\min(\alpha,\beta)/(1+2\min(\alpha,\beta))}$.

\begin{condition}[Support of $X$]\label{cond:support_X}
    The covariate $X$ satisfies $\amsmathbb{P}(X \in \mathbf{I}) = 1$.
\end{condition}

\Cref{cond:support_X} means that the assumptions on smoothness of the outcome and censoring CDFs with respect to the covariate (\cref{cond:regularity_F,cond:regularity_G}) and the assumption that covariates are dense (\cref{cond:dense_covariates}) hold globally on the support of $X$, and not only on a local interval as for \cref{th:sidr_plain_consistency}.

\begin{theorem}[Uniform consistency]\label{th:sidr_modified_consistency}
    Assume that \cref{cond:noninformative_censoring}, \cref{cond:stochastic_order}, and \cref{cond:positive_probability_uncensored,cond:regularity_F,cond:dense_covariates,cond:regularity_G,cond:support_X} hold. If $c_n = kC_2C_3n \rho_n^{1/(1+2\alpha)}$ for some $k \in (0,1]$, there exists a constant $\tilde{C} = \tilde{C}(C_1, C_2, C_3, \eta, k) > 0$ such that
    \[
        \lim_{n\rightarrow\infty} \pr\Big(\sup_{x \in \tilde{\mathbf{I}}_n, y \leq \tau} |\hat{F}_{x, \mathcal{I}(c_n)}(y) - F_x(y)| \geq \tilde{C}\rho_n^{\alpha/(1+2\alpha)}\Big) = 0,
    \]
    where $\tilde{\mathbf{I}}_n = \{x \in \R \colon [x\pm 4\delta_n] \subseteq \mathbf{I}\}$.
\end{theorem}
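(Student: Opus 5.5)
The proof follows the sandwich argument behind \cref{th:sidr_plain_consistency} and Theorem~3.3 of \citet{moschingMonotoneLeastSquares2020}. The only change is that the inner quantities $\KM{r:s}$ are replaced by the self-consistent $\RKM{r:s}$, and the hazard-rate ordering must be traded for the CMV structure built into \eqref{eq:sidr_clamp}. Fix $x\in\tilde{\mathbf{I}}_n$ and $y\le\tau$. By the min-max representation of \cref{prop:sidr_characteristics}(i), restricted to index intervals with at least $\lceil c_n\rceil$ observations, it suffices to show two bounds with high probability, uniformly in $x$ and $y$:
\[
\hat F_{x,\mathcal I(c_n)}(y)\le \max_{s\ge i}\RKM{r^*:s}(y)\le F_x(y)+\tilde C\rho_n^{\alpha/(1+2\alpha)},
\]
where $r^*$ is the left end of a block $[x-\delta',x]$ of width of order $\delta_n$, together with the symmetric lower bound using a right block $[x,x+\delta']$. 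By monotonicity it is enough to control every $\RKM{I}(y)$ with $I$ an index interval of at least $c_n$ points whose covariates lie to the right of $x-\delta'$, and likewise on the left.

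\emph{Step 1 (local KM accuracy).} For each index interval $J$ with covariate range of length at most a constant times $\delta_n$ and at least $c_n$ points, I show $\sup_{y\le\tau}|\KM{J}(y)-F_{x_J}(y)|\lesssim\rho_n^{\alpha/(1+2\alpha)}$, where $x_J$ is any point of the range. This works because the observations in $J$ have $(F_u,G_u)$ within $C_1\delta_n^\alpha$ of $(F_{x_J},G_{x_J})$ by \cref{cond:regularity_F,cond:regularity_G}. The pooled KM limit $\bar F_J$ is then a Lipschitz functional of the sub-distribution pair on $[0,\tau]$, and the denominator is bounded below via \cref{cond:positive_probability_uncensored}; this gives $|\bar F_J-F_{x_J}|\lesssim\delta_n^\alpha$. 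The stochastic part $|\KM J-\bar F_J|$ is handled by a DKW/Bernstein-type bound for independent, non-identically distributed KM, combined with a union bound over the $O(n^2)$ intervals, giving $\sqrt{\log n/c_n}\asymp\rho_n^{\alpha/(1+2\alpha)}$. The choice of $c_n$ together with \cref{cond:dense_covariates,cond:support_X} guarantees that every covariate window of length about $k\delta_n$ contains at least $c_n$ points, so such $J$ exist everywhere.

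\emph{Step 2 (transfer to long intervals via the clamp).} This is the main obstacle. For a long interval $I$ the raw $\KM I$ may be badly biased, as in \cref{fig:example_cmv_violation_population}, so it cannot be used directly. However, the clamp in \eqref{eq:sidr_clamp} (restricted to admissible sub-intervals) forces
\[
\min_{J\in P}\RKM J(y)\le\RKM I(y)\le\max_{J\in P}\RKM J(y)
\]
for any partition $P$ of $I$ into admissible blocks. I would partition $I$ into consecutive short blocks $J_1,\dots,J_L$, each of width between about $k\delta_n$ and $2k\delta_n$ and containing at least $c_n$ points; this is possible under \cref{cond:dense_covariates}, and it explains the enlarged margin $4\delta_n$ in $\tilde{\mathbf I}_n$. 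The upper bound on $\RKM I(y)$ then reduces to $\max_\ell\RKM{J_\ell}(y)$.

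\emph{Step 3 (short blocks are accurate).} For a short block $J$ I argue by induction on length that $\RKM J(y)$ lies within the error band of $F_{x_J}$. Its admissible sub-partitions consist of blocks of width at most that of $J$, whose values are accurate by induction, and by Hölder continuity they lie in $[F_{\text{right}}-\epsilon_n,F_{\text{left}}+\epsilon_n]$. Since $\KM J$ is accurate by Step~1, clamping it into that interval keeps it accurate. Combining with stochastic ordering (\cref{cond:stochastic_order}), every block $J_\ell$ lying to the right of $x-\delta'$ satisfies $\RKM{J_\ell}(y)\le F_{x-\delta'}(y)+\epsilon_n\le F_x(y)+C_1\delta_n^\alpha+\epsilon_n$. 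The lower bound is symmetric. Collecting constants yields $\tilde C$, and the probability bound follows from the union bound in Step~1 together with $\pr(A_n)\to1$.
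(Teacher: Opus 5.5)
Your proposal is correct and is essentially the paper's proof. The clamp (CMV) inequality bounds $\RKM{I}(y)$, for $I$ starting at a grid point in $[x-4\delta_n,x]$, by a maximum over short admissible sub-blocks; each sub-block is handled by a non-i.i.d.\ DKW bound centred at the pooled Kaplan--Meier limit (with a union bound over intervals) and the Lipschitz dependence of that limit on $(F_u,G_u)$, and stochastic ordering plus H\"older continuity finish the argument. The paper shortcuts your Step~3 by using the finest admissible partition into elementary grid blocks $[(i_{j-1}+1):i_j]$, on which no clamping occurs, so $\RKM{}=\KM{}$ and no induction is needed; also, \cref{cond:dense_covariates} only guarantees that these blocks have covariate width at most $\delta_n$ (not between $k\delta_n$ and $2k\delta_n$), which affects only constants.
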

\noindent\emph{Proof.} See \cref{aa:sidr_modified_consistency}.

The main difference to \cref{th:sidr_plain_consistency} is that the recursive estimator is consistent under the usual stochastic order (\cref{cond:stochastic_order}), rather than the stronger hazard rate order (\cref{cond:hazard_rate_order}). This requires additional technical assumptions (\cref{cond:regularity_G,cond:support_X}) to ensure that the Kaplan--Meier estimator over non-identically distributed observations with covariates $X_i$ in a small interval around a given $x$ is close to the true outcome CDF at that $x$; see \citet{zhouConsistencyKaplanMeierEstimator2017} for related theoretical results. The hazard rate order for \cref{th:sidr_plain_consistency} avoids these difficulties, but it is an unnecessarily strong assumption for (the modified version of) S-IDR.

While the modified S-IDR achieves the same rate as the direct estimator in \cref{th:sidr_modified_consistency}, there is a caveat. The value $c_n = kC_2C_3n \rho_n^{1/(1+2\alpha)}$ for choosing the restricted index intervals depends on $\alpha$, $C_2$ and $C_3$, which are unknown in practice. The parameter $c_n$ behaves like the bandwidth of a histogram, which needs to be chosen carefully to achieve the optimal convergence rate. In our experience the estimator performs well without binning (i.e., with $c_n = 1$), and we believe it is also consistent without this modification, but we were not able to prove this.

\subsection{Computational aspects}\label{ss:computational_aspects}
S-IDR has a clear motivation in terms of the CMV property, but it may seem computationally intractable for larger $m$ due to the exponential number of partitions $|\mathcal{P}_{r:s}|$. In this section, we show that this is not an obstacle and, moreover, it is much faster to compute S-IDR than the plain survival IDR on the same data. First, we apply the \emph{Pool Adjacent Violators} (PAV) algorithm \citep{ayerEmpiricalDistributionFunction1955,barlowIsotonicRegressionProblem1972} to accelerate the computation for any fixed threshold $y \in \Rp$, then a multi-threshold acceleration algorithm [\PRK, \citealp{henziAcceleratingPoolAdjacentViolatorsAlgorithm2022}] to compute the complete solution for all $y$ simultaneously.

The following lemma helps to deal with these partitions, which determine the clamping bounds.
\begin{lemma}[One-step recursion form]\label{lem:sidr_one_step_recursion}
    The S-IDR clamping step at \eqref{eq:sidr_clamp} is equivalent to
    \begin{equation}\label{eq:sidr_clamp_binary_splits}
        \RKM{r:s}(y) = \clamp\left(\KM{r:s}(y), \max_{r \leq k < s} \min \{ \RKM{r:k}(y), \RKM{k+1:s}(y) \}, \min_{r \leq k < s} \max \{ \RKM{r:k}(y), \RKM{k+1:s}(y) \}\right),
    \end{equation}
    where the bounds are determined by interval partitions $P \in \mathcal{P}_{r:s}$ of size $|P| = 2$.
\end{lemma}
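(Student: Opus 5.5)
The plan is to prove the lemma by strong induction on the length $s-r$, showing that the two clamping bounds in \eqref{eq:sidr_clamp} coincide with the binary-split bounds in \eqref{eq:sidr_clamp_binary_splits}. The clamped value depends only on $\KM{r:s}(y)$ and the two bounds, so equal bounds give equal definitions. Fix $y$ and suppress it. Write
\[
L_{r:s} := \max_{P \in \mathcal{P}_{r:s}} \min_{I \in P} \RKM{I}, \qquad L'_{r:s} := \max_{r \le k < s} \min\{\RKM{r:k}, \RKM{k+1:s}\},
\]
and define $U_{r:s}$ and $U'_{r:s}$ analogously with min and max interchanged. The two-block partitions form a subset of $\mathcal{P}_{r:s}$, so $L'_{r:s} \le L_{r:s}$ and $U'_{r:s} \ge U_{r:s}$ hold immediately. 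The work is in the reverse inequalities.

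For $L_{r:s} \le L'_{r:s}$, take any partition $P = \{I_1, \dots, I_p\}$ of $[r:s]$ into consecutive blocks with $p \ge 3$. Let $k$ be the right endpoint of $I_1$, and set $J = I_2 \cup \dots \cup I_p = [k+1:s]$. Then $J$ is a strictly shorter interval, and the interval $\RKM{J}$ is defined by the recursion \eqref{eq:sidr_clamp}. It is therefore a clamp of $\KM{J}$ into $[L_J, U_J]$. In particular $\RKM{J} \ge L_J$, where $L_J$ is the max over all partitions of $J$. The partition $\{I_2, \dots, I_p\}$ of $J$ has at least two blocks, so
\[
L_J \ge \min_{j \ge 2} \RKM{I_j}.
\]
Combining these gives
\[
\min\{\RKM{I_1}, \RKM{J}\} \ge \min\{\RKM{I_1}, \min_{j\ge2}\RKM{I_j}\} = \min_{I \in P}\RKM{I}.
\]
The left-hand side is one of the terms in $L'_{r:s}$, so $L'_{r:s} \ge \min_{I\in P}\RKM{I}$ for every $P$, hence $L'_{r:s} \ge L_{r:s}$. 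The upper bound is handled symmetrically, using $\RKM{J} \le U_J \le \max_{j\ge 2}\RKM{I_j}$.

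The main subtlety is that this argument needs $\RKM{J} \in [L_J, U_J]$, i.e.\ that the clamp on $J$ is well-defined with $L_J \le U_J$. If $L_J > U_J$, the convention for $\clamp$ could return a value outside one of the two bounds. This is exactly \cref{prop:sidr_well_defined}, which the paper proves and which I may invoke. Invoking it does not create circularity, since it concerns the original definition on the shorter interval $J$. Partitions with $p = 2$ already appear in $L'$, and intervals of length one ($r = s$) are trivial, which serves as the base case.

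In fact no induction hypothesis is needed beyond the well-definedness on $J$: the argument above compares the full-partition bounds with the binary ones directly, using only that $\RKM{J}$ lies between its own bounds. So the final write-up will be a direct argument for each $r<s$, citing \cref{prop:sidr_well_defined}.
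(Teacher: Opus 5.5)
Your proof is correct and uses the same coarsening mechanism as the paper's. Merging blocks never lowers $\min_{I\in P}\RKM{I}$ (nor raises the maximum), because the clamped value on a merged block lies between the bounds from its sub-blocks, and this needs \cref{prop:sidr_well_defined}. The paper instead repeatedly merges the minimising block of an optimal partition with a neighbour, using maximality to keep the minimum fixed, whereas your one-shot merge of $I_2,\dots,I_p$ shows directly that every partition is dominated by a binary split. Your version also makes explicit the reliance on well-definedness that the paper leaves implicit.
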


Informally, to find the bounds within which $\KM{r:s}(y)$ is clamped for the index interval $[r:s]$, we need to account for the $s - r$ split points $r \leq k < s$ and the associated values $\RKM{r:k}(y), \RKM{k+1:s}(y)$. This allows a dynamic programming approach to compute $\RKM{r:s}(y)$, going down the diagonals of the triangle $r \leq p \leq q \leq s$, described in detail in \cref{alg:rkm}.

\begin{algorithm}
    \caption{Dynamic program for $\RKM{r:s}(y)$}\label{alg:rkm}
    \begin{algorithmic}[1]
        \Procedure{Triangle Bounding Sweep}{$\{(X_i, T_i, \Delta_i)\}_{i=1}^n$, $1 \leq r \leq s \leq m$, $y \in \Rp$}
            \State $\text{RKM}[r\!:\!s][r\!:\!s]\gets\text{undef}$ \Comment{The $\{\RKM{I}(y): I \subseteq [r:s]\}$ are stored in this array}

            \For{$i\gets r$ \textbf{to} $s$}
                \State $\text{RKM}[i][i]\gets \KM{i:i}(y)$ \Comment{Clamping is not needed for $I$ with $|I| = 1$}
            \EndFor

            \For{$\ell\gets 2$ \textbf{to} $s-r+1$} \Comment{Interval length $\ell$ counts the sub-diagonal index}
                \For{$p\gets r$ \textbf{to} $s-\ell+1$} \Comment{Interval start index $p$}
                    \State $q\gets p+\ell-1$;\; $L \gets 0$;\; $U \gets 1$ \Comment{Interval end index (inclusive) $q$}
                    \For{$k\gets p$ \textbf{to} $q-1$} \Comment{Cut points $k$ parameterise $\{P \in P_{p:q} : |P| = 2\}$}
                        \State $L\gets \max\!\big(L,\,\min\{\text{RKM}[p][k],\text{RKM}[k+1][q]\}\big)$
                        \State $U\gets \min\!\big(U,\,\max\{\text{RKM}[p][k],\text{RKM}[k+1][q]\}\big)$
                    \EndFor
                    \State $\text{RKM}[p][q]\gets \clamp(\KM{p:q}(y), L, U)$
                \EndFor
            \EndFor
            \State \Return $\text{RKM}[r][s]$
        \EndProcedure
    \end{algorithmic}
\end{algorithm}

\begin{remark}[Implementation tweaks]\label{remark:alg_rkm_tweaks}
    Small modifications to \cref{alg:rkm} may improve performance. For example, at line \num{13}, the computation of the Kaplan--Meier estimator $\KM{p:q}(y)$ may be skipped if $L = U$. Moreover, inside the loop over $k$ starting at line \num{9}, we could check whether $L = U$ and break from the loop earlier. Such modifications are implemented in the software package.
\end{remark}

The computation of the Kaplan--Meier estimators $\{\KM{I}(y): I \subseteq [r:s]\}$ cannot be accelerated beyond $\Theta(|O_{r:s}|^3 \log |O_{r:s}|)$. We can store our data to make it inexpensive to select by ranges $[r:s]$ of the covariate $X$, but we need to sort the observations $O_{r:s}$ by event time $T$ (and indicator $\Delta$), incurring a $\Theta(|O_{r:s}| \log |O_{r:s}|)$ cost. The $(s - r + 1) (s - r + 2) / 2 = \Theta(|O_{r:s}|^2)$ estimators share $\Theta(|O_{r:s}|^3)$ unique factors that the Kaplan--Meier estimators are built from. Faster computation is only possible if the number of unique covariate values $X$ or the number of unique event times $T$ is known to be small. These costs are not incurred for each threshold when we compute the complete S-IDR solution, as we reuse computations between thresholds.

For a threshold $y \in \Rp$, the PAV algorithm can be applied to the values $\RKM{\cdot}(y)$ to evaluate the S-IDR $\max-\min$ equation, because $\RKM{\cdot}(y)$ satisfies the CMV property \citep{robertsonAlgorithmsOrderRestricted1980} by construction. As is needed for quantile regression or the method from \PRK, see \citep{bestMinimizingSeparableConvex2000}, our PAV implementation needs a merge step that is more costly than the weighted average computation used in classical isotonic mean regression. In fact, because the values $\RKM{\cdot}(y)$ are defined recursively, it is necessary to compute all $\{ \RKM{I}(y) : I \subsetneq [r:s] \}$ to obtain $\RKM{r:s}(y)$.

In the worst case, $O(m^2)$ self-consistent Kaplan--Meier estimators $\RKM{\cdot}(\cdot)$ need to be evaluated to compute S-IDR for a single threshold. This is encountered only in unfavourable instances where $Y$ is independent of $X$ or $Y$ decreases with $X$ (violating modelling assumptions); when $Y$ increases everywhere with $X$, the count drops to $O(m)$. \Cref{fig:expected_runtime} illustrates both regimes.

Quantitatively, adding a positive slope to the independent simulation decreases the share of $\RKM{\cdot}(0)$ that must be evaluated from $26.2\%$ to $6.6\%$ of the $m(m+1)/2$ intervals $[r:s] \subseteq [1:m]$. The effect magnifies for the complete S-IDR solution, because more partial computations can be reused across thresholds and the smaller partitions yield faster individual Kaplan--Meier evaluations. Computing the complete S-IDR solutions took $\num{46}$ and $\num{1.1}$ seconds in the unfavourable and favourable cases respectively at $n = m = \num{2500}$, on a 2021 consumer laptop (Intel i7-1165G7 CPU).

\begin{figure}
    \newlength{\axw}\setlength{\axw}{0.38\textwidth}
    
    \begin{tikzpicture}[
        trim axis left,
        alt={Scatter plot with covariate on the horizontal axis and the response on the vertical axis. Point cloud of censored and observed points hovers around, but on average just below, the 0 threshold with no visible covariate / response relationship.}
    ]
        \begin{axis}[
                scale only axis,
                width=\axw, height=0.7142857\axw,
                xmin=-1.2, xmax=1.2,
                grid=major,
                grid style=dotted,
                axis x line*=bottom,
                axis y line*=left,
                xlabel={$x$},
                ylabel={$y,c$},
                ymin=-4.5, ymax=4.5,
                table/col sep=comma,
                scatter, only marks,
                scatter/classes={
                    TRUE={mark=*, mark size=1pt, draw opacity=0, fill opacity=0.25}, 
                    FALSE={mark=x, mark size=2pt, draw opacity=0.5} 
                },
                scatter src=explicit symbolic,
                legend cell align=left
            ]
            \addplot+[
                mark options={draw=black, fill=black},
            ] table[header=true, x=x, y=t, meta=d] {e4-worst-case-normal.csv};
            \draw[BrickRed, dashed, ultra thick, opacity=1] (-1.2,0) -- (1.2,0);
        \end{axis}
    \end{tikzpicture}
    \hfill
    \begin{tikzpicture}[
        trim axis left,
        alt={Scatter plot with covariate on the horizontal axis and the response on the vertical axis. Point cloud of censored and observed points hovers around, but on average just below, the 0 threshold with a slight linearly increasing covariate / response relationship visible.}
    ]     
        \begin{axis}[
                scale only axis,
                width=\axw, height=0.7142857\axw,
                xmin=-1.2, xmax=1.2,
                grid=major,
                grid style=dotted,
                axis x line*=bottom,
                axis y line*=left,
                xlabel={$x$},
                ylabel={$y,c$},
                ymin=-4.5, ymax=4.5,
                table/col sep=comma,
                scatter, only marks,
                scatter/classes={
                    TRUE={mark=*, mark size=1pt, draw opacity=0, fill opacity=0.25}, 
                    FALSE={mark=x, mark size=2pt, draw opacity=0.5} 
                },
                scatter src=explicit symbolic,
                legend cell align=left
            ]
            \addplot+[
                mark options={draw=black, fill=black},
            ] table[header=true, x=x, y=t, meta=d] {e4-worst-case-shifted.csv};
            \draw[BrickRed, dashed, ultra thick, opacity=1] (-1.2,0) -- (1.2,0);
        \end{axis}
    \end{tikzpicture}
    
    \begin{tikzpicture}[
        trim axis left,
        alt={A right triangle with the right angle at the bottom-left, containing smaller right triangles in the same orientation along its diagonal, with two large triangles covering a third of the diagonal and covering a significant of the outer triangle's surface}
    ]
        \begin{axis}[
                scale only axis,
                width=\axw, height=\axw,
                xmin=-1.2, xmax=1.2,
                xlabel={partition start $r$},
                ylabel={partition end $s$},
                axis line style={draw=none},
                xtick pos=bottom,
                ytick pos=left,
                xtick=\empty,
                ytick=\empty,
                tick style={draw=none}
            ]
            
            \addplot graphics[
                xmin=-1,
                xmax=1,
                ymin=-1,
                ymax=1
            ] {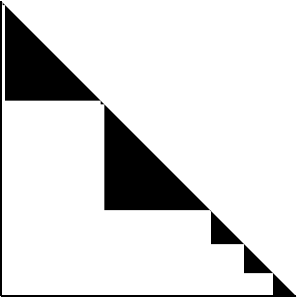};

            \draw[->, thin] ([xshift=-0.25cm] axis cs:-1,1)
                         -- ([xshift=-0.25cm] axis cs:-1,-1);
            \node[left] at ([xshift=-0.25cm] axis cs:-1,0) {};
            
            \draw[->, thin] ([yshift=-0.25cm] axis cs:-1,-1)
                         -- ([yshift=-0.25cm] axis cs:1,-1);
            \node[below] at ([yshift=-0.25cm] axis cs:0,-1) {};
        \end{axis}
    \end{tikzpicture}
    \hfill
    \begin{tikzpicture}[
        trim axis left,
        alt={A right triangle with the right angle at the bottom-left, containing smaller right triangles in the same orientation along its diagonal. All inner triangles are small and cover only a small part of the outer triangle's surface}
    ]
        \begin{axis}[
                scale only axis,
                width=\axw, height=\axw,
                xmin=-1.2, xmax=1.2,
                xlabel={partition start $r$},
                ylabel={partition end $s$},
                axis line style={draw=none},
                xtick pos=bottom,
                ytick pos=left,
                xtick=\empty,
                ytick=\empty,
                tick style={draw=none}
            ]
            
            \addplot graphics[
                xmin=-1,
                xmax=1,
                ymin=-1,
                ymax=1
            ] {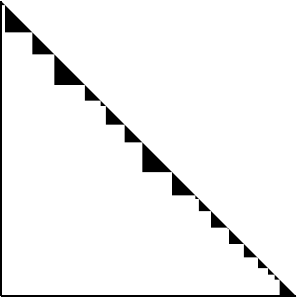};

            \draw[->, thin] ([xshift=-0.25cm] axis cs:-1,1)
                         -- ([xshift=-0.25cm] axis cs:-1,-1);
            \node[left] at ([xshift=-0.25cm] axis cs:-1,0) {};
            
            \draw[->, thin] ([yshift=-0.25cm] axis cs:-1,-1)
                         -- ([yshift=-0.25cm] axis cs:1,-1);
            \node[below] at ([yshift=-0.25cm] axis cs:0,-1) {};
        \end{axis}
    \end{tikzpicture}

    \caption{
        The computational burden depends strongly on the isotonicity of the problem instance. Top left: Simulated $X \sim \mathcal{U}(-1, 1)$, $Y \sim \mathcal{N}(0, 1)$, with $C$ drawn from the same distribution as $Y$ conditional on $C < \infty$, and $\pr(C = \infty) = 1/2$; all variables independent. Top right: Same observations, but $Y$ and $C$ are shifted by $X$, creating a positive dependency. Bottom: Corresponding partitions at $y = 0$ with the black triangles marking the (self-consistent) Kaplan--Meier estimators $\RKM{}$ that need to be computed over ranges $[r:s]$ in the PAV algorithm.
    }

    \label{fig:expected_runtime}
\end{figure}

When computing the complete S-IDR solution $(\hat{F}_x(y))_x$ --- that is, for all thresholds $y \geq 0$ --- we benefit further from the CMV property of $\RKM{\cdot}(y)$. If the $m'$ unique time-to-event observations are $z_1, \ldots, z_{m'}$ in increasing order (analogous to the unique observed covariates $\xi_i$), then the S-IDR $(\hat{F}_x)_x$ is constant at least on the intervals $[z_i, z_{i+1})$ for all $x \in \mathbf{I}$, because each Kaplan--Meier estimator is. Hence, we can restrict our attention to the thresholds with at least one (uncensored) observation. Typically, most covariates $x$ that were grouped together in a level set of $\hat{F}_x(z_i)$ are also grouped together in $\hat{F}_x(z_{i+1})$, and when computing the thresholds from lowest ($z_1$) to highest ($z_{m'}$), we can warm-start the PAV algorithm for threshold $z_{i+1}$ using (some of) the partition elements of threshold $z_i$. This approach has been proposed by \PRK[Algorithm 2 in the Supplementary Material], and by \citet{henziAcceleratingPoolAdjacentViolatorsAlgorithm2022}. Through careful bookkeeping, storing partial computations of the Kaplan--Meier estimators $\KM{\cdot}(y)$ and their self-consistent versions $\RKM{\cdot}(y)$ helps avoid duplicate computations.

Taken together, the full S-IDR solution for all $m'$ thresholds runs in worst-case time
\begin{equation}\label{eq:sidr_complexity}
    O\bigl(n\log(n) \, + \, m' m^3\bigr),
\end{equation}
The $n \log(n)$ term corresponds to the sorting and deduplicating of input data, while the $m' m^3$ term reflects the S-IDR algorithm itself. The cost is determined not by the Kaplan--Meier estimators themselves --- whose partial solutions can be maintained as we ascend through the thresholds $z_i$ --- but by the bounds that are checked. For each threshold, at most $m$ observations (after deduplication) get incorporated into at most $m(m+1)/2$ estimators, and then, at most $m(m+1)/2$ bounds $L$ and $U$ should be collected, which costs at most $m$ for each bound. Both these per-threshold operations cost $O(m^3)$, resulting in the $O(m' m^3)$ complexity over all thresholds.

Several insights further accelerate the computation of the complete S-IDR solution. For example, censored observations whose event time is below that of any uncensored observation can be discarded, because they do not influence any of the individual Kaplan--Meier estimators that S-IDR is built on, and hence do not influence S-IDR itself. Moreover, recalling that S-IDR coincides with IDR when all observations are uncensored, we can compute the lowest thresholds of S-IDR using the significantly faster algorithm for IDR. When we encounter the first threshold $z_i$ with a censored observation, we switch to the S-IDR algorithm. We can warm-start S-IDR by using the last partition $P$ computed by IDR for threshold $z_{i-1}$ and computing $\{ \RKM{I}(z_{i-1}) : I \in P \}$. Then, we update this S-IDR partition for threshold $z_i$ with the procedure from \citet{henziAcceleratingPoolAdjacentViolatorsAlgorithm2022}, but with the previously described modifications.

\begin{figure}
    \includegraphics[
        width=\textwidth,
        alt={Two panels with a line plot, horizontal axis is the response and vertical axis is the cdf. 5 line groups show the true and estimated cdfs.}
    ]{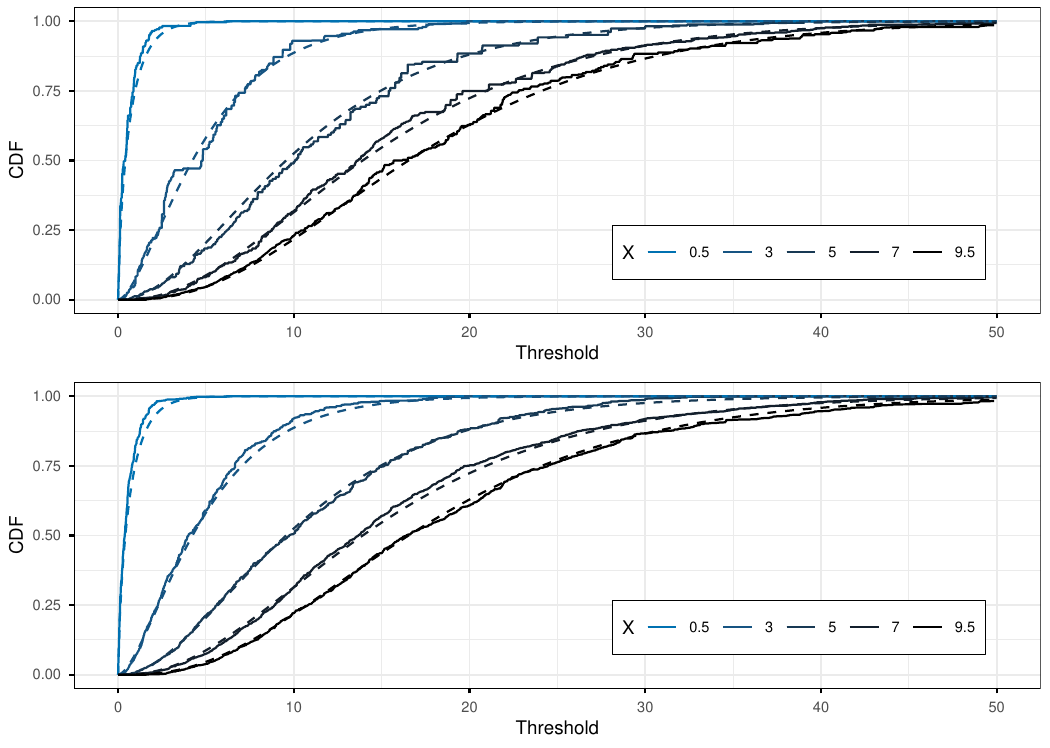}
    \caption{Subagging applied to a partially censored sample of size $n = \num{10000}$ following the distribution of \cref{eq:illustration}. Top panel:  True conditional CDFs (dashed lines) and S-IDR solution (solid lines) on the full sample. Bottom panel:  Subagged S-IDR solution (solid lines) using \num{100} subsamples of size \num{1000} each, linearly aggregated.}
    \label{fig:subagging}
\end{figure}

Like IDR, S-IDR benefits from subsample aggregation (subagging) \citep{buhlmannAnalyzingBagging2002}. As can be seen in \cref{fig:subagging}, the smoothness of the solution can benefit substantially; for computational efficiency too, subagging is generally advisable if the number of uncensored samples $n$ is at least of medium magnitude. The scaling of runtime with $n$ is much more favourable under subagging than without; moreover, computations can be parallelised. The speedup is meaningful: in the case of \cref{fig:subagging}, from $\num{40}$ seconds for $n = \num{10000}$ without subagging, to $\num{6.9}$ seconds for \num{100} subsamples of size $n = \num{1000}$ computed on a single CPU core. Despite worst-case $O(n^4)$ scaling, S-IDR's runtime is acceptable in practice, especially when using subagging on multiple CPU cores.

As will be reflected in the more elaborate performance assessment in the next section, subagging is most beneficial when the conditional distributions are continuous and smoothness is important. The subsample should be large enough to already capture the structure of the problem, which would not arise from aggregating (averaging) individual fits that do not represent these features, as when boosting weak learners.

\section{Extension to multi-dimensional covariates}\label{s:extensions}
So far, we have considered a one-dimensional covariate space $\mathbf{I} \subset \R$, which is a totally ordered set: for $x_1, x_2 \in \mathbf{I}$, either $x_1 \le x_2$ or $x_1 \ge x_2$ (or both). A more general covariate space $\mathcal{X}$, such as (subsets of) $\R^p$, $p \in \amsmathbb{N}$, may not have a total order accurately describing the relationship of all element pairs.
For example, the case study of \cref{ss:case_study} discusses models for mortality based on lab measurements, each affecting mortality in a single known direction through a well-understood biological mechanism. A clean comparison of mortality risk between two patients A and B is feasible only when all measurements are uniformly worse for one patient than the other; once different measurements point in opposing directions, it becomes ambiguous which should carry the greater weight.

There are two ways to apply S-IDR with a partially ordered covariate space $\mathcal{X}$. First, one can apply S-IDR directly, mirroring the approach for IDR \citep[Section~3]{henziIsotonicDistributionalRegression2021}. For statistical and computational efficiency, this works well in practice only if there are few unique covariates or if many of the observed covariates are comparable, such as when covariate dimensions are positively correlated. This approach is implemented in the software package and discussed in more detail in \cref{aa:partial_orders}. Second, the covariate set $\mathcal{X}$ can be reduced to a more structured set --- lower dimensional, typically to an interval $\mathbf{I} \subset \R$ where covariates are totally ordered again --- as proposed in \citet{henziDistributionalSingleIndex2023}. We discuss this distributional index model approach in the remainder of this section, demonstrate its abilities in \cref{ss:index_model_benchmark} and apply it in the case study of \cref{ss:case_study}.

Regression models, especially those working with multidimensional covariates $x \in \mathcal{X}$, commonly assume the target function to lie in a restricted class. The Cox proportional hazards (PH) model, for example, assumes that the target function can be described by rescaling a common hazard by a linear combination of covariates. In the AFT model, too, only a linear combination of covariates determines the represented distribution, and both are examples of distributional linear (single) index models.
We will work with a more general notion: the Distributional Index Model (DIM) of \citet{henziDistributionalSingleIndex2023}, which merely assumes the existence of a family of CDFs $(F_u)_{u \in \R^d}$ (typically $d = 1$) such that $F_u \preceq F_v$ whenever $u \preceq v$, and an index (link) function $\theta: \mathcal{X} \to \R^d$ such that
\begin{equation}\label{eq:dim}
    \pr(Y \le y \mid X) = F_{\theta(X)}(y), \quad y \in \Rp.
\end{equation}
Both the Cox PH and AFT models can be expressed within this framework with $\mathcal{X} = \R^p$ and index $\theta(X) = \beta^T X$ (or equivalently $\theta(X) = \exp(\beta^T X)$) for some coefficient vector $\beta \in \R^p$. The index function $\theta$ should be estimated (up to monotone transformations, under which S-IDR is invariant) by some other estimator $\hat{\theta}$ using the data $((X_i, T_i, \Delta_i))_{1 \le i \le n}$. Then, S-IDR can be fit on $((\hat{\theta}(X_i), T_i, \Delta_i))_{1 \le i \le n}$, with the pseudo-index $\hat{\theta}(X_i)$ as a new covariate. The combined model can then be evaluated at $x \in \mathcal{X}, y \in \Rp$ as $\hat{F}_{\hat{\theta}(x)}(y)$. To avoid overfitting, we recommend splitting the data into two parts: the first for estimating the index model, the second for fitting S-IDR; \cref{ss:index_model_benchmark} reports performance under this protocol. If $\hat{\theta}$ is consistent for $\theta$ and conditions similar to those for \cref{th:sidr_modified_consistency} are satisfied, $\hat{F}_{\hat{\theta}(x)}(y)$ can then be shown to be a consistent estimator for $\pr(Y \le y \mid X = x)$, see \cref{aa:index_model_consistency}. A similar approach was recently taken by \citet{jainIsotonicSurvivalRegression2026} to recalibrate (deep) Cox models.

In non-survival regression the standard target is a conditional point prediction --- mean, median, or some other functional --- so a model for $\hat{\theta}$ (linear regression, random forests, multilayer perceptrons, etc.) can be combined with a distributional model such as IDR to upgrade the point predictor to a distributional one. In survival analysis the standard target is itself a conditional distribution, so an index is not immediately at hand. However, many popular survival models are parametric and relatively simple, so they often specify a risk score, sometimes implicitly \citep{lillelundStopChasingCindex2025}; examples include the linear indices $\beta^T X$ of the Cox PH and AFT models. Certain nonparametric approaches, such as random survival forests \citep{ishwaranRandomSurvivalForests2008}, likewise furnish a score. Other options are to evaluate the predicted survival curve at a fixed time horizon, or to derive a conditional mean (when finite) or median from it. Crucially, in any of these cases the index model must still properly account for censoring whenever it is present.

In practice, it is not always clear whether the model at \eqref{eq:dim} is satisfied and a conditional mean or other risk score can order the conditional distributions stochastically. For models without a canonical risk score, even predicted distributions are typically not strictly stochastically ordered with respect to the risk score, as with random survival forests. Despite these caveats, the index model technique is effective in practice, for example to recalibrate distributions from a miscalibrated model, as we show in \cref{ss:index_model_benchmark}.

\section{Empirical results}\label{s:empirical_results}
Implementations of S-IDR for Rust \citep{matsakisRustLanguage2014}, R \citep{rcoreteamLanguageEnvironmentStatistical2025, wilkeRextendrCallRust2025} and Python \citep{rossumPythonReferenceManual1995, pyo3projectandcontributorsPyO3} extend the existing \texttt{isodistrreg} package of \citet{henziIsotonicDistributionalRegression2021} and are available at \url{https://github.com/AlexanderHenzi/isodistrreg}.

\subsection{Simulations}\label{ss:simulations}
To assess the performance of S-IDR relative to the methods \EBM and \PRK, described at the end of \cref{s:prerequisites}, we compare estimates against six known collections of conditional distributions. All existing order-restricted methods that we are aware of work only on discrete covariates, so we apply these methods to bucketed data instead, and interpolate between the centres of the buckets. Bucket widths follow bias-variance trade-offs influenced by the (possibly unknown) smoothness of the generating distribution. We choose widths $\propto n^{-1/5}$ and $\propto n^{-1/3}$ and compare under both, but show in \cref{fig:bucketing_sensitivity} that for some problems, there is no bucket width that allows these methods to match the performance of S-IDR. This combination of bucketing followed by monotonisation, with \EBM or \PRK, can be interpreted as monotonisation of the survival curve with respect to the covariate, analogous to projection or rearrangement of point estimators \citep{chernozhukovImprovingPointInterval2009}. The test problems are designed to cover a variety of cases: smooth (P1, P2, P3, P6), discrete (P4, P5), adhering to \cref{cond:hazard_rate_order} (P1) or not (all but P1). The censoring distribution is also varied, and problem P2 imitates the case study problem of \cref{ss:case_study}, \cref{fig:post_op_survival}. A detailed description of the test problems and evaluation metrics can be found in \cref{a:sidr_simulations}.

\begin{figure}[h]
    \centering
    \includegraphics[
        width=\textwidth,
        alt={Grid of line plots, one per simulated problem instance, showing sample sizes n on the horizontal axes and metric error on the vertical axis. The lines, each plot has one line per method, decrease and are contained in a shaded band getting narrower to the right. S-IDR has lowest error on the adversarial problems and comparable error on the others.}
    ]{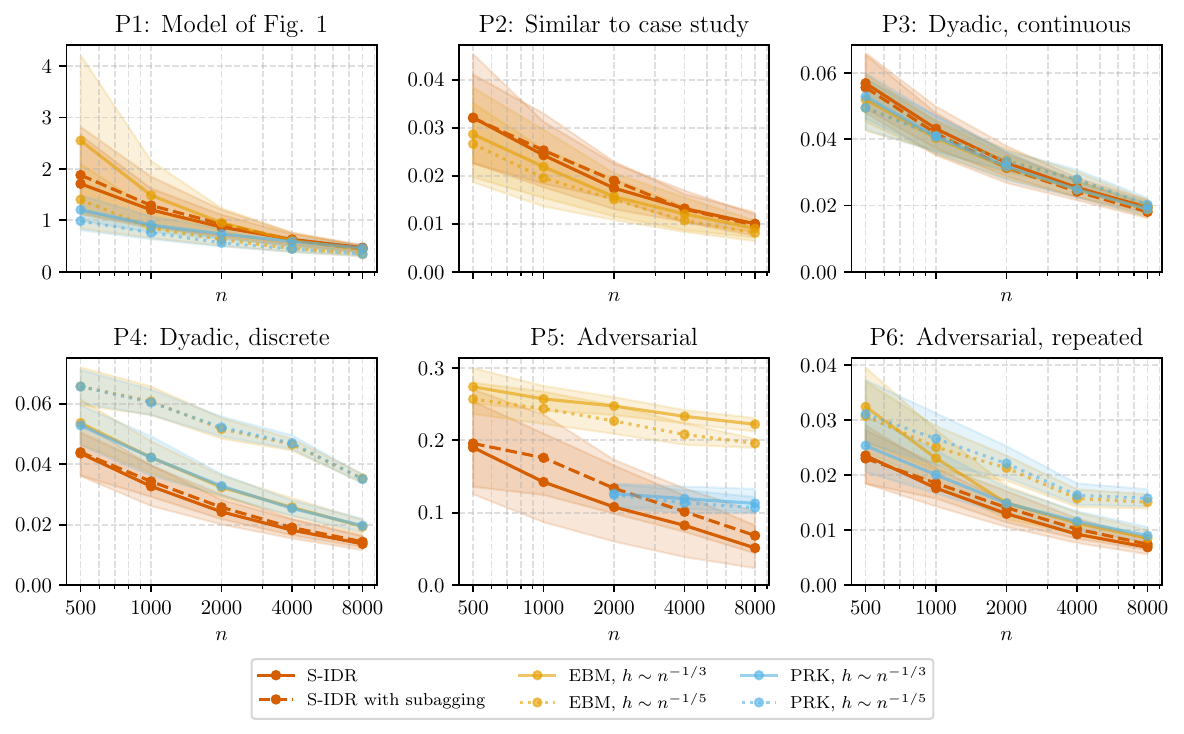}
    \captionsetup{singlelinecheck=off}
    \caption[.]{
        Mean and central 80\% interval of the $L_1$ distance between estimated and ground-truth CDFs, averaged over the covariate support. Lower is better. In some panels, the \EBM (yellow) and \PRK (blue) lines coincide and render as green. For subagging, 50 subsamples of size $n' = \max\{n/2,\num{400}\}$ were used. See \cref{a:sidr_simulations} for details.
    }
    \label{fig:simulation}
\end{figure}

Relative to the other methods, S-IDR performs best on problems that benefit from dynamic bucketing. That includes problems where buckets contain violations of \cref{cond:hazard_rate_order}, which in some cases causes the local estimate to deviate outside the range of the CDFs within the bucket, see \cref{fig:example_cmv_violation_population,a:cmv_violation}. Subagging S-IDR helps when the distributions are smooth, but when the problem structure is more complex and hazard rate order violations are present (P5, P6), it is best to use all samples for a single S-IDR fit (or at least keep subsamples large).

A caveat on the comparison: the \PRK estimator is not defined at all evaluation times for all instances, and is therefore omitted from P2 and from portions of P5.

Additional diagnostic plots, as well as an example fit for each of the methods compared, are contained in \cref{a:sidr_simulations}. These also show that S-IDR can find covariate points at which sharp transitions to a different distribution occur. Moreover, diagonal structures are a natural fit for S-IDR's dynamic bucketing, which is best verified visually.

\subsection{Single index model benchmark}\label{ss:index_model_benchmark}
For each of 12 well-known time-to-event data sets from different domains --- survival in breast cancer research, insurance claim height, and others --- we evaluate three index models, alone and combined with S-IDR, by averaging over many train-test splits. We measure performance using four established metrics in survival analysis: the c-index \citep{harrellEvaluatingYieldMedical1982}, integrated Brier score (IBS) \citep{grafAssessmentComparisonPrognostic1999}, D-calibration \citep{haiderEffectiveWaysBuild2020} and 1-calibration \citep{hosmerGoodnessofFitTestsLogistic1985, andresNovelLearningAlgorithm2018}. Results are reported in \cref{fig:index_model_benchmark,fig:index_model_benchmark_calibration}. S-IDR mostly fixes severely miscalibrated fits, while predictions that are already well calibrated rarely improve, and the procedure is especially effective when the base model is structurally too restrictive (e.g., a Cox PH model fit to a discrete outcome). Because sample splitting halves the data available to each model, a few thousand samples are typically required before the calibration gain outweighs the split penalty. A detailed description of the data sets, metrics and benchmarking methodology is provided in \cref{a:benchmark}.

\begin{figure}[ht]
    \includegraphics[
        width=0.4\textwidth,
        alt={Three columns, one for each base model AFT, Cox PH, and survival forest. Height in the column indicates \% change in concordance index, higher is better. AFT mostly around 0, Cox PH mostly slightly negative, survival forest a wider scatter in both directions. Smaller data sets tend to lie below larger data sets.}
    ]{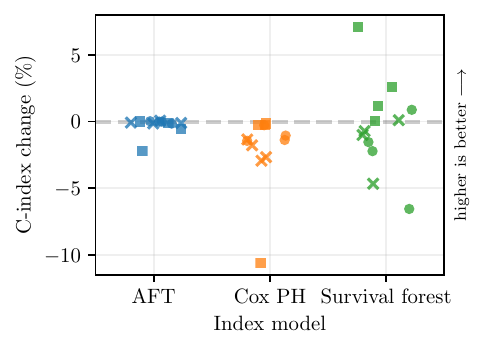}
    \hspace{4em}
    \includegraphics[
        width=0.4\textwidth,
        alt={Three columns, one for each base model AFT, Cox PH, and survival forest. Height in the column indicates \% change in integrated Brier score, lower is better. AFT mostly slightly positive, Cox PH mostly slightly positive, survival forest a wider scatter in both directions. Smaller data sets tend to lie above larger data sets. Each column has one outlier showing a 20\% decrease.}
    ]{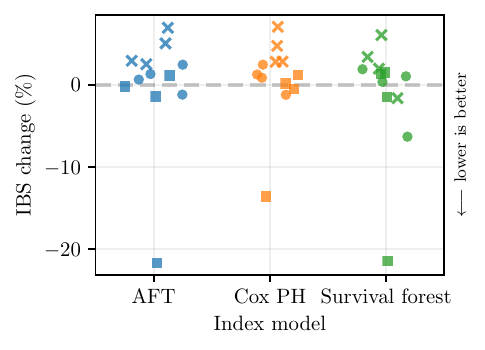}
    \caption{Change in model quality metrics, averaged across \num{50} train-test splits, per data set --- \mx (small, fewer than 500 observed events), \mo (medium, 500--3000 observed events), \ms (large, more than 3000 observed events) --- and index model. Each point represents the average of the metric over the train-test splits of a single data set. Left: c-index. Right: Integrated time-dependent Brier score (IBS).}
    \label{fig:index_model_benchmark}
\end{figure}

\Cref{fig:index_model_benchmark} illustrates how the c-index (a measure of discrimination ability) and IBS (capturing both discrimination and calibration) are affected when S-IDR is stacked as a distributional link via sample splitting. Data sets that are small (fewer than 500 observed events) or medium-sized (up to 3000 observed events) generally exhibit a modest drop in performance, while larger data sets tend to gain from this approach. This pattern arises because sample splitting reduces the number of observations available to each of the two individual models, leaving too few samples to fit either one well. The strongest improvements are seen on (partially) discrete response variables.

\begin{figure}[h]
    \includegraphics[
        width=0.45\textwidth,
        alt={Scatter plot for metric D-calibration with horizontal axis the metric for index model only, vertical axis the metric for index model with S-IDR as distributional link. Diagonal marked to signify no change in metric, with the bottom-left rectangle of values below alpha = 0.25 marked, closer to this rectangle is better. Some points in the rectangle, most points in a slightly uptrending band between 0.4 and 0.9 on the vertical axis. Large datasets with initial miscalibration improve most.}
    ]{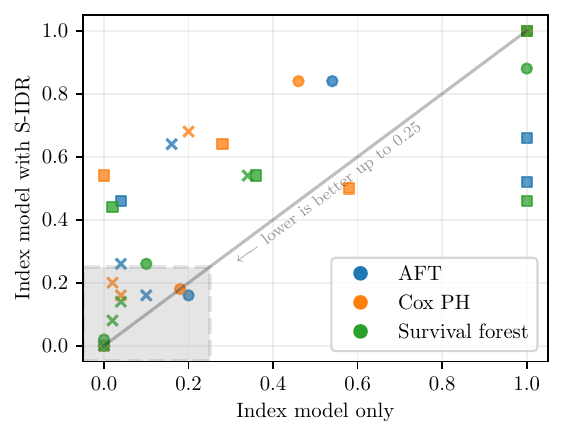}
    \hfill
    \includegraphics[
        width=0.45\textwidth,
        alt={Scatter plot for metric D-calibration with horizontal axis the metric for index model only, vertical axis the metric for index model with S-IDR as distributional link. Diagonal marked to signify no change in metric, with the bottom-left rectangle of values below alpha = 0.25 marked, closer to this rectangle is better. No points near the rectangle, all points in a slightly uptrending band between 0.5 and 0.9 on the vertical axis, starting at 0.2 on the horizontal axis. Large datasets with initial miscalibration improve most.}
    ]{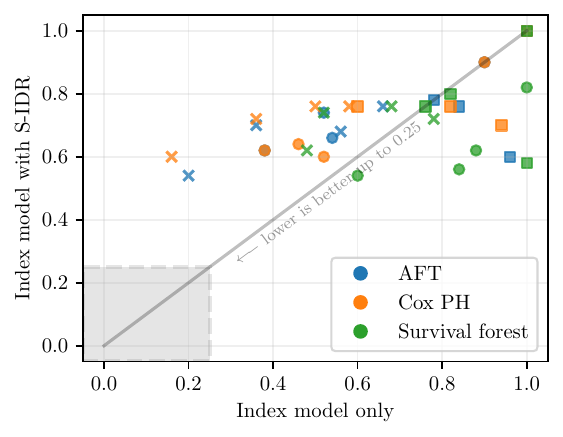}
    \caption{The calibration metrics yield a $p$-value for each of \num{50} train-test splits. For each data set (size markers as in \cref{fig:index_model_benchmark}) and index model, we show the share of train-test splits with a $p$-value below the threshold $\alpha = 0.25$. Left panel: D-calibration. Right panel: 1-calibration.}
    \label{fig:index_model_benchmark_calibration}
\end{figure}

\Cref{fig:index_model_benchmark_calibration} similarly analyses calibration properties through two metrics. Note that most model fits, with or without S-IDR as a distributional link, are miscalibrated according to these metrics: the share of train-test splits for which the test rejects is above the chosen threshold $\alpha$. All cases in which D-calibration does not reject come from small or medium-sized data sets; this is consistent with a calibrated fit, but more likely stems from a lack of power. While applying sample splitting with S-IDR worsens miscalibration in some cases, it tends to improve those cases that are poorly calibrated initially (when the calibration test rejects for $\ge 75\%$ of train-test splits using only the base model), especially when the data set is large (more than 3000 observed events). We conclude that sample splitting with S-IDR is a potential remedy when a test like D-calibration or 1-calibration rejects, suggesting a miscalibrated model fit.

\subsection{Case study: the Model for End-stage Liver Disease (MELD) for organ allocation}\label{ss:case_study}
The Organ Procurement \& Transplantation Network (OPTN) is the U.S.\ government-authorized national system that manages organ transplant policy and allocation, including maintenance of the national waiting list. Adopted in 2022 \citep{OPTN_LiverIntestinalCommittee_2022_ImprovingLiverAllocation}, the Model for End-Stage Liver Disease (MELD) 3.0 score \citep{kimMELD30Model2021} assigns liver transplant candidates a value of at least 6 (in practice typically capped at 40), where higher scores indicate greater urgency for transplantation. Its inputs --- gender and lab measurements such as serum bilirubin and sodium --- relate to mortality from liver failure through known mechanisms:
\begin{equation}\label{eq:meld}
    \text{MELD 3.0} = 1.33 \, (\text{if female}) + 4.56 \ln(\text{bilirubin}) + 0.82 (137 - \text{Na}) - \ldots
\end{equation}
These measurements, and the resulting scores, are objective and verifiable, which is essential for policy use. The score derives from a Cox PH model, which predicts mortality up to 90 days after wait list registration of liver transplant candidates at least twelve years old. This probability can be computed from a MELD score $x$ through the relationship
\[
1 - \hat{F}_x(t) = \hat{S}_x(t) = \hat{S}_0(t)^{\exp(0.17698\,x - 3.56)}, \qquad 6 \le x,
\]
where $\hat{S}_0(t)$ is the base survival estimate of the Cox PH model,
\[
\hat{S}_0(0) = 1, \qquad 
\hat{S}_0(15) = 0.991, \qquad 
\ldots \qquad 
\hat{S}_0(90) = 0.946,
\]
specified in the supplementary material of \citet{kimMELD30Model2021}. Observe that the lifetime distributions are stochastically decreasing in the covariate, in contrast to the increasing convention adopted so far in the paper.
Together with geographical proximity, the MELD score is the primary determinant of the organ assignment among compatible matches, unless the candidate is a minor and has been assigned a special urgency status. Only when the (integer-rounded) MELD scores are equal is waiting time used as a tie breaker. Mortality while on the wait list varies strongly with the initially assigned MELD score: After 30 days, almost all candidates with a score below 15 are still on the list, whereas a significant majority of candidates with a score 35 or above are no longer. The OPTN reports that in 2024, \num{11458} liver transplants occurred under its allocation rules \citep{hrsa_optn_2024_liver_transplants_11458}.
Our aim is to validate the fit of the above Cox model using S-IDR, and to estimate survival curves post transplant, while comparing with alternative nonparametric methods. Finally, we will investigate when two scores should be considered ``tied'' for allocation tie breaking, and show how S-IDR makes new allocation rules possible that rely strictly on an existing, trusted urgency score.
To perform this analysis, we rely on the OPTN Standard Transplant Analysis and Research files, containing data up to December 31, 2025. We will use only MELD scores based on lab measurements, excluding exception scores, to make the results more interpretable, but the analyses below can be performed for any urgency score.

\subsubsection{Model validation}
We validate MELD's fit to newer, out-of-sample data at the 90-day horizon for which it was optimised. MELD has been in use since mid-July 2023, and we restrict to liver transplant candidates joining the wait list in 2024 or 2025. Using MELD scores assigned when joining the wait list as a covariate, we model candidate mortality over time. If MELD is a good (index) model for mortality, the resulting S-IDR fit should be accurate and close to the original model. If it is not, predictions will differ and violations of monotonicity will be reflected in large plateaus in the S-IDR fit.

Time is defined as the number of days since joining the wait list; events are either death or removal from the wait list due to the patient being too ill to receive a transplant. Censoring occurs at the last follow-up date or on December 31, 2025, whichever is earlier, and consequently the survival probability estimate has an inherent downward bias when interpreted as a model for mortality only. Moreover, the noninformative censoring of \cref{cond:noninformative_censoring} may not be satisfied, as patients whose mortality risk increases while on the wait list are more likely to receive a transplant (and be censored). Nonetheless, these considerations are unavoidable and apply to MELD and S-IDR (and to the models we will compare with) equally, so we can evaluate the MELD model fit using S-IDR. We include only first-time liver transplant candidates 18 years or older when they joined the list, who did not receive an exception score and were not waiting for other organs. A total of \num{22049} events are available, with a 93\% censoring rate, of which a random 80\% subsample is selected for this analysis.

\begin{figure}
    \includegraphics[
        width=0.48\textwidth,
        alt={Line plot with as horizontal axis the MELD score and as vertical axis survival probability. Shown are survival curves decreasing in an S-like shape: first slowly, then quickly, then slowly. The MELD 3.0 Cox PH curve is smooth and lower than the other estimators for high MELD scores. The S-IDR, EBM and PKR scores are non-decreasing, the Beran estimate alternatves heavily for high MELD scores.}
    ]{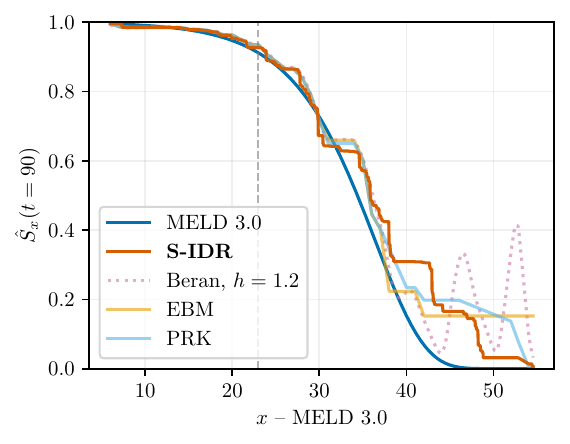}
    \hfill
    \includegraphics[
        width=0.48\textwidth,
        alt={Line plot with as horizontal axis time (one year) and as vertical axis the survival probability. Non-increasing curves start in the top left and decrease ot the bottom right in similar fashion. The MELD 3.0 Cox PH curve stops at 90 days.}
    ]{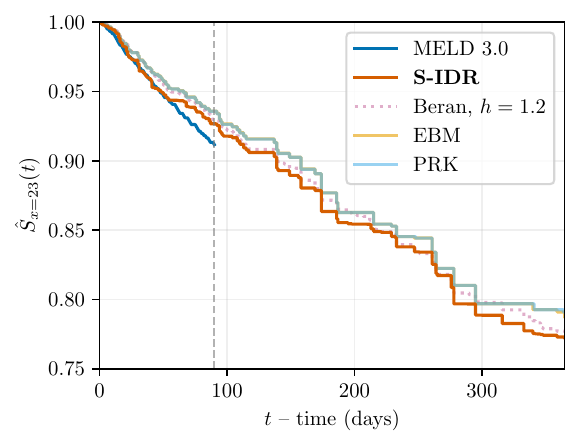}
    \caption{Comparison of estimated survival curves, with the two panels intersecting at the dashed gray lines. Left panel: Estimated survival at $t = 90$ days as a function of the MELD when joining the wait list. Right panel: Estimated survival on the MELD score $x = 23$ as a function of time (MELD 3.0 was computed up to 90 days by its authors).}
    \label{fig:wait_list_survival}
\end{figure}
    
The model fits are visualised for two cross-sections ($t = 90$ days, MELD $x = 23$) in \cref{fig:wait_list_survival}. The survival probability predicted by MELD 3.0 aligns well with the nonparametric S-IDR fit (\num{100} subagging fits using 75\% of the data each); the S-IDR estimate has only a few short plateaus, giving no reason to suspect a major violation of stochastic ordering \cref{cond:stochastic_order}.

The methods \EBM and \PRK, which require a discrete covariate and are therefore applied to the rounded MELD score and visualised with linear interpolation, have longer plateaus than S-IDR or missing values --- especially at high MELD scores, where \PRK reaches 0 and is not defined for all $x$ (masked here by the interpolation). A nonparametric kernel estimate by \citet{beranNonparametricRegressionRandomly1981} (optimised bandwidth $h = 1.25$) exhibits implausible non-monotone behaviour for large MELD scores, illustrating the value of shape constraints in this setting.

Despite MELD being calibrated only for the 90-day horizon, the progression to one year appears natural. The relative gap widens at high MELD scores, possibly reflecting a change in case mix among the most acute patients or improving treatment options for this group.

\Cref{fig:ties} depicts the fit for additional shorter horizons, showing how the Cox PH model compares out-of-sample to S-IDR for $t = 30$ days and even just $t = 10$ days. Since no major disagreement between the models is visible and the Cox PH satisfies proportional hazards, the hazard rate ordering \cref{cond:hazard_rate_order} appears plausible on this data set. Consequently, the Cauchy mean value violations discussed in \cref{ss:from_plain_to_sidr} do not appear to be a concern in this setting, and the bucketing required to implement the approaches of \EBM and \PRK is unlikely to introduce systematic bias.

\subsubsection{Estimating post-transplant mortality}
Violations of the hazard rate order are a more salient concern when examining recipient survival \emph{after} surgery as a function of MELD score, shown in \cref{fig:post_op_survival}. This is of interest if, for example, we want to relate the urgency-based MELD allocation policy to approaches prioritising (also) overall utility, as has been debated for liver wait list management in recent years \citep{luoMELDMetricSurvival2018,lineckerPotentiallyInappropriateLiver2018}.

Under analysis are first-time liver recipients aged at least 18 at the time of the transplant, which took place between January 1, 2006 and December 31, 2025. Censoring occurs at the last attended follow-up, the cohort date or December 31, 2025, whichever is earlier. Recipients who received an exception score are excluded; depicted are lab MELD scores only. Available are \num{141052} events with an 85\% censoring rate, of which a random 80\% subsample is selected for this analysis. S-IDR was applied with \num{100} subsamples of size \num{56421}.

\begin{figure}
    \includegraphics[
        width=0.48\textwidth,
        alt={Line plot with as horizontal axis time and as vertical axis survival probability. Many non-crossing survival curves, corresponding to integer MELD scores in the range 20-50, start in the top left and decrease first quickly, then more slowely. They first diverge, then the scores in the range 20-30 converge after 1.5 years. Scores 20-40 converge almost after 4 years.}
    ]{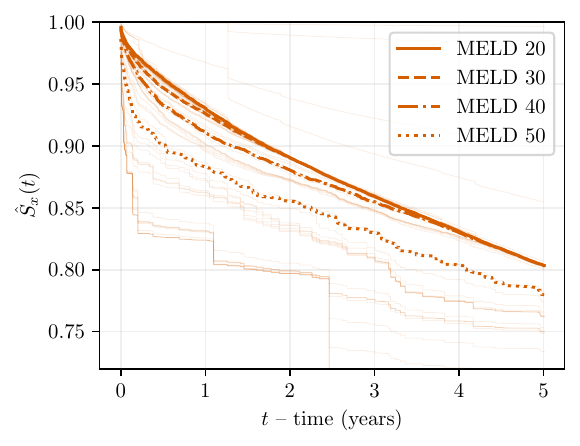}
    \hfill
    \includegraphics[
        width=0.48\textwidth,
        alt={Line plot with as horizontal axis the MELD score and as vertical axis survival probability. EBM and PRK lines coincide and are entirely flat on the left half, while the S-IDR curve initially decreases from a higher level. The Beran curve is not monotone and increases initially. For high MELD scores on the right, S-IDR decreases much more while the other scores remain almost flat.}
    ]{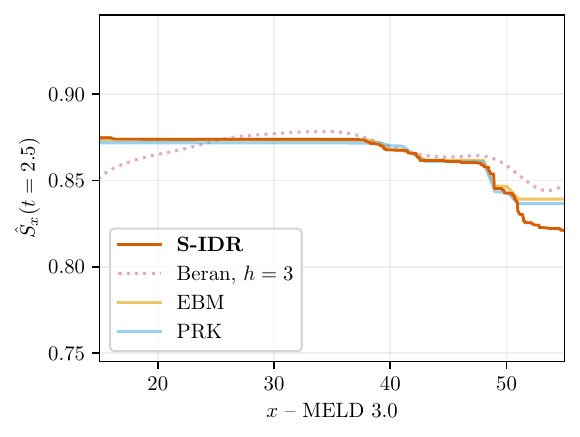}
    \caption{Mortality after liver transplant by last known MELD values just before transplantation.
    Left panel: The S-IDR fit shows how mortality differs in the short term, but after 1.5 years the MELD 20 and MELD 30 mortality is essentially equal. Similarly, the MELD 20 and MELD 40 mortality are no longer meaningfully different after 4 years.
    Right panel: At $t = 2.5$ years we compare the predicted mortality across MELD scores for different methods, linearly interpolating between (missing) integer values at which the discrete covariate groups are centred for the \EBM and \PRK estimators, which are essentially equal for this time horizon.}
    \label{fig:post_op_survival}
\end{figure}

The estimated post-transplant hazards cross: short-term mortality is markedly higher for recipients transplanted with higher MELD scores, in line with expectations, but after roughly three years several of the survival curves draw closer and become nearly indistinguishable, so that the hazards are initially higher for the higher-MELD group and later higher for a lower-MELD group. A plausible mechanism is selective depletion: the most frail individuals among the higher MELD scores die earlier, while comparable mortality in the lower-MELD group is spread over a longer period. Because hazards cross, \cref{cond:hazard_rate_order} is violated, and methods that aggregate observations by rounding MELD into buckets --- such as the \EBM and \PRK procedures --- should be used with caution in this setting. The outer estimates, shown as the top and bottom lines in the left panel and the left- and right-most values in the right panel of \cref{fig:post_op_survival}, are based on few samples and are less reliable, with 99\% of the included patients having a MELD between 18 and 50.

Note that the Cox PH model could not have recovered this effect, because it can only accurately model distributions satisfying the hazard rate order \cref{cond:hazard_rate_order}. Unlike, e.g., the model of \citet{luoMELDMetricSurvival2018}, we did not take into account any covariates besides the MELD score. If that is desired, factors such as age, race/ethnicity, blood type, and primary diagnosis could be incorporated by applying S-IDR using the techniques of \cref{s:extensions}.

\subsubsection{When should scores be considered tied?}
When two liver transplant candidates have the same integer-rounded MELD score, their scores are considered tied and the candidate on the wait list longest will be prioritised. We propose a more refined approach using our S-IDR estimate of \cref{fig:wait_list_survival} for the \num{90}-day mortality, depicted again in \cref{fig:ties}. The estimate is rather flat for certain MELD score ranges, suggesting that the survival probability at \num{90} days is not meaningfully different between such scores.

\begin{figure}[H]
    \centering
    \includegraphics[
        width=0.8\textwidth,
        alt={Line plot with the horizontal axis the MELD score and the vertical axis survival probability. For 10, 30 and 90 days two non-increasing survival curves each: a smooth Cox PH S curve and a closely related S-IDR curve with small plateaus. Highlighted are the MELD scores where the 90-day curve reaches a equally spaced vertical probability thresholds.}
    ]{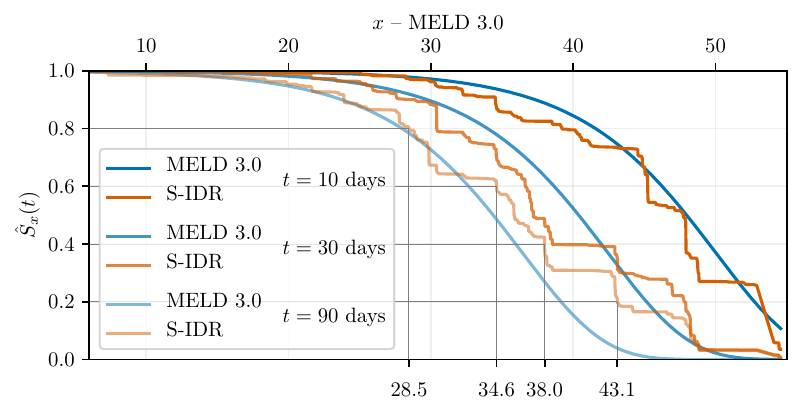}
    \caption{Starting from equally spaced survival probabilities, we identify the MELD score at which S-IDR attains each level. Also shown are the S-IDR fits for $t = 10$ and $t = 30$ days, which could alternatively be used to devise buckets.}
    \label{fig:ties}
\end{figure}
\begin{table}[H]
    \centering
    \captionsetup{aboveskip=0pt, belowskip=10pt}
    \caption{Illustrative partition of MELD scores with boundaries equally spaced in probability space.}
    \label{tab:probability_buckets}
    \begin{tabular}{l|ccccccccc}
        \hline
        $\hat{S}_x(t = 90 \text{ days}) $  & 90\% & 80\% & 70\% & 60\% & 50\% & 40\% & 30\% & 20\% & 10\% \\
        \hline
        $x$ -- MELD 3.0  & 23.9 & 28.5 & 29.9 & 34.6 & 35.8 & 38.0 & 42.7 & 43.1 & 48.2 \\
        \hline
    \end{tabular}
\end{table}

Instead of rounding to the nearest integer, we can use the S-IDR fit to assess more carefully whether the difference in estimated survival probability up to 90 days is above a chosen threshold, e.g., 5\%. Alternatively, we can devise buckets of MELD scores of a chosen granularity that represent equal probability levels, but are not regularly spaced in ``MELD distance''. An illustrative example with 10\% level buckets is depicted in \cref{fig:ties} and \cref{tab:probability_buckets}. The smallest meaningful difference in mortality risk should determine the bucket count. Regardless of MELD's initial 90-day focus, the S-IDR fit gives us an adaptive interval partition of the covariate domain at each time horizon.

Classic isotonic regression for the mean partitions the covariate space using level sets, and S-IDR yields such a partition for each threshold $t$. For the chosen horizon $t$, the buckets equally spaced in terms of $\hat{S}_x(t)$ we propose here are a coarsening of that partition.
The S-IDR estimate itself is subject to uncertainty, which the chosen threshold for a meaningful difference in survival probability should account for. The isotonicity constraint that S-IDR respects ensures that this distributional estimate can lean on the existing consensus the MELD score enjoys.

\section{Discussion}\label{s:discussion}
S-IDR resolves a specific obstruction in the direct survival analogue of isotonic distributional regression, the plain survival IDR. This censored adaptation --- replacing the empirical CDF with the Kaplan--Meier estimator --- is uniformly consistent only under hazard-rate ordering, because the Kaplan--Meier estimator does not satisfy the Cauchy mean value property that any estimator consistent under stochastic dominance must possess (\cref{s:sidr_plain}). S-IDR clamps the Kaplan--Meier value at every partition scale into the interval forced by its sub-partitions, restoring CMV by construction (\cref{s:sidr}). The resulting estimator is uniformly consistent at the minimax rate under only stochastic dominance, admits standard PAV acceleration, accommodates continuous, discrete, and partially ordered covariates without subjective bucketing, and represents atoms in the conditional outcome distribution that smoothness-based likelihood methods such as Cox PH and AFT cannot capture. The estimator carries no tuning parameters, which makes it suitable as a neutral baseline. The MELD case study (\cref{ss:case_study}) puts these properties to work on a real allocation problem and surfaces an empirical finding --- crossing hazards post-transplant --- that the Cox model underlying MELD is structurally unable to detect.

\Cref{th:sidr_modified_consistency} requires restricting the recursive partitions to a minimum block size $\lceil c_n \rceil$, and this restriction is what makes the proof work; classical proof techniques for monotone least squares do not transfer cleanly across the S-IDR clamping recursion. Simulations strongly suggest that the unmodified estimator (with $c_n = 1$) is consistent as well and reaches the same minimax rate as the plain survival IDR, and we conjecture so. A proof is the principal open problem left by this paper.

We do not provide confidence intervals for S-IDR. Pointwise limit distributions for shape-constrained estimators are non-standard at the boundaries of level sets, and the recursion compounds the difficulty. The most promising route is an $m$-out-of-$n$ bootstrap: S-IDR is close in structure to IDR --- itself a nonparametric $M$-estimator amenable to such schemes --- and our subagging procedure (\cref{ss:computational_aspects}) already produces the relevant sub-sample fits, but formal validity remains to be shown.

S-IDR can be applied to forecast evaluation, analogous to existing isotonic-regression-based calibration diagnostics for probability forecasts such as CORP reliability diagrams \citep{dimitriadisStableReliabilityDiagrams2021}. Comparing the S-IDR recalibrated CDFs (or quantiles) to a model's original probability (or quantile) predictions yields insights into their (mis)calibration. Applied to this problem, S-IDR provides a tuning-free alternative to bucketing approaches employed in calibration metrics such as D-calibration \citep{haiderEffectiveWaysBuild2020} or 1-calibration \citep{hosmerGoodnessofFitTestsLogistic1985,andresNovelLearningAlgorithm2018}.

The CMV obstruction is not specific to censored CDFs. Any functional that fails pooling betweenness, such as the conditional variance or expected shortfall, could in principle be paired with the same clamping recursion to produce an isotonic version. Whether the resulting estimators inherit useful statistical guarantees is an open line of research.

\section*{Acknowledgements and disclaimer}
This study used data from the Scientific Registry of Transplant Recipients (SRTR). The SRTR data system includes data on all donors, wait-listed candidates, and transplant recipients in the US, submitted by the members of the Organ Procurement and Transplantation Network (OPTN). The Health Resources and Services Administration (HRSA), U.S. Department of Health and Human Services provides oversight to the activities of the OPTN and SRTR contractors.
The data reported here have been supplied by the Hennepin Healthcare Research Institute (HHRI) as the contractor for the Scientific Registry of Transplant Recipients (SRTR). The interpretation and reporting of these data are the responsibility of the authors and in no way should be seen as an official policy of or interpretation by the SRTR or the U.S. Government. 

Martin Bladt was supported by the Carlsberg Foundation, grant CF23-1096.

We thank Fadoua Balabdaoui and Yuansi Chen for helpful discussions about the self-consistent Kaplan--Meier estimator and related quantities.

\bibliography{references}

\clearpage

\appendix
\crefalias{section}{appendix}
\crefalias{subsection}{appendix}
\pretocmd{\section}{\clearpage}{}{}%

\section{Plain survival IDR: Uniform consistency, proof of \cref{th:sidr_plain_consistency}}\label{a:plain_survival_idr_consistency}
We now prove \cref{th:sidr_plain_consistency}, the uniform consistency result for the plain survival IDR. The proof is strongly inspired by \citet[Theorem 3.3]{moschingMonotoneLeastSquares2020} and proceeds in three steps. First, we adapt the kernel uniform-consistency inequality of \citet{dabrowskaUniformConsistencyKernel1989} to the non-i.i.d.\ censored setting, obtaining a DKW-type bound for the Kaplan--Meier estimator (\cref{th:dabrowska_adaptation}); a covering argument then turns this into a high-probability bound on the supremum process $M_n$ across all index intervals. Second, we establish that, under hazard-rate ordering (\cref{cond:hazard_rate_order}), the mixture target $\bar{F}_{r:s}$ is itself ordered between the conditional CDFs at the interval endpoints (\cref{prop:order}). Third, these two ingredients are plugged into the bias-plus-variance inequality chain of \citet[Theorem 3.3]{moschingMonotoneLeastSquares2020}, yielding the stated rate.

Below, $\lambda(A)$ denotes the Lebesgue measure of a Borel set $A \subset \R$, and we recall that $\rho_n = \log(n)/n$. For $i=1,\dots,k$, let $Y_i \sim F_i$ and $C_i$ be the independent nonnegative event and censoring times, and set
\[
    T_i := \min\{Y_i,C_i\}, \qquad \Delta_i := \ind{Y_i \leq C_i}.
\]
Define
\[
    H_i^1(t) := \pr(T_i > t, \Delta_i = 1), \qquad H_i(t) := \pr(T_i > t), \qquad \Lambda_i(t) := -\int_0^t H_i(s-)^{-1}\,\dd H_i^1(s),
\]
such that $1-F_i(t) := \prodi_{s \leq t}(1-\dd \Lambda_i(s))$ where $\prodi$ denotes the product integral. If $F_i$ has density $f_i$, we write
\[
    h_i(t) := \frac{f_i(t)}{1-F_i(t)}
\]
for the hazard rate of $Y_i$. Moreover, for $j=1,\dots,k$, define the reduction quantities
\begin{align}
    \bar{H}_j^1(t) &= \frac{1}{j}\sum_{i=1}^j H_i^1(t), &
    \bar{H}_j(t) &= \frac{1}{j}\sum_{i=1}^j H_i(t), \nonumber\\
    \bar{\Lambda}_j(t) &= -\int_0^t \bar{H}_j(s-)^{-1}\,\dd \bar{H}_j^1(s), &
    1-\bar{F}_j(t) &= \prodi_{s \leq t}(1-\dd \bar{\Lambda}_j(s)). \label{eq:fbar_k}
\end{align}
Here $\bar{F}_k(\cdot)$ is the counterpart of the empirical average of CDFs $\bar{F}(\cdot)$ in Theorem 4.6 of \citet{moschingMonotoneLeastSquares2020}. Let furthermore $1-\hat{\amsmathbb{F}}_k$ be the Kaplan--Meier estimator evaluated on $(T_i,\Delta_i)$, $i=1,\dots,k$.

\begin{theorem}[DKW inequality for censored data]\label{th:dabrowska_adaptation}
    For a fixed $\tau \in \Rp$, assume that there exists $\eta > 0$ such that $\min_{i=1,\dots,k} H_i(\tau) \geq \eta$. Then there exist universal constants $d_1,d_2$ such that if $k \geq 864/(\varepsilon\eta^5)$, then for all $\varepsilon > 0$,
    \[
        \pr\Big(\sup_{t \leq \tau} |\hat{\amsmathbb{F}}_k(t) - \bar{F}_k(t)| \geq \varepsilon\Big)
        \leq d_1\exp(-d_2k\varepsilon^2).
    \]
\end{theorem}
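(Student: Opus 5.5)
The strategy is Dabrowska's: write the Kaplan--Meier estimator as a functional of two empirical processes. Then use Dvoretzky--Kiefer--Wolfowitz/Talagrand-type inequalities for these processes, and transfer the bound through a Lipschitz property of the product-integral map on $[0,\tau]$. Set
\[
    \hat H^1_k(t) = \frac1k\sum_{i=1}^k \ind{T_i>t,\Delta_i=1}, \qquad \hat H_k(t) = \frac1k\sum_{i=1}^k \ind{T_i>t}.
\]
By independence of the pairs $(T_i,\Delta_i)$ (not identically distributed), $\mathbb{E}\hat H^1_k=\bar H^1_k$ and $\mathbb{E}\hat H_k=\bar H_k$. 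The Kaplan--Meier estimator equals $\prodi_{s\le t}(1-\dd\hat\Lambda_k(s))$ with $\hat\Lambda_k(t)=-\int_0^t \hat H_k(s-)^{-1}\dd\hat H^1_k(s)$. This is exactly the map defining $\bar F_k$ from $(\bar H^1_k,\bar H_k)$ in \eqref{eq:fbar_k}.

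\textbf{Step 1: DKW for non-i.i.d.\ indicators.} I will show that $\pr(\sup_t|\hat H_k-\bar H_k|\ge\varepsilon)\le c_1e^{-c_2k\varepsilon^2}$, and the same for $\hat H^1_k$. The class of indicators $\{\ind{T>t}\}$ (resp.\ $\{\ind{T>t,\Delta=1}\}$) is a VC class of index 1. Hence symmetrization plus Hoeffding on a grid, or Talagrand's inequality for independent non-identically distributed summands, gives such a bound with universal constants. Concretely, I would discretize at $O(1/\varepsilon)$ points where $\bar H_k$ drops by $\varepsilon/2$, apply Hoeffding at each grid point, and absorb the resulting polynomial factor $1/\varepsilon$ into the exponent. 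This absorption is one source of the condition $k\ge 864/(\varepsilon\eta^5)$.

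\textbf{Step 2: Lipschitz control of the hazard map.} On the event $\{\sup|\hat H_k-\bar H_k|\le\delta\}$ with $\delta\le\eta/2$, we have $\hat H_k(s-)\ge\eta/2$ for $s\le\tau$. Decompose
\[
    \hat\Lambda_k-\bar\Lambda_k = -\int \hat H_k(s-)^{-1}\,\dd(\hat H^1_k-\bar H^1_k) - \int\bigl(\hat H_k(s-)^{-1}-\bar H_k(s-)^{-1}\bigr)\,\dd\bar H^1_k .
\]
The second term is bounded by $\delta/\eta^2$, since $\bar H^1_k$ has total variation at most $1$. The first term is handled by integration by parts: the integrand $\hat H_k^{-1}$ is monotone with variation at most $2/\eta$, so this term is $O(\delta/\eta^2)$ as well. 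The boundary terms, together with the left-limit/jump mismatch between empirical and population integrands, contribute a $1/k$ error, for example $\int \hat H_k^{-2}\,\dd(\text{jumps})$. This is the second place where a requirement of the form $k\gtrsim 1/(\varepsilon\eta^{p})$ appears.

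\textbf{Step 3: product-integral stability and conclusion.} By Duhamel's equation, $(1-\hat{\amsmathbb F}_k)-(1-\bar F_k)=-\int (1-\hat{\amsmathbb F}_k)(s-)\,\dd(\hat\Lambda_k-\bar\Lambda_k)(s)\,\frac{1-\bar F_k(t)}{1-\bar F_k(s)}$. Since $1-\bar F_k(s)\ge\bar H_k(s)\ge\eta$ on $[0,\tau]$, the ratio is bounded by $1/\eta$. Another integration by parts, using that the other factors are monotone with bounded variation, yields $\sup_{t\le\tau}|\hat{\amsmathbb F}_k-\bar F_k|\le C\eta^{-5}(\delta+1/k)$ for a universal constant $C$. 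Choosing $\delta=c\,\eta^5\varepsilon$ and using $k\ge 864/(\varepsilon\eta^5)$ to make the $1/k$ term at most $\varepsilon/2$ gives the bound $c_1e^{-c_2k\eta^{10}\varepsilon^2}$. Because $\eta$ is fixed, this has the stated form $d_1\exp(-d_2k\varepsilon^2)$; I will track whether $\eta$ must enter $d_2$, following Dabrowska's bookkeeping. The main obstacle is Step 3 and the careful tracking in Step 2 of the powers of $\eta$ and of the jump/left-limit discrepancies. The non-identical distributions matter only in Step 1, since the deterministic functional argument uses only averaged quantities.
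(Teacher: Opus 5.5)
Your route is genuinely different from the paper's. The paper does not redo any empirical-process analysis. It embeds the $k$ laws into the kernel conditional Kaplan--Meier model of \citet{dabrowskaUniformConsistencyKernel1989}: covariate uniform on $[-1,1]$, boxcar kernel, $a_n=1/2$, and the laws assigned periodically to cells of width $1/(2k)$. It then checks that the kernel-smoothed targets are exactly $\bar H^1_k$ and $\bar H_k$, hence $\bar F_k$, and reads off her Theorem~2.1, which is where the threshold $864/(\varepsilon\eta^5)$ and the constants come from. A one-line argument handles $\varepsilon\ge 1$. You instead prove the result directly. You use exponential concentration of $\hat H^1_k,\hat H_k$ for independent, non-identically distributed pairs, and then Lipschitz continuity of $(H^1,H)\mapsto\prodi(1-\dd\Lambda)$ on $\{H\ge\eta/2\}$.

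The paper's route is short, but its embedding only matches the deterministic targets. In Dabrowska's i.i.d.\ random-design model, the observations in a kernel window are i.i.d.\ draws from the mixture of the $k$ laws, with a random count. They are not one observation per law, so the transfer tacitly needs an extra argument. Your route handles the actual design, and your Steps~2--3 are sound up to the bookkeeping below: the decomposition of $\hat\Lambda_k-\bar\Lambda_k$, the bound $\hat H_k(s-)\ge\eta/2$, and Duhamel plus integration by parts.

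Three things need fixing.

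\textbf{Step~1 constants.} As you make it concrete, Step~1 does not give $\varepsilon$-free constants. The union bound over an $O(1/\varepsilon)$ grid yields $(c/\varepsilon)e^{-k\varepsilon^2/2}$. The prefactor can be absorbed only when $k\varepsilon^2\gtrsim\log(1/\varepsilon)$, whereas $k\ge 864/(\varepsilon\eta^5)$ only gives $k\varepsilon^2\ge 864\varepsilon/\eta^5$, which tends to $0$ with $\varepsilon$. Take the concentration route you mention instead:
\begin{itemize}
\item $Z=\sup_t|\hat H_k(t)-\bar H_k(t)|$ changes by at most $1/k$ when one pair is changed, so McDiarmid gives $\pr(Z\ge\mathbb{E}Z+u)\le e^{-2ku^2}$.
\item Symmetrization (valid for independent, non-identically distributed pairs) and Doob's maximal inequality for the Rademacher partial sums along the nested sets $\{i\colon T_i>t\}$ give $\mathbb{E}Z\le 4/\sqrt{k}$.
\item Together these give $\pr(Z\ge\varepsilon)\le e^{32}e^{-k\varepsilon^2/2}$ for every $\varepsilon>0$, and likewise for $\hat H^1_k$.
\end{itemize}

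\textbf{No $1/k$ remainder.} Integrate by parts exactly, using that $\hat H_k(s-)^{-1}$ is the left limit of the right-continuous $\hat H_k^{-1}$. This gives
\[
\sup_{t\le\tau}|\hat\Lambda_k-\bar\Lambda_k|\lesssim\eta^{-1}\|\hat H^1_k-\bar H^1_k\|_\infty+\eta^{-2}\|\hat H_k-\bar H_k\|_\infty .
\]
Write the Duhamel kernel as $\prodi_{(s,t]}(1-\dd\bar\Lambda_k)$, which lies in $[0,1]$ and is monotone in $s$. This transfers the bound to $\sup_{t\le\tau}|\hat{\amsmathbb{F}}_k-\bar F_k|$ with a universal factor, so you do not even need $1-\bar F_k\ge\bar H_k$. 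Your argument therefore needs no lower bound on $k$ at all, and there is no reason to reverse-engineer the $864$.

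\textbf{Dependence of $d_2$ on $\eta$.} Your $d_2$ depends on $\eta$, so it is not universal, but no proof can do better. Take $F_i=F$ continuous with $F(\tau)=1/2$, and let each $C_i$ equal $0$ with probability $1-2\eta$ and $+\infty$ otherwise. Then $H_i(\tau)=\eta$, and $\hat{\amsmathbb{F}}_k$ is the empirical CDF of about $2\eta k$ observations. Its deviation probability at $\tau$ is bounded below by roughly $e^{-c\eta k\varepsilon^2}$, so $d_2\lesssim\eta$ is forced. The word ``universal'' in the statement can therefore only mean ``depending on $\eta$ alone''. This is harmless for \cref{th:sidr_plain_consistency}, whose constant already depends on $\eta$.
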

\begin{proof}
    First, assume that $\varepsilon \in (0, 1)$. The result follows from Theorem 2.1 of \citet{dabrowskaUniformConsistencyKernel1989}, by choosing a suitable distribution for $Z$, conditional sub-survival and survival functions, $K$, and $a_n$. Let
    \[
        I = \Bigl[-\frac{1}{4}, \frac{1}{4}\Bigr], \qquad
        \delta = \frac{3}{4}, \qquad
        a_n = \frac{1}{2}, \qquad
        K(u) = \frac{1}{2}\ind{|u|<1},
    \]
    and let $Z$ be uniformly distributed on $[-1,1]$, so that $g(z)\equiv 1/2$ on $I_\delta=[-1,1]$. The total variation of $K$ is $1$. Partition $[-1/4,1/4)$ into
    \[
        J_i
        :=
        \Bigl[-\frac{1}{4}+\frac{i-1}{2k},\, -\frac{1}{4}+\frac{i}{2k}\Bigr),
        \qquad i=1,\dots,k,
    \]
    and extend this partition $(1/2)$-periodically to $[-3/4,3/4)$. In the notation of \citet{dabrowskaUniformConsistencyKernel1989}, define
    \[
        \widetilde{H}_1(t \mid z) = H_i^1(t),
        \qquad
        \widetilde{H}_2(t \mid z) = H_i(t)
        \quad \text{for } z \in J_i + \frac{m}{2},\ \ m \in \{-1,0,1\},
    \]
    and set $\widetilde{H}_1(t \mid z) = H_1^1(t)$, $\widetilde{H}_2(t \mid z) = H_1(t)$ for $z \in [-1,-3/4) \cup [3/4,1]$. Then, for every $z \in I$,
    \begin{align*}
        \widetilde{H}_{1n}(t,z)
        &=
        a_n^{-1}\int_{-1}^1 \widetilde{H}_1(t \mid u) K(a_n^{-1}(z-u))\,\dd G(u) \\
        &=
        \frac{1}{2}\int_{z-1/2}^{z+1/2} \widetilde{H}_1(t \mid u)\,\dd u
        =
        \frac{1}{2k}\sum_{i=1}^k H_i^1(t),
    \end{align*}
    because $\widetilde{H}_1(t \mid \cdot)$ is $(1/2)$-periodic on $\bigl[-3/4,3/4\bigr)$, and each value $H_i^1(t)$ occupies total Lebesgue measure $1/k$ on any interval of length $1$. Similarly,
    \[
        \widetilde{H}_{2n}(t,z)
        =
        \frac{1}{2}\int_{z-1/2}^{z+1/2} \widetilde{H}_2(t \mid u)\,\dd u
        =
        \frac{1}{2k}\sum_{i=1}^k H_i(t).
    \]
    Moreover,
    \[
        g_n(z)
        =
        a_n^{-1}\int_{-1}^1 K(a_n^{-1}(z-u))\,\dd G(u)
        =
        \frac{1}{2}.
    \]
    Hence the normalised quantities of \citet{dabrowskaUniformConsistencyKernel1989} satisfy
    \[
        \widetilde{H}_1(t,z) = \bar{H}_k^1(t),
        \qquad
        \widetilde{H}_2(t,z) = \bar{H}_k(t),
    \]
    so the corresponding deterministic target distribution is exactly $\bar{F}_k$. The assumptions of Theorem 2.1 of \citet{dabrowskaUniformConsistencyKernel1989} are satisfied with this construction, and the lower bound on $k$ reduces to $k \geq 864/(\varepsilon\eta^5)$, since $a_n=\gamma=1/2$ and $\lambda=1$. The constants $d_1$ and $d_2$ of \citet{dabrowskaUniformConsistencyKernel1989} are absorbed into universal constants after the fixed rescaling by $a_n^{-1}=2$ and $a_n=1/2$.

    We now treat the remaining case $\varepsilon \ge 1$. Both $\hat{\amsmathbb{F}}_k(t)$ and $\bar{F}_k(t)$ are bounded between $0$ and $1$, leaving just the case $\varepsilon = 1$, achievable only when $\bar{F}_k(t) = 0$ and $\hat{\amsmathbb{F}}_k(t) = 1$ (due to $t \le \tau$). But $\bar{F}_k(t) = 0$ implies $F_i(t) = 0$ for $i = 1, \ldots, k$, so we must also have $\hat{\amsmathbb{F}}_k(t) = 0$.
\end{proof}

Let
\[
	M_n := \max_{1 \leq r \leq s \leq m} \sqrt{|O_{r:s}|} \sup_{y \leq \tau}| \KM{r:s}(y) - \bar{F}_{r:s}(y)|,
\]
where $\bar{F}_{r:s}(y)$ is defined as the mixture \eqref{eq:fbar_k}, with the $|O_{r:s}|$ (sub-)survival functions
\[
    H_i^1(t) = \pr(T_i \geq t, \Delta_i = 1 \mid X_i), \quad H_i(t) = \pr(T_i \geq t \mid X_i), \quad i \in O_{r:s}.
\]
Then for any constant $K > 2/d_2$:
\[
	\lim_{n \rightarrow \infty}\pr\left(M_n \leq \log(n)^{1/2}\max\{K^{1/2}, 864 \eta^{-5}\}\right) = 1.
\]
To see this, let $1 \leq r \leq s \leq m$. Define $\kappa_n = \log(n)^{1/2}\max\{K^{1/2}, 864 \eta^{-5}\}$. We apply \cref{th:dabrowska_adaptation} with
\[
    \varepsilon_n = |O_{r:s}|^{-1/2}\kappa_n.
\]
To apply the theorem, we need that
\[
    |O_{r:s}| \geq \frac{864}{\eta^5\varepsilon_n} \iff \sqrt{|O_{r:s}|\log(n)} \geq \frac{864}{\eta^{5}\max\left(K^{1/2}, 864\eta^{-5}\right)},
\]
which holds for all $n \geq 3$, since the right-hand side is bounded from above by $1$. Then,
\begin{align*}
    \pr(M_n > \kappa_n) &\leq \sum_{1 \leq r \leq s \leq m} \pr(\sqrt{|O_{r:s}|} \sup_{y \leq \tau} |\KM{r:s}(y) - \bar{F}_{r:s}(y)| > \kappa_n) \\
    &\leq \frac{n(n+1)}{2} d_1 \exp\left(-d_2\kappa_n^2\right) \\
    &\leq \exp\left(-d_2K\log(n)+2\log(n+1)\right)/2,
\end{align*}
and the upper bound converges to zero as $n \rightarrow \infty$ due to the definition of $K$. For consistency under the hazard rate order assumption, we apply the following result.
\begin{proposition}[Ordering property of \(\bar{F}\)]\label{prop:order}
Assume that each $F_i$ is absolutely continuous and hazard-rate ordered,
\[
    F_1 \lehro F_2 \lehro \cdots \lehro F_k,
\]
that is, the hazard rates satisfy
\[
    h_1(t) \geq h_2(t) \geq \cdots \geq h_k(t)
    \qquad \text{for all } t \geq 0.
\]
Then, for every $j=1, \ldots, k$,
\[
    F_1 \leso \bar{F}_j \leso F_j \leso F_k.
\]
\end{proposition}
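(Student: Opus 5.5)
The plan is to work with cumulative hazards and show that the hazard of the mixture target $\bar{F}_j$ is a weighted average of the individual hazards $h_1,\dots,h_j$. Since these are ordered, the average lies between $h_j$ and $h_1$. Under \cref{cond:noninformative_censoring}, absolute continuity gives
\[
\dd H_i^1(t) = -(1-G_i(t-))f_i(t)\,\dd t, \qquad H_i(t) = (1-F_i(t))(1-G_i(t)),
\]
where $G_i$ is the censoring CDF. Substituting into \eqref{eq:fbar_k} yields, for $t\le\tau$ (where $\bar{H}_j(t-)>0$),
\[
\dd\bar{\Lambda}_j(t) = \bar{h}_j(t)\,\dd t, \qquad \bar{h}_j(t) = \sum_{i=1}^j w_i(t)\,h_i(t),
\]
with weights
\[
w_i(t) = \frac{(1-F_i(t))(1-G_i(t-))}{\sum_{l=1}^j (1-F_l(t))(1-G_l(t-))}.
\]
The weights are nonnegative and sum to one. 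Because $\bar{\Lambda}_j$ is then continuous, the product integral reduces to $1-\bar{F}_j(t)=\exp(-\int_0^t \bar{h}_j)$, and likewise $1-F_i(t)=\exp(-\int_0^t h_i)$.

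Next I use the hazard ordering $h_1\ge\cdots\ge h_j$. Since each $\bar{h}_j(t)$ is a convex combination of $h_1(t),\dots,h_j(t)$, we get $h_j(t)\le \bar{h}_j(t)\le h_1(t)$ pointwise. Integrating and exponentiating gives
\[
1-F_1(t)\le 1-\bar{F}_j(t)\le 1-F_j(t),
\]
which is $F_1\leso\bar{F}_j\leso F_j$. In fact this is even hazard-rate ordering. The last relation $F_j\leso F_k$ follows directly from $h_j\ge h_k$ by the same exponential representation. It is also the standard implication that the hazard rate order implies the stochastic order.

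The main obstacle is at times $t$ where the weights are undefined, namely where $\sum_l H_l(t-)=0$. These times lie outside the relevant range $t\le\tau$ under \cref{cond:positive_probability_uncensored}. Elsewhere I would define $\bar{\Lambda}_j$ by continuity or stop the product integral there, and check that the comparison still holds, for instance by convention on $\{\bar{H}_j=0\}$.

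A second point needing care is atoms of $G_i$. Jumps in $G_i$ do not create jumps in $\bar{\Lambda}_j$, because $\dd H_i^1$ only charges the absolutely continuous part through $f_i$. This is why the left limit $G_i(t-)$ appears in the weights, and it keeps $\bar{\Lambda}_j$ absolutely continuous.
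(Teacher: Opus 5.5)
Your proposal is correct and follows the paper's proof essentially verbatim. Write $\dd\bar\Lambda_j$ as an at-risk-weighted convex combination of $h_1,\dots,h_j$, sandwich it between $h_j$ and $h_1$, then integrate and exponentiate. Your left limits $G_i(t-)$ in the weights are slightly more precise than the paper's $G_i(t)$, but the two agree Lebesgue-almost everywhere. On your ``main obstacle'', restricting to $t\le\tau$, where $\bar H_j(t-)\ge\eta$, is not just convenient but necessary: no convention on $\{\bar H_j=0\}$ can save $\bar F_j\leso F_j$ beyond the censoring support, where $\bar F_j$ freezes while $F_j$ keeps increasing (already for $j=1$, a standard exponential $F_1$ with $C_1\equiv 1$ gives $\bar F_1(t)=1-e^{-1}<F_1(t)$ for $t>1$), a restriction the paper's proof leaves implicit and which is all that is used later, where only $y\le\tau$ matters.
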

\begin{proof}
Let $1 - G_i(t) := \pr(C_i > t)$. Under noninformative censoring,
\[
    H_i(t) = (1-G_i(t))(1-F_i(t)),
    \qquad
    -\dd H_i^1(t) = (1-G_i(t))f_i(t)\,\dd t = H_i(t)h_i(t)\,\dd t.
\]
Hence
\[
    \dd \bar{\Lambda}_j(t)
    =
    -\frac{\dd \bar{H}_j^1(t)}{\bar{H}_j(t)}
    =
    \frac{\sum_{i=1}^j H_i(t)h_i(t)}{\sum_{i=1}^j H_i(t)}\,\dd t.
\]
Thus the integrand of $\bar{\Lambda}_j$ is a weighted average of the individual hazards, with weights proportional to $H_i(t)$:
\[
    \frac{\dd \bar{\Lambda}_j(t)}{\dd t}
    =
    \sum_{i=1}^j w_{i,j}(t) h_i(t),
    \qquad
    w_{i,j}(t) := \frac{H_i(t)}{\sum_{\ell=1}^j H_\ell(t)},
    \qquad i=1,\dots,j.
\]
Since $w_{i,j}(t)\geq 0$ and $\sum_{i=1}^j w_{i,j}(t)=1$, this convex combination lies between the smallest and largest of $h_1(t),\dots,h_j(t)$. Therefore,
\[
    h_1(t) \geq \frac{\dd \bar{\Lambda}_j(t)}{\dd t} \geq h_j(t)
    \qquad \text{for all } t \geq 0.
\]
Integrating over $[0,t]$ yields
\[
    \Lambda_1(t) \geq \bar{\Lambda}_j(t) \geq \Lambda_j(t).
\]
Because the $F_i$ are absolutely continuous,
\[
    1-F_i(t) = \exp\{-\Lambda_i(t)\},
    \qquad
    1-\bar{F}_j(t) = \exp\{-\bar{\Lambda}_j(t)\},
\]
and hence
\[
    F_1(t) \geq \bar{F}_j(t) \geq F_j(t).
\]
The final claim follows from $h_j(t)\geq h_k(t)$, which implies $F_j(t)\geq F_k(t)$ for all $t\geq 0$.
\end{proof}

To prove consistency and the convergence rates, we assume that the estimator is well-defined and that the event in \cref{cond:dense_covariates} occurs. We follow the same steps as in the proof of Theorem 3.3 of \citet{moschingMonotoneLeastSquares2020}. Let $x \in \mathbf{I}_n$, and recall that $\delta_n = C_3 \rho_n^{1/(1+2\alpha)}$. Let $\hat{F}_x$ be any interpolation of $\hat{F}_{x_i}$ and $\hat{F}_{x_{i+1}}$, where $i$ is the unique index in $\{1, \dots, m\}$ such that $\xi_i < x < \xi_{i+1}$, which exists with asymptotic probability one. Let
\begin{align*}
    r(x) & = \min\{k \in \{1, \dots, m\}\colon \xi_k \geq x - \delta_n\}, \\
    j(x) & = \max\{k \in \{1, \dots, m\}\colon \xi_k \leq x\}.
\end{align*} 
Due to \cref{cond:dense_covariates}, these indices are defined with asymptotic probability one, and there are at least $C_2n\delta_n$ observations contained in $[x-\delta_n, x]$. We then have, for any $y \leq \tau$,
\begin{align*}
    \hat{F}_x(y) - F_x(y) & \leq \hat{F}_{x_{j(x)}}(y) - F_x(y) \tag{monotonicity interpolation}\\
    & = \min_{r \leq j(x)} \max_{s \colon s \geq j(x)} \KM{r:s}(y) - F_x(y) \tag{definition \eqref{eq:sidr_plain}}\\
    & \leq \max_{s \geq j(x)} \KM{r(x):s}(y) - F_x(y) \tag{choose $r = r(x)$}\\
    & \leq \frac{M_n}{\sqrt{|O_{r(x):j(x)}|}} + \max_{s \geq j(x)} \bar{F}_{r(x):s}(y) - F_x(y)\\
    & \leq \frac{M_n}{\sqrt{C_2 n \delta_n}} + F_{\xi_{r(x)}}(y) - F_x(y) \\
    & \leq \frac{M_n}{\sqrt{C_2 n \delta_n}} + C_1 \delta_n^\alpha. \tag{H\"older continuity of $F_x$}
\end{align*}
In the fourth step, we add and subtract $\bar{F}_{r(x):s}$ within the maximum, use the definition of $M_n$, and the fact that $|O_{r(x):s}|$ is increasing in $s$. In the fifth step, we use the fact that $|O_{r(x):j(x)}| \geq C_2n\delta_n$ and \cref{prop:order}, where the latter is applicable due to \cref{cond:hazard_rate_order}.

Now we can use that $M_n \leq \log(n)^{1/2}\max(K^{1/2}, 864 \eta^{-5})$ with asymptotic probability one for $K > 2/d_2$, and the upper bound then becomes $C \rho_n^{\alpha / (1+2\alpha)}$ with $C = \max(K^{1/2}, 864 \eta^{-5})/(C_2C_3)^{1/2} + C_1 C_3^\alpha$. An analogous computation shows that the same bound holds for $F_x - \hat{F}_x$. \qed

\section{Cauchy mean value violation of Kaplan--Meier estimator}\label{a:cmv_violation}
We can choose stochastically ordered CDFs $F_i$ and paired censoring distributions $G_i$ such that the Kaplan--Meier estimator evaluated on a pooled sample converges to a $\bar{F}$ outside of the range of the $F_i$, and even violates the Cauchy mean value (CMV) property (\cref{def:cmv}) arbitrarily severely (up to a distance of $1$ between CDFs). Choose $F_1$ and $F_2$ supported on disjoint, interleaved unit intervals across $[0, 4n]$. The geometric censoring schedule $\varepsilon^k$ is such that within each cycle a fraction $1 - O(\varepsilon)$ of the at-risk pool is censored before its events occur --- in particular $G_2$ removes most of group 2 in the very first cycle. The Kaplan--Meier limit $\bar{F}$ stalls at the small mass observed in the early cycles while $F_2$ continues climbing to $1$, so $\max_y\,[F_2(y) - \bar{F}(y)]$ approaches $1$.

For the uniform distribution on an interval $[a,b] \subset \R$, let $\Ucdf{a}{b} := \pr(V \le y)$, $V \sim \mathrm{Unif}[a,b]$, denote the CDF at $y \in \R$. Then we set for small positive $\varepsilon$ and $\delta$, and an integer $n \geq 2$:
\begin{align*}
    F_1(y) &= \frac{1-\delta}{n}\sum_{i=1}^{n} \Ucdf{4(i-1)}{4(i-1)+1} + \delta \Ucdf{4n-1}{4n},\\
    F_2(y) &= \frac{1-\delta}{n}\sum_{i=1}^{n} \Ucdf{4(i-1)+2}{4(i-1)+3} + \delta \Ucdf{4n-1}{4n},\\
    G_1(y) &= \sum_{i=1}^{n-1}\!\bigl(\varepsilon^{2(i-1)} - \varepsilon^{2i}\bigr)  \Ucdf{4(i-1)+3}{4(i-1)+4} + \varepsilon^{2(n-1)} \Ucdf{4n-1}{4n},\\
    G_2(y) &= (1-\varepsilon) \Ucdf{1}{2} + \sum_{i=2}^{n}\!\bigl(\varepsilon^{2(i-1)-1} - \varepsilon^{2(i-1)+1}\bigr)  \Ucdf{4(i-1)+1}{4(i-1)+2} \\
    &\qquad + \varepsilon^{2(n-1)+1} \Ucdf{4n-1}{4n}.
\end{align*}
Then, for all $y \in \Rp$, $F_1(y) \geq F_2(y)$, so that $F_1$ and $F_2$ are stochastically ordered. With the Kaplan--Meier limit $\bar{F}$ defined as in \eqref{eq:fbar_k}, the iterated limit (first $\varepsilon \to 0$, then $n \to \infty$, with $\delta \to 0$ taken at any stage) yields $\max_{y \in \Rp} \left[F_2(y) - \bar{F}(y)\right] \to 1$.
\begin{figure}[h]
    \begin{tikzpicture}[
        alt={Line plot with horizontal axis corresponding to the response / time, vertical axis to the CDF value. Shown are three lines, moving from the bottom-left corner to the top-right, with F1 and F2 alternatingly increasing linearly and being constant. Another line, corresponding to the limit object F bar of the Kaplan-Meier estimator on the mixed sample, briefly is the mean of F1 and F2 but then increases only very slowly and falls far below the other two.}
    ]
        \node[inner sep=0] (img) {\includegraphics[width=0.45\textwidth]{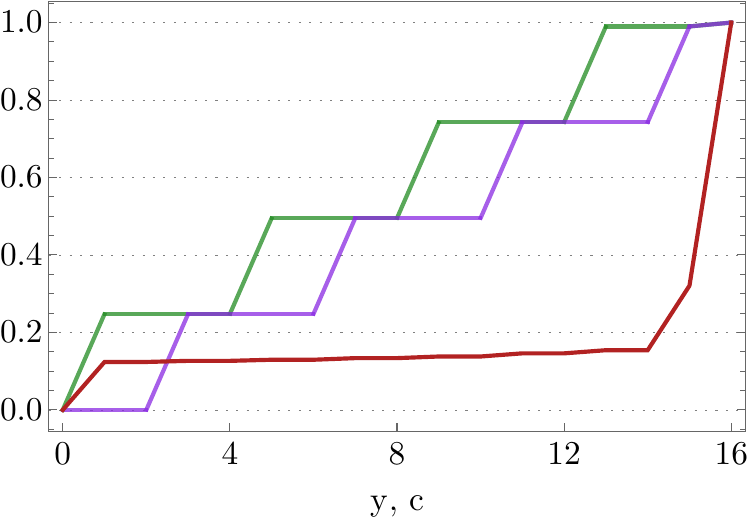}};

        \draw[fill=white, draw=black, line width=0.5pt] ($(img.north west)+(0.6cm,-0.05cm)$) rectangle ++(4.6, -1);
        \draw[ForestGreen, opacity=0.75, line width=1.5pt] ($(img.north west)+(0.75cm,-0.3cm)$) -- ++(0.5, 0);
        \node[right] at ($(img.north west)+(0.75cm+0.5cm,-0.3cm)$) {$F_1$};
        \draw[BlueViolet, opacity=0.75, line width=1.5pt] ($(img.north west)+(0.75cm+2.2cm,-0.3cm)$) -- ++(0.5, 0);
        \node[right] at ($(img.north west)+(0.75cm+2.2cm+0.5cm,-0.3cm)$) {$F_2$};
        \draw[BrickRed, line width=1pt] ($(img.north west)+(0.75cm,-0.3cm-0.5cm)$) -- ++(0.5, 0) node[right, black] {Mixture $\bar{F}$ of $F_1$ and $F_2$};
    \end{tikzpicture}
    \hfill
    \begin{tikzpicture}[
        alt={Line plot with horizontal axis corresponding to the response / time, vertical axis to the log of 1 minus the CDF value. Shown are two lines, moving from the top-left corner to the bottom-right, with G1 and G2 alternatingly decreasing.}
    ]
        \node[inner sep=0] (img) {\includegraphics[width=0.46\textwidth]{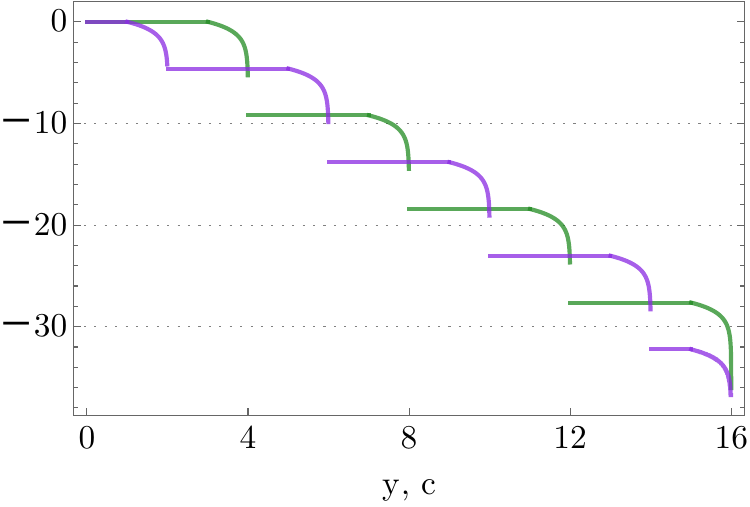}};

        \draw[fill=white, draw=black, line width=0.5pt] ($(img.south west)+(0.85cm,1cm)$) rectangle ++(2.75, 1);
        \draw[ForestGreen, opacity=0.75, line width=1.5pt] ($(img.south west)+(1cm,1.75cm)$) -- ++(0.5, 0);
        \node[right] at ($(img.south west)+(1.5cm,1.75cm)$) {$\log(1 - G_1)$};
        \draw[BlueViolet, opacity=0.75, line width=1.5pt] ($(img.south west)+(1cm,1.75cm-0.5cm)$) -- ++(0.5, 0);
        \node[right] at ($(img.south west)+(1.5cm,1.75cm-0.5cm)$) {$\log(1 - G_2)$};
    \end{tikzpicture}
    \caption{The CDFs $F_1$, $F_2$, $G_1$, $G_2$ chosen as above for $n = 4$, $\varepsilon = 10^{-2}$ and $\delta = 10^{-2}$, and the resulting $\bar{F}$ (see \cref{a:plain_survival_idr_consistency}). Left panel: Distributions of interest with mixture $\bar{F}$ deviating outside the range of $F_1$ and $F_2$. Right panel: Censoring/nuisance distributions $G_1$, $G_2$ alternately change the weighting of CDFs $F_1$, $F_2$.}
\end{figure}

\section{Theoretical details and proofs for S-IDR}\label{a:sidr_proofs}
\subsection{Well-definedness: Proof of \cref{prop:sidr_well_defined}}\label{a:sidr_well_defined}
We prove \Cref{prop:sidr_well_defined} by induction on the index interval length $s - r + 1$ with $1 \leq r \leq s \leq m$.

\begin{proof}
    We fix any $y \in \Rp$ and show \eqref{eq:max_min_inequality} by induction over the interval length $s-r+1$. Equation \eqref{eq:max_min_inequality} holds for all index intervals of length $s - r + 1 = 2$, because $\mathcal{P}_{r:s} = \{ P \}$ with $P = \{ [r : r], [s : s] \}$. Then
    \[
        \min_{I \in P} \RKM{I}(y) = \min_{I \in P} \KM{I}(y) \leq \max_{I \in P} \KM{I}(y) = \max_{I \in P} \RKM{I}(y)
    \]
    because the left-hand side $\max$-$\min$ reduces to a single $\min$, and vice versa for the right-hand side.
    
    We now assume that \eqref{eq:max_min_inequality} holds for all intervals $J = [r : s]$ with $2 \le |J| = s - r + 1 < m \in \amsmathbb{N}$ and that it \emph{does not} hold for some interval of length $m$, that is, there exist partitions $P, P' \in \mathcal{P}_{r:s}$ with $m = s - r + 1$ such that
    \begin{equation}\label{eq:sidr_well_definedness_violation}
        \max_{I \in P} \RKM{I}(y) < \min_{I \in P'} \RKM{I}(y).
    \end{equation}
    Note that we must have $P \cap P' = \emptyset$.

    By our induction hypothesis, we can assume that $P$ and $P'$ were chosen such that $|P| = |P'| = 2$. To see why, consider merging adjacent partition elements $I_l, I_r \in P$ while $|P| > 2$ (we could also have chosen $P'$) and write $K := I_l \cup I_r$. Because $|K| < m$ we have
    \[
        \RKM{K}(y) \leq \max \{ \RKM{I_l}(y), \RKM{I_r}(y) \} < \min_{I \in P'} \RKM{I}(y).
    \]
    Similarly, merging elements in $P'$ maintains inequality \eqref{eq:sidr_well_definedness_violation}.

    Denote the ``split points'' of $P, P'$ by $k, l$ respectively such that $P = \{ [r:k], [k+1:s] \}$ and $P' = \{ [r:l], [l+1:s] \}$ and assume without loss of generality that $k < l$. Then by our induction hypothesis
    \begin{align*}
        \min \{ \RKM{r:k}(y), \RKM{k+1:l}(y) \} &\leq \RKM{r:l}(y) \leq \max \{ \RKM{r:k}(y), \RKM{k+1:l}(y) \},\\
        \min \{ \RKM{k+1:l}(y), \RKM{l+1:s}(y) \} &\leq \RKM{k+1:s}(y) \leq \max \{ \RKM{k+1:l}(y), \RKM{l+1:s}(y) \},
    \end{align*}
    and so using \eqref{eq:sidr_well_definedness_violation} we should have
    \begin{align*}
        &\RKM{r:k}(y) < \RKM{r:l}(y) \leq \RKM{k+1:l}(y),\tag{$[r:k] \in P$, $[r:l] \in P'$}\\
        &\RKM{k+1:l}(y) \leq \RKM{k+1:s}(y) < \RKM{l+1:s}(y).\tag{$[k+1:s] \in P$, $[l+1:s] \in P'$}
    \end{align*}
    Combining these, we find that
    \[
        \RKM{r:l}(y) \leq \RKM{k+1:l}(y) \leq \RKM{k+1:s}(y),
    \]
    which contradicts our assumption \eqref{eq:sidr_well_definedness_violation} because $[k+1:s] \in P$ and $[r:l] \in P'$, so our induction closes.
\end{proof}

\subsection{Equivalent quantile perspective: Proof of \cref{prop:sidr_characteristics}, property \emph{(iii)}}\label{aa:quantile_perspective}
The quantile formulation of S-IDR relies on the following proposition.

\begin{proposition}\label{prop:quantile_perspective}
The quantile function of the self-consistent Kaplan--Meier estimator satisfies $\tilde{q}_{r:s}(\alpha) = \RKM{r:s}^{-1}(\alpha) := \inf \{ z : \RKM{r:s}(z) \ge \alpha \}$. That is, for all $\alpha \in (0,\lim_{y \to \infty} \RKM{r:s}(y)]$,
\begin{equation} \label{eq:quantile_km}
    \RKM{r:s}(\tilde{q}_{r:s}(\alpha)-) < \alpha \leq \RKM{r:s}(\tilde{q}_{r:s}(\alpha)),
\end{equation}
and $\tilde{q}_{r:s}(\alpha) = \infty$ for $\alpha > \lim_{y \to \infty} \RKM{r:s}(y)$.
\end{proposition}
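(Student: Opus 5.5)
We argue by strong induction on the interval length $s-r+1$, proving the statement $\tilde{q}_{r:s} = \RKM{r:s}^{-1}$ on all of $[0,1]$ (with $\inf\emptyset=\infty$). Recall the Galois-connection characterisation of generalised inverses: for a right-continuous nondecreasing $G$, $G^{-1}(\alpha) \le z \iff \alpha \le G(z)$. Then \eqref{eq:quantile_km} and the value $\infty$ beyond the supremum are immediate consequences once we know $\tilde{q}_{r:s}$ is the generalised inverse of the right-continuous sub-CDF $\RKM{r:s}$. So the plan has two steps. First, check that $\RKM{r:s}$ is a sub-CDF. Second, show that taking generalised inverses commutes with each operation in the recursion \eqref{eq:sidr_clamp}.

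\textbf{Step 1: $\RKM{r:s}$ is a sub-CDF.} The base case $r=s$ is the Kaplan--Meier estimator, where $\tilde{q}_{r:r}=\KM{r:r}^{-1}$ by definition. For the inductive step, suppose all strict sub-intervals $I\subsetneq[r:s]$ satisfy $\tilde{q}_I=\RKM{I}^{-1}$ and each $\RKM{I}$ is a sub-CDF. The functions $y\mapsto \max_P\min_{I\in P}\RKM{I}(y)$ and $y\mapsto \min_P\max_{I\in P}\RKM{I}(y)$ are finite max/min combinations of nondecreasing right-continuous functions, hence nondecreasing and right-continuous. By \cref{prop:sidr_well_defined}, the lower bound does not exceed the upper bound, so the clamp $\clamp(\KM{r:s}(y),L(y),U(y))=\max\{L(y),\min\{\KM{r:s}(y),U(y)\}\}$ is again nondecreasing, right-continuous, and between $0$ and $1$.

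\textbf{Step 2: inversion commutes with the recursion.} For nondecreasing right-continuous $G_1,G_2$, one has
\[
(\min\{G_1,G_2\})^{-1}=\max\{G_1^{-1},G_2^{-1}\}, \qquad (\max\{G_1,G_2\})^{-1}=\min\{G_1^{-1},G_2^{-1}\}.
\]
Both identities follow from the Galois connection: $\alpha\le\min\{G_1,G_2\}(z)$ holds iff $\alpha\le G_1(z)$ and $\alpha\le G_2(z)$, which holds iff $z\ge \max\{G_1^{-1}(\alpha),G_2^{-1}(\alpha)\}$. These identities extend to finite families and compose, so $\max_P\min_I$ on CDFs corresponds to $\min_P\max_I$ on quantiles, and vice versa. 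Consequently $L^{-1}=\min_P\max_I\tilde{q}_I$ and $U^{-1}=\max_P\min_I\tilde{q}_I$, using the induction hypothesis. Applying the same identities to $\max\{L,\min\{\KM{r:s},U\}\}$ gives
\[
\min\{L^{-1},\max\{\hat{q}_{r:s},U^{-1}\}\}.
\]
Since $L\le U$ pointwise implies $U^{-1}\le L^{-1}$, this expression equals $\clamp(\hat{q}_{r:s},U^{-1},L^{-1})$. That is exactly the recursion defining $\tilde{q}_{r:s}$, with the roles of the bounds swapped as they should be. Edge cases, namely $\alpha$ above the supremum and infinite quantiles, are handled uniformly by the convention $\inf\emptyset=\infty$.

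\textbf{Main obstacle.} The main obstacle is making the min/max inversion identities rigorous for sub-CDFs, including the values $\infty$ and the boundary $\alpha=0$. The clean route is to state one lemma — $G^{-1}(\alpha)\le z\iff \alpha\le G(z)$ for all $z\in\Rp$ and $\alpha\in(0,1]$ — and derive every identity from it pointwise in $\alpha$, so that no separate case analysis is needed.
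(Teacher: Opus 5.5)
Your argument is correct, and it takes a genuinely different and more conceptual route than the paper.

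\textbf{How the paper argues.} It also inducts on $s-r+1$. It first reduces to non-defective estimators by a truncation device: each $\KM{I}$ is redefined to equal $1$ beyond a large $K$. It then fixes $y=\hat q_{r:s}(\alpha)$ and runs a three-way case analysis on which clamp bound is active for $\RKM{r:s}(y)$. In each case it identifies the active bound for $\tilde q_{r:s}(\alpha)$. It then verifies $\RKM{r:s}(\tilde q_{r:s}(\alpha))\ge\alpha$ and $\RKM{r:s}(\tilde q_{r:s}(\alpha)-\varepsilon)<\alpha$ through explicit inequality chains in the binary-split form of \cref{lem:sidr_one_step_recursion}.

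\textbf{How you argue, and what it buys.} You isolate the single structural fact behind all of this. Via the Galois connection, generalised inversion is an order-reversing lattice map on right-continuous nondecreasing functions. So it carries the finite $\max$/$\min$ expression $\max\{L,\min\{\KM{r:s},U\}\}$ onto $\min\{L^{-1},\max\{\hat q_{r:s},U^{-1}\}\}$, pointwise in $\alpha$. This gives you several things at once:
\begin{itemize}
\item no case distinctions;
\item no need for \cref{lem:sidr_one_step_recursion}, since you work with all partitions directly;
\item defective sub-CDFs are handled by $\inf\emptyset=\infty$ instead of the $K$-truncation;
\item well-definedness of the quantile clamp, $U^{-1}\le L^{-1}$, follows from $L\le U$; the paper only asserts this by analogy.
\end{itemize}
In exchange, the paper's route carries the strict left-limit inequality explicitly through its induction.

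\textbf{One point to tighten.} The displayed inequalities are not immediate from $\tilde q_{r:s}=\RKM{r:s}^{-1}$ for a general sub-CDF.
\begin{itemize}
\item Right-continuity and monotonicity give only $\RKM{r:s}(\tilde q_{r:s}(\alpha)-)\le\alpha$. For a continuous CDF, equality holds and the strict inequality fails.
\item For $\alpha=\lim_{y\to\infty}\RKM{r:s}(y)$, you need the limit to be attained. Otherwise $\tilde q_{r:s}(\alpha)=\infty$ and the upper inequality is meaningless.
\end{itemize}
Both follow once Step~1 also records that every $\KM{I}$ is piecewise constant. Its jumps occur only at the finitely many observed times, and it is constant beyond the largest one. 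Finite $\max$/$\min$ preserve this property, so it passes to $L$, $U$ and $\RKM{r:s}$.
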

The quantile formulation of S-IDR is well-defined and has the equivalent representation
\[
    \tilde{q}_{r:s}(\alpha) = \min_{r \leq k < s} \max\{\tilde{q}_{r:k}(\alpha), \tilde{q}_{k+1:s}(\alpha)\};
\]
this holds by proofs analogous to those for \cref{prop:sidr_well_defined} and \cref{lem:sidr_one_step_recursion}, which do not make use of any special properties of $\RKM{r:s}$.

\begin{proof}[Proof of \cref{prop:quantile_perspective}]
The proof is by induction over the interval length $s - r + 1$. Equation \eqref{eq:quantile_km} holds by definition for $r=s$. The induction assumption states that
\[
    \RKM{I}(\tilde{q}_{I}(\alpha)) \geq \alpha > \RKM{I}(\tilde{q}_{I}(\alpha)-)
\]
for $\alpha \in (0,\lim_{y \to \infty} \RKM{I}(y)]$ and $\tilde{q}_{I}(\alpha) = \infty$ for $\alpha > \lim_{y \to \infty} \RKM{I}(y)$, for all index intervals $I \subset \{r, \dots, s\}$. Throughout the proof, we define $y$ as
\begin{equation} \label{eq:quantile_km_y}
    y = \KM{r:s}^{-1}(\alpha) = \hat{q}_{r:s}(\alpha),
\end{equation}
and we repeatedly use the fact that $\KM{r:s}(y) \geq \alpha$ for $\alpha \leq \lim_{t \rightarrow \infty} \KM{r:s}(t)$. The proof requires careful treatment of inequalities. The intuition is that the inequalities in \eqref{eq:quantile_km} can be obtained only if an upper bound in the clamping for $\RKM{r:s}$ is combined with a lower bound in the clamping for $\tilde{q}_{r:s}$, or vice versa, and several case distinctions are necessary to ensure that this happens.

\bigskip\noindent
\textbf{Treatment of $\lim_{t \to \infty} \RKM{r:s}(t) < 1$:} We first treat the case $1 \geq \alpha > \lim_{t \to \infty} \RKM{r:s}(t)$. Define
\[
    {}_K\KM{I}(t) = \begin{cases}
        \KM{I}(t), & \ t \leq K, \\
        1, & \ t \geq K,
    \end{cases}
\]
for any index interval $I \subseteq \{r, \dots, s\}$ and some $K > \max\{T_1, \dots, T_n\}$, where we recall that $T_1, \dots, T_n$ are the observed survival times. That is, if the limit of the Kaplan--Meier estimator is strictly less than $1$, we set it to $1$ at a large $K$. Define ${}_K \RKM{I}$ and ${}_K \tilde{q}_I$ as the clamped Kaplan--Meier estimator and its quantile version, with this modification of $\KM{I}$. We then have ${}_K\RKM{I}(t) = \RKM{I}(t)$ if $t < K$, and ${}_K\tilde{q}_{I}(\alpha) = K$ if and only if $\tilde{q}_{I}(\alpha) = \infty$.
Assume that \eqref{eq:quantile_km} holds for these modified estimators. If $\lim_{t \to \infty}\RKM{r:s}(t) =: \alpha_0 < 1$, then ${}_K \RKM{r:s}(K-) < 1 = {}_K \RKM{r:s}(K) $, and
\[
    {}_K\tilde{q}_{r:s}(\alpha) = K = {}_K\RKM{r:s}^{-1}(\alpha), \quad \alpha > \alpha_0.
\]
Consequently, $\tilde{q}_{r:s}(\alpha) = \infty = \RKM{r:s}^{-1}(\alpha)$. For $\alpha \leq \alpha_0$ we have $\RKM{r:s}^{-1}(\alpha) < K$, which implies that ${}_K\RKM{r:s}^{-1}(\alpha) = \RKM{r:s}^{-1}(\alpha)$ and
\[
    \tilde{q}_{r:s}(\alpha) = {}_K\tilde{q}_{r:s}(\alpha) = {}_K\RKM{r:s}^{-1}(\alpha) = \RKM{r:s}^{-1}(\alpha).
\]
Hence, for the remainder of the proof, we can work with the modified estimators ${}_K \KM{I}$, ${}_K \RKM{I}$, and ${}_K \tilde{q}_I$, which allows us to assume that $\lim_{t \to \infty} \RKM{r:s}(t) = 1$ for all $r \leq s$, but we drop the prescript $K$ to lighten the notation.

\bigskip\noindent
\textbf{Case 1:} Assume that the lower bound in $\RKM{r:s}(y)$ is attained, i.e.,
\[
     \KM{r:s}(y) < \RKM{r:s}(y) = \max_{r \leq k < s} \min \{ \RKM{r:k}(y), \RKM{k+1:s}(y)\},
\]
where we use \cref{lem:sidr_one_step_recursion} for the representation of $\RKM{r:s}(y)$. Since $\KM{r:s}(y) \geq \alpha$, there exists an index $k$ such that
\[
    \alpha < \min \{ \RKM{r:k}(y), \RKM{k+1:s}(y)\}, 
\]
implying that
\[
   \hat{q}_{r:s}(\alpha) = \KM{r:s}^{-1}(\alpha) = y \geq \max \{ \RKM{r:k}^{-1}(\alpha), \RKM{k+1:s}^{-1}(\alpha)\} = \max\{\tilde{q}_{r:k}(\alpha), \tilde{q}_{k+1:s}(\alpha)\},
\]
using \eqref{eq:quantile_km_y} for the definition of $y$, and the induction assumption for the last equality. The definition of $\tilde{q}_{r:s}$ implies
\[
    \tilde{q}_{r:s}(\alpha) = \min_{r \leq k < s} \max\{\tilde{q}_{r:k}(\alpha), \tilde{q}_{k+1:s}(\alpha)\} \leq \hat{q}_{r:s}(\alpha),
\]
i.e., the upper bound in $\tilde{q}_{r:s}(\alpha)$ is attained. Now we have
\begin{align} \label{eq:upperbound_quantile_km}
 \RKM{r:s}(\tilde{q}_{r:s}(\alpha)) & = \RKM{r:s}\left(\min_{r \leq k' < s} \max \{ \tilde{q}_{r:k'}(\alpha),  \tilde{q}_{k'+1:s}(\alpha) \}\right) \\
 & \geq \max_{r \leq k < s} \min \left\{\RKM{r:k}\left(\min_{r \leq k' < s} \max \{ \tilde{q}_{r:k'}(\alpha),  \tilde{q}_{k'+1:s}(\alpha) \}\right), \, \RKM{k+1:s}\left(\min_{r \leq k' < s} \max \{ \tilde{q}_{r:k'}(\alpha),  \tilde{q}_{k'+1:s}(\alpha) \}\right) \right\} \nonumber\\
 & = \max_{r \leq k < s} \min \left\{\RKM{r:k}\left(\max \{ \tilde{q}_{r:k_0'}(\alpha),  \tilde{q}_{k_0'+1:s}(\alpha) \}\right), \, \RKM{k+1:s}\left(\max \{ \tilde{q}_{r:k_0'}(\alpha),  \tilde{q}_{k_0'+1:s}(\alpha) \}\right) \right\} \nonumber\\
 & \geq \min \left\{\RKM{r:k_0'}\left(\max \{ \tilde{q}_{r:k_0'}(\alpha),  \tilde{q}_{k_0'+1:s}(\alpha) \}\right), \, \RKM{k_0'+1:s}\left(\max \{ \tilde{q}_{r:k_0'}(\alpha),  \tilde{q}_{k_0'+1:s}(\alpha) \}\right) \right\} \nonumber\\
 & \geq \min \left\{\RKM{r:k_0'}\left(\tilde{q}_{r:k_0'}(\alpha)\right), \, \RKM{k_0'+1:s}\left(\tilde{q}_{k_0'+1:s}(\alpha)\right)\right\}  \nonumber\\
 & \geq \alpha \nonumber,
\end{align}
denoting by $k_0'$ an index for which the inner minimum is attained, and using the induction assumption in the last inequality. To show the inequality for the left limit, let $\varepsilon > 0$. Assume for a contradiction that
\[
    \RKM{r:s}(\tilde{q}_{r:s}(\alpha) - \varepsilon) \geq \alpha.
\]
Then, because $\hat{q}_{r:s}(\alpha) \geq \tilde{q}_{r:s}(\alpha)$,
\[
     \RKM{r:s}(\tilde{q}_{r:s}(\alpha) - \varepsilon) \geq \alpha > \KM{r:s}(\hat{q}_{r:s}(\alpha) - \varepsilon) \geq \KM{r:s}(\tilde{q}_{r:s}(\alpha) - \varepsilon),
\]
which implies that at $z = \tilde{q}_{r:s}-\varepsilon$ we have $\KM{r:s}(z) < \RKM{r:s}(z)$ and $\RKM{r:s}(z)$ takes the value of the lower bound. So we have
\begin{align} \label{eq:lowerbound_quantile_km}
 & \RKM{r:s}(\tilde{q}_{r:s}(\alpha) - \varepsilon) \\
 & = \max_{r \leq k < s} \min \left\{\RKM{r:k}\left(\tilde{q}_{r:s}(\alpha) - \varepsilon\right), \, \RKM{k+1:s}\left(\tilde{q}_{r:s}(\alpha) - \varepsilon\right) \right\} \nonumber\\
 & = \max_{r \leq k < s} \min \left\{\RKM{r:k}\left(\min_{r \leq k' < s} \max \{ \tilde{q}_{r:k'}(\alpha),  \tilde{q}_{k'+1:s}(\alpha) \}-\varepsilon\right), \, \RKM{k+1:s}\left(\min_{r \leq k' < s} \max \{ \tilde{q}_{r:k'}(\alpha),  \tilde{q}_{k'+1:s}(\alpha) \}-\varepsilon\right) \right\} \nonumber\\
 & = \max_{r \leq k < s} \min \left\{\RKM{r:k}\left(\min_{r \leq k' < s} \tilde{q}_{I(k')}(\alpha)-\varepsilon\right), \, \RKM{k+1:s}\left(\min_{r \leq k' < s} \tilde{q}_{I(k')}(\alpha)-\varepsilon\right) \right\} \nonumber\\
 & \leq \max_{r \leq k < s} \min \left\{\RKM{r:k}\left(\tilde{q}_{I(k)}(\alpha)-\varepsilon\right), \, \RKM{k+1:s}\left(\tilde{q}_{I(k)}(\alpha)-\varepsilon\right) \right\} \nonumber\\
 & \leq \max_{r \leq k < s} \min \left\{\RKM{r:k}\left(\tilde{q}_{I(k)}(\alpha)-\right), \, \RKM{k+1:s}\left(\tilde{q}_{I(k)}(\alpha)-\right) \right\} \nonumber\\
 & < \alpha \nonumber,
\end{align}
where $I(\ell)$ is the index interval $J \in \{ [r:\ell], [\ell+1:s]\}$ for which $\tilde{q}_J(\alpha)$ is highest, and the upper bound is $\alpha$ by the induction assumption, because at least one of the expressions in the minimum is bounded by $\alpha$. This contradicts $\RKM{r:s}(\tilde{q}_{r:s}(\alpha) - \varepsilon) \geq \alpha$.

\bigskip\noindent
\textbf{Case 2:} Assume that one of the following two conditions holds,
\[
    \KM{r:s}(y) > \RKM{r:s}(y) \geq \alpha \quad \text{ or } \quad \alpha \leq \KM{r:s}(y) = \RKM{r:s}(y).
\]
In both cases, we know that
\[
    \min_{r\leq k < s} \max\{\RKM{r:k}(y), \RKM{k+1:s}(y)\} \geq \RKM{r:s}(y) \geq \alpha,
\]
Then, for all $k \in \{r, \dots, s - 1\}$, we have
\[
    \max \{ \RKM{r:k}(y), \RKM{k+1:s}(y)\} \geq \alpha,
\]
implying that
\[
    \max_{r\leq k < s} \min \left\{\RKM{r:k}^{-1}(\alpha), \RKM{k+1:s}^{-1}(\alpha)\right\} \leq y,
\]
which, by induction assumption and \eqref{eq:quantile_km_y}, implies
\[
    \max_{r\leq k < s} \min \left\{\tilde{q}_{r:k}(\alpha), \tilde{q}_{k+1:s}(\alpha)\right\} \leq \hat{q}_{r:s}(\alpha).
\]
Hence, the lower bound in $\tilde{q}_{r:s}(\alpha)$ is not attained or attained with equality, meaning that
\[
    \max_{r\leq k < s} \min \left\{\tilde{q}_{r:k}(\alpha), \tilde{q}_{k+1:s}(\alpha)\right\} \leq \tilde{q}_{r:s}(\alpha) = \min\left\{\hat{q}_{r:s}(\alpha), \, \min_{r\leq k < s} \max \left\{\tilde{q}_{r:k}(\alpha), \tilde{q}_{k+1:s}(\alpha)\right\}\right\}.
\]
If $\hat{q}_{r:s}(\alpha)=\tilde{q}_{r:s}(\alpha)$, then 
\[
    \alpha \leq \RKM{r:s}(y) = \RKM{r:s}(\tilde{q}_{r:s}(\alpha))
\]
holds by assumption. Otherwise, i.e., if $\hat{q}_{r:s}(\alpha)>\tilde{q}_{r:s}(\alpha)$, then the upper bound for $\tilde{q}_{r:s}(\alpha)$ is attained. In this case, exactly the same inequalities as in \eqref{eq:upperbound_quantile_km} show that $\RKM{r:s}(\tilde{q}_{r:s}(\alpha))\geq \alpha$.

For the left limit, if $\hat{q}_{r:s}(\alpha)=\tilde{q}_{r:s}(\alpha)$, then $\KM{r:s}(\tilde{q}_{r:s}(\alpha)-\epsilon) < \alpha$ holds by definition of the lower $\alpha$-quantile. Otherwise, i.e., if $\hat{q}_{r:s}(\alpha)>\tilde{q}_{r:s}(\alpha)$, then the upper bound for $\tilde{q}_{r:s}(\alpha)$ is attained. As shown in Case 1, this implies that $\KM{r:s}(z) < \RKM{r:s}(z)$ for $z = \tilde{q}_{r:s}(\alpha) - \varepsilon$, and one can apply the inequalities \eqref{eq:lowerbound_quantile_km} to obtain $\RKM{r:s}(\tilde{q}_{r:s}(\alpha)-\varepsilon) < \alpha$.

\bigskip\noindent
\textbf{Case 3:} Assume that 
\[
    \KM{r:s}(y) \geq \alpha > \RKM{r:s}(y) = \min_{r \leq k < s} \max \{ \RKM{r:k}(y), \RKM{k+1:s}(y)\}.
\]
Then, there exists $k \in \{r, \dots, s-1\}$ such that
\[
    \max \{ \RKM{r:k}(y), \RKM{k+1:s}(y)\} < \alpha,
\]
implying that
\[
    \max_{r\leq k < s} \min \left\{\RKM{r:k}^{-1}(\alpha), \RKM{k+1:s}^{-1}(\alpha)\right\} > y,
\]
which, by induction assumption and by \eqref{eq:quantile_km_y}, implies
\[
    \max_{r\leq k < s} \min \left\{\tilde{q}_{r:k}(\alpha), \tilde{q}_{k+1:s}(\alpha)\right\} > \hat{q}_{r:s}(\alpha).
\]
Hence, the lower bound in $\tilde{q}_{r:s}(\alpha)$ is attained. For the left limit, let $\varepsilon > 0$. Then,
\begin{align*}
    & \RKM{r:s}(\tilde{q}_{r:s}(\alpha)-\varepsilon) \\
    \ & \leq \min_{r \leq k < s} \max \left\{\RKM{r:k}\left(\tilde{q}_{r:s}(\alpha)-\varepsilon\right), \, \RKM{k+1:s}\left(\tilde{q}_{r:s}(\alpha)-\varepsilon\right) \right\} \nonumber\\
    \ & = \min_{r \leq k < s} \max \left\{\RKM{r:k}\left(\max_{r \leq k' < s} \min \{ \tilde{q}_{r:k'}(\alpha),  \tilde{q}_{k'+1:s}(\alpha) \}-\varepsilon\right), \, \RKM{k+1:s}\left(\max_{r \leq k' < s} \min \{ \tilde{q}_{r:k'}(\alpha),  \tilde{q}_{k'+1:s}(\alpha) \}-\varepsilon\right) \right\} \nonumber\\
    \ & = \min_{r \leq k < s} \max \left\{\RKM{r:k}\left(\min \{ \tilde{q}_{r:k_0'}(\alpha),  \tilde{q}_{k_0'+1:s}(\alpha) \}-\varepsilon\right), \, \RKM{k+1:s}\left(\min \{ \tilde{q}_{r:k_0'}(\alpha),  \tilde{q}_{k_0'+1:s}(\alpha) \}-\varepsilon\right) \right\} \nonumber\\
    \ & \leq \min_{r \leq k < s} \max \left\{\RKM{r:k}\left(\tilde{q}_{r:k_0'}(\alpha)-\varepsilon\right), \, \RKM{k+1:s}\left(\tilde{q}_{k_0'+1:s}(\alpha)-\varepsilon\right) \right\} \nonumber\\
    & \leq \max \left\{\RKM{r:k_0'}\left(\tilde{q}_{r:k_0'}(\alpha)-\varepsilon\right), \, \RKM{k_0'+1:s}\left(\tilde{q}_{k_0'+1:s}(\alpha)-\varepsilon\right) \right\} \\
    & \leq \max \left\{\RKM{r:k_0'}\left(\tilde{q}_{r:k_0'}(\alpha)-\right), \, \RKM{k_0'+1:s}\left(\tilde{q}_{k_0'+1:s}(\alpha)-\right) \right\} \\
    & < \alpha,
\end{align*}
where $k_0'$ is an index attaining the inner maximum over $k'$, and we have used the induction assumption in the last step. To show the inequality $\RKM{r:s}(\tilde{q}_{r:s}(\alpha)) \geq \alpha$, assume for a contradiction that $\RKM{r:s}(\tilde{q}_{r:s}(\alpha)) < \alpha$. Then,
\[
    \KM{r:s}(\tilde{q}_{r:s}(\alpha)) \geq \KM{r:s}(\hat{q}_{r:s}(\alpha)) \geq \alpha > \RKM{r:s}(\tilde{q}_{r:s}(\alpha)),
\]
where the first inequality holds because $\tilde{q}_{r:s}(\alpha) \geq \hat{q}_{r:s}(\alpha)$, and the second by \eqref{eq:quantile_km_y}. This implies that the upper bound in $\RKM{r:s}(\tilde{q}_{r:s}(\alpha))$ is attained, i.e.,
\[
    \RKM{r:s}(\tilde{q}_{r:s}(\alpha)) = \min_{r \leq k < s} \max \{ \RKM{r:k}(\tilde{q}_{r:s}(\alpha)), \RKM{k+1:s}(\tilde{q}_{r:s}(\alpha))\}.
\]
We then have
\begin{align}
& \RKM{r:s}(\tilde{q}_{r:s}(\alpha)) \nonumber \\
\ & = \RKM{r:s}\left(\max_{r \leq k' < s} \min \{ \tilde{q}_{r:k'}(\alpha),  \tilde{q}_{k'+1:s}(\alpha) \} \right) \nonumber\\
\ & = \min_{r \leq k < s} \max \left\{\RKM{r:k}\left(\max_{r \leq k' < s} \min \{ \tilde{q}_{r:k'}(\alpha),  \tilde{q}_{k'+1:s}(\alpha) \} \}\right), \, \RKM{k+1:s}\left(\max_{r \leq k' < s} \min \{ \tilde{q}_{r:k'}(\alpha),  \tilde{q}_{k'+1:s}(\alpha) \} \}\right) \right\} \nonumber\\
\ & = \min_{r \leq k < s} \max \left\{\RKM{r:k}\left(\max_{r \leq k' < s} \tilde{q}_{I(k')}(\alpha)\right), \, \RKM{k+1:s}\left(\max_{r \leq k' < s} \tilde{q}_{I(k')}(\alpha)\right) \right\} \nonumber\\
\ & \geq \min_{r \leq k < s} \max \left\{\RKM{r:k}\left( \tilde{q}_{I(k)}(\alpha)\right), \, \RKM{k+1:s}\left(\tilde{q}_{I(k)}(\alpha)\right) \right\} \nonumber\\
\ & \geq \alpha \nonumber,
\end{align}
letting $I(\ell)$ be the index set attaining the inner minimum. This contradicts $\RKM{r:s}(\tilde{q}_{r:s}(\alpha)) < \alpha$.
\end{proof}

The equality of the quantile S-IDR to the inverse of S-IDR in terms of the CDF follows by exactly the same arguments as the proof of Lemma 2.3 of \citet{moschingMonotoneLeastSquares2020} in the uncensored case, since the latter result only uses the max-min and min-max formulae for CDF and quantile estimators.

\subsection{Consistency: Proof of \cref{th:sidr_modified_consistency}}\label{aa:sidr_modified_consistency}
Recall that the observations are $(X_i, T_i, \Delta_i)$ for $i = 1, \dots, n$, with $m$ unique values of the covariate $\xi_1 < \dots < \xi_m$ and corresponding numbers of observations $N_1, \dots, N_m$, where
\[
    N_j = \#\{i\in \{1\dots, n\}\colon X_i = \xi_j\}.
\]
For a constant $n \geq c_n \geq 1$ and an integer $m \geq 1$, define
\[
    i_1 = i_1(c_n) = \min\{j \in \{1,\dots,m\}\colon N_1 + \dots + N_j \geq c_n\},
\]
and, recursively as long as $i_{j-1} < m$ and the below minimum is defined,
\[
    i_j = i_j(c_n) = \min\{k \in \{i_{j-1} + 1,\dots,m\}\colon N_{i_{j-1} + 1} + \dots + N_k \geq c_n\}.
\]
This creates an index set $\mathcal{I}(c_n) = \{i_1, \dots, i_K\}$ with $i_K \leq m$, such that
\[
    \#\{i \in O_{1:n} \mid X_i \le \xi_{i_1} \} \ge c_n \quad \text{and} \quad \#\{i \in O_{1:n} \mid \xi_{i_{j-1}} < X_i \le \xi_{i_j} \} \ge c_n, \quad j \in 2, \ldots, K = K(c_n),
\]
that is, the number of $i$ for which $X_i$ is between $\xi_{i_{j-1} + 1}$ and $\xi_{i_j}$ is at least $c_n$. To impose a minimal partition size in S-IDR, we provide a definition analogous to \cref{def:sidr} but on the coarser grid $\mathcal{I}(c_n) =\{i_1, \dots, i_K\}$, which guarantees that the Kaplan--Meier estimator in the index interval that realises the estimator is always computed over at least $\lceil c_n \rceil$ observations. To this end, define interval start- and end indices as
\[
    \mathcal{I}_R(c_n) := \{1\} \cup \{k + 1\colon k \in \mathcal{I}(c_n), k < m\}, \quad \mathcal{I}_S(c_n) := \mathcal{I}(c_n) \cup \{ m \}
\]
respectively and denote for $r \in \mathcal{I}_R(c_n), s \in \mathcal{I}_S(c_n)$ by $\mathcal{P}_{r:s,\mathcal{I}(c_n)} $ all partitions of $[r:s]$ into at least two index intervals such that the first and last point of each partition element are in $\mathcal{I}_R(c_n)$ and $\mathcal{I}_S(c_n)$, respectively.

We can now define the self-consistent Kaplan--Meier estimator on this grid. Abbreviate $\mathcal{I} = \mathcal{I}(c_n)$ and for $r \in \mathcal{I}_R(c_n)$ and $s \in \mathcal{I}_S(c_n)$, set $\RKM{r:s, \mathcal{I}}(y) = \KM{r:s}(y)$ whenever $\{ i \in \mathcal{I} \colon r < i < s\} = \emptyset$ (meaning, $[r:s]$ cannot be subdivided in the grid) and
\[
    \RKM{r:s, \mathcal{I}}(y) := \clamp\left(\KM{r:s}(y), \max_{P \in \mathcal{P}_{r:s, \mathcal{I}}} \min_{I \in P} \RKM{I,\mathcal{I}}(y), \min_{P \in \mathcal{P}_{r:s}, \mathcal{I}} \max_{I \in P} \RKM{I, \mathcal{I}}(y)\right)
\]
otherwise. The modified S-IDR estimator is defined as
\begin{equation}
    \hat{F}_{\xi_i, \mathcal{I}}(y) := \min_{r \leq i, r \in \mathcal{I}_R(c_n)} \max_{s \geq i, s \in \mathcal{I}_S(c_n)} \RKM{r:s, \mathcal{I}}(y), \quad y \in \Rp, \, 1 \le i \le m.
\end{equation}
Like for the usual IDR, if $x \not\in \{\xi_1, \dots, \xi_m\}$, we define $\hat{F}_{x, \mathcal{I}}(y)$ as any interpolation that satisfies the monotonicity constraints.

The proof requires the following technical lemmas. We reuse the notation from \cref{a:plain_survival_idr_consistency}.

\begin{lemma}\label{lem:continuity_fbar}
Assume that $F_i, G_i$, $i = 1, \dots, k$, are distribution functions such that 
\begin{align} \label{eq:fg_close}
    \sup_{y \leq \tau}\max_{i,j=1,\dots,k} \max(|F_i(y) - F_j(y)|, |G_i(y) - G_j(y)|) \leq \epsilon.
\end{align}
Let $(T_i, \Delta_i) = (\min(Y_i, C_i), \ind{Y_i \leq C_i})$ for independent $Y_i \sim F_i$, $C_i \sim G_i$. If $\min_{i=1, \ldots, k} \pr(T_i > \tau) \geq \eta > 0$, then $\bar{F}_k$ defined at \eqref{eq:fbar_k} satisfies, for all $y \leq \tau$ and $i \in \{1, \dots, k\}$
\[
    |F_i(y) - \bar{F}_k(y)| \leq \frac{4\exp(\eta^{-1})}{\eta^2} \epsilon.
\]
\end{lemma}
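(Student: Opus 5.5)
The idea is to compare hazards: both $F_i$ and $\bar F_k$ are product integrals of cumulative hazards, and mixing nearly equal distributions gives a nearly equal cumulative hazard. Fix $i$ and $y\le\tau$. The data for $\bar F_k$ are the averaged functions $\bar H_k$ and $\bar H^1_k$ from \eqref{eq:fbar_k}. The data for $F_i$ are $H_i$ and $H^1_i$, since $1-F_i=\prodi(1-\dd\Lambda_i)$ under noninformative censoring.

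\textbf{Step 1: closeness of the sub-survival functions.} Write $H_j(t)=(1-F_j(t))(1-G_j(t))$ and $-\dd H^1_j(t)=(1-G_j(t-))\,\dd F_j(t)$. By \eqref{eq:fg_close} we get $|H_j(t)-H_i(t)|\le 2\epsilon$ for $t\le\tau$, hence $|\bar H_k-H_i|\le 2\epsilon$. For the sub-distribution $H^1$ the comparison must be made in integrated form. Write
\[
-\bar H^1_k(t)+H^1_k(0)\ \text{terms}\quad\text{as}\quad \tfrac1k\sum_j\int_0^t (1-G_j(s-))\,\dd F_j(s).
\]
Replacing $1-G_j$ by $1-G_i$ costs at most $\epsilon\,F_j(t)\le\epsilon$. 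Integrating by parts moves the remaining difference $\dd F_j-\dd F_i$ onto the bounded-variation function $1-G_i$, giving $|F_j-F_i|$ terms plus a total-variation term of size at most $2\epsilon$. Hence $\sup_{t\le\tau}|\bar H^1_k(t)-H^1_i(t)|\le c\,\epsilon$ for a small constant $c$ (about $3$).

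\textbf{Step 2: closeness of the cumulative hazards.} The hazards are $\Lambda(t)=-\int_0^t H(s-)^{-1}\dd H^1(s)$, with $H\ge\eta$ on $[0,\tau]$ because $\min_i\pr(T_i>\tau)\ge\eta$. Write $\bar\Lambda_k-\Lambda_i$ as
\[
-\int (\bar H_k^{-1}-H_i^{-1})\,\dd\bar H^1_k\;-\;\int H_i^{-1}\,\dd(\bar H^1_k-H^1_i).
\]
The first term is at most $\eta^{-2}\cdot 2\epsilon$, since $|1/a-1/b|\le|a-b|/\eta^2$ and $\bar H^1_k$ has total mass at most $1$. For the second term, integrate by parts again. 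Since $H_i^{-1}$ is monotone with values in $[1,\eta^{-1}]$, its total variation is at most $\eta^{-1}$, so this term is bounded by a constant times $\eta^{-1}c\epsilon$. Altogether $\sup_{t\le\tau}|\bar\Lambda_k-\Lambda_i|\le C'\epsilon/\eta^2$.

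\textbf{Step 3: from hazards to CDFs.} The cumulative hazards are bounded on $[0,\tau]$ by $\int H^{-1}\,\dd(-H^1)\le\eta^{-1}$. Apply the Duhamel/product-integral perturbation inequality, as in Gill--Johansen:
\[
\Bigl|\prodi(1-\dd\Lambda)-\prodi(1-\dd\Lambda')\Bigr|\le e^{\|\Lambda\|_v+\|\Lambda'\|_v}\sup|\Lambda-\Lambda'|,
\]
or the Lipschitz bound $|e^{-a}-e^{-b}|\le|a-b|$ in the continuous case. This produces the factor $\exp(\eta^{-1})$. Tracking constants should give the stated $4\exp(\eta^{-1})\eta^{-2}\epsilon$.

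\textbf{Main obstacle.} The delicate step is the $\dd(\bar H^1_k-H^1_i)$ term. Sup-norm closeness of $F_j$ does not give closeness of the measures $\dd F_j$, so the integration by parts is unavoidable. I would carry it out carefully at atoms, using left limits $H(s-)$ and boundary terms at $0$ and $t$. In the same place I would also bound the variation of $\Lambda$ so that the exponential constant is exactly $\exp(\eta^{-1})$, matching the statement's constant rather than a larger one.
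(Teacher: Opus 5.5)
\textbf{Steps 1 and 2.} These are essentially the paper's argument. You compare $\bar H_k$ with $H_i$, compare the sub-distributions $A_j(t)=\pr(T_j\le t,\Delta_j=1)$ in integrated form, and then pass to the cumulative hazards using $H\ge\eta$ and an integration by parts against the monotone function $H_i(\cdot-)^{-1}$. Your bookkeeping is looser than it needs to be, though. The paper handles the $\dd(F_j-F_i)$ term by Fubini:
$\int_{[0,y]}(1-G_j(t-))\,\dd(F_j-F_i)(t)=\mathbb{E}_{U\sim G_j}[F_j(\min(y,U))-F_i(\min(y,U))]\in[-\epsilon,\epsilon]$.
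This gives $|A_i-\bar A_k|\le 2\epsilon$, and hence $\sup_{t\le\tau}|\bar\Lambda_k(t)-\Lambda_i(t)|\le(2\eta^{-1}+2\eta^{-2})\epsilon\le 4\epsilon\eta^{-2}$. Your integration by parts gives the same $\epsilon$ if you add the boundary term (at most $\epsilon(1-G(t))$) and the variation term (at most $\epsilon\,G(t)$) together. Bounding them separately is where your $c\approx 3$ comes from.

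\textbf{The gap is Step 3.} You flag it but do not close it, and ``tracking constants should give the stated bound'' does not follow from what you wrote. You bound each of $\|\Lambda_i\|_v$ and $\|\bar\Lambda_k\|_v$ by $\eta^{-1}$. A perturbation bound with exponent $\|\Lambda\|_v+\|\Lambda'\|_v$ then produces $\exp(2\eta^{-1})$, not $\exp(\eta^{-1})$. The shortcut $|e^{-a}-e^{-b}|\le|a-b|$ is also unavailable, because the lemma allows atoms.

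The paper never puts two hazards in the exponent. It writes the Volterra equations $\bar F_k(y)=\int_{[0,y]}(1-\bar F_k(s-))\,\dd\bar\Lambda_k(s)$ and $F_i(y)=\int_{[0,y]}(1-F_i(s-))\,\dd\Lambda_i(s)$. Their difference is
$\int_{[0,y]}(F_i-\bar F_k)(s-)\,\dd\bar\Lambda_k(s)+\int_{[0,y]}(1-F_i(s-))\,\dd(\bar\Lambda_k-\Lambda_i)(s)$.
The second term is at most $\sup_{t\le\tau}|\bar\Lambda_k(t)-\Lambda_i(t)|$ by integration by parts, because $1-F_i$ is nonincreasing with values in $[0,1]$. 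Gr\"onwall's inequality with respect to the single measure $\dd\bar\Lambda_k$, whose mass on $[0,\tau]$ is at most $\eta^{-1}$, then gives exactly $\exp(\eta^{-1})\cdot 4\epsilon\eta^{-2}$.

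Alternatively, your Duhamel route can be rescued by a sharper variation bound. Since $\dd A\le-\dd H$ as measures, each hazard satisfies $\Lambda(\tau)\le\int_{[0,\tau]}(-\dd H(s))/H(s-)\le\log(1/H(\tau))\le\log(1/\eta)$. Because $2\log(1/\eta)\le\eta^{-1}$, the exponent is then at most $\eta^{-1}$. You would still have to justify the perturbation inequality with the leading constant $1$ that you quote.

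For the later consistency proof, any constant depending only on $\eta$ would suffice, so the qualitative content of your sketch is sound. What is missing is the argument for the stated constant.
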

\begin{proof}
For $y \leq \tau$, we have
\[
    |H_i(y) - \bar{H}_k(y)| = \Big\vert\frac{1}{k}\sum_{j=1}^k\left[(1-F_i(y))(1-G_i(y)) - (1-F_j(y))(1-G_j(y))\right]\Big\vert \leq 2\epsilon,
\]
Define
\begin{align*}
    & A_i(t) := \pr(T_i \le t, \Delta_i = 1) = \int_{[0,y]} 1-G_i(t-) \,d F_i(t), \\
    & \bar{A}_k(t) := \frac{1}{k}\sum_{j=1}^k\pr(T_j \le t, \Delta_j = 1) = \frac{1}{k}\sum_{j=1}^k\int_{[0,y]} 1-G_j(t-) \,d F_j(t),
\end{align*}
so that $\Lambda_i(y) = \int_{[0, y]} H_i^{-1}(s) dA_i(s)$, and analogously for $\bar{\Lambda}(y)$. The $A_i$ satisfy
\begin{align*}
    & A_i(t) - \bar{A}_k(t) = \frac{1}{k} \sum_{j=1}^k \biggl( \pr(T_i \le t, \Delta_i = 1) - \pr(T_j \le t, \Delta_j = 1) \biggr) \\
    & = \frac{1}{k} \sum_{j=1}^k \left(\int_{[0,y]} G_j(t-)-G_i(t-) \,d F_i(t) - \int_{[0,y]} 1-G_j(t-) \,d (F_j - F_i)(t)\right).
\end{align*}
For each $j$, the first integral on the right-hand side is bounded in $[-\epsilon, \epsilon]$. By Fubini's Theorem, the second integral can be rewritten as
\begin{align*}
    \int_{[0,y]} 1-G_j(t-) \,d (F_j - F_i)(t) & = \int_{[0,y]} \mathbb{E}_{U \sim G_j}[\ind{U \in [t, \infty)}]\,d (F_j - F_i)(t) \\
    & = \mathbb{E}_{U \sim G_j}\left[\int_{[0,y]} \ind{t \le U}\,d (F_j - F_i)(t) \right] \\
    & = \mathbb{E}_{U \sim G_j}\bigl[F_j(\min(y,U)) - F_i(\min(y,U))\bigr] \in [-\epsilon, \epsilon].
\end{align*}
The two bounds above yield
\[
    |A^1_i(y) - \bar{A}^1(y)| \leq 2\epsilon.
\]
Combining the upper bounds for $|H_i - \bar{H}_k|$ and $|A_i - \bar{A}_k|$ implies that
\[
    \sup_{y \leq \tau} |\bar{\Lambda}_k(y) - \Lambda_i(y)| \leq (2\eta^{-1} + 2\eta^{-2})\epsilon \leq \frac{4\epsilon}{\eta^2}.
\]
The product integral solves the equation
\[
    \bar{F}_k(y) = \int_0^y 1 - \bar{F}_k(s-) d\bar{\Lambda}_k(s),
\]
and analogously for $F_i$. So we have
\[
    \sup_{y \leq \tau} |\bar{F}_k(y) - F_i(y)| \leq \int_0^{\tau} \sup_{t \leq s}|\bar{F}_k(t-) - F_i(t)| d\bar{\Lambda}_k(s) + \sup_{y \leq \tau} |\bar{\Lambda}_k(y) - \Lambda_i(y)|.
\]
Gr\"onwall's inequality \citep[Theorem 9]{gillSurveyProductIntegrationView1990} now implies that
\[
    \sup_{y \leq \tau} |\bar{F}_k(y) - F_i(y)| \leq \exp(\eta^{-1}) \sup_{y \leq \tau} |\bar{\Lambda}_k(y) - \Lambda_i(y)| \leq \frac{4\exp(\eta^{-1})\epsilon}{\eta^2}.
\]
\end{proof}

\begin{lemma} \label{lem:order_statistics_close}
Assume that \cref{cond:support_X} holds, and let $X_{(1)} \leq \dots \leq X_{(n)}$ be the order statistics of $X_1, \dots, X_n$. For $0 < k \leq 1$, the event that
\[
    X_{(r)} - X_{(s)} \leq \delta_n
\]
for all $1 \le s \le r \le n$ such that $kC_2 n \delta_n \geq r - s$ has asymptotic probability one.
\end{lemma}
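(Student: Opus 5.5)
The claim is a contrapositive of the density condition (\cref{cond:dense_covariates}) combined with \cref{cond:support_X}. We work on the event $A_n$ of \cref{cond:dense_covariates}, which has asymptotic probability one, and show that on $A_n$ the conclusion holds deterministically. Since $\pr(X \in \mathbf{I}) = 1$, all order statistics lie in $\mathbf{I}$ almost surely, so any interval spanned by them is a subinterval of $\mathbf{I}$ to which the condition applies.

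Fix $s \le r$ with $r - s \le kC_2 n\delta_n$. Suppose for contradiction that $X_{(r)} - X_{(s)} > \delta_n$. Consider the closed interval $\mathbf{J} := [X_{(s)}, X_{(r)}] \subseteq \mathbf{I}$, which has $\lambda(\mathbf{J}) > \delta_n$. On $A_n$ it therefore contains at least $C_2 n \lambda(\mathbf{J}) > C_2 n\delta_n$ covariates. The step needing care is counting how many covariates can lie in $\mathbf{J}$. In the continuous case these are exactly $X_{(s)}, \dots, X_{(r)}$, so there are $r-s+1$ of them. Ties at the endpoints could add more, so instead we shrink to the open interval $(X_{(s)}, X_{(r)})$, or to a slightly smaller closed interval $[X_{(s)}+\epsilon', X_{(r)}-\epsilon']$ that still has length at least $\delta_n$. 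Any covariate in that interval must have rank strictly between $s$ and $r$, so there are at most $r-s-1$ of them.

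This yields $r - s - 1 \ge C_2 n \delta_n$. Since $k \le 1$, this contradicts $r - s \le kC_2 n\delta_n \le C_2 n\delta_n$. The boundary case $X_{(r)} - X_{(s)}$ only slightly above $\delta_n$ is handled by taking $\epsilon'$ small enough that the shrunken interval keeps length $\ge \delta_n$. That is possible as long as the gap exceeds $\delta_n$ strictly, which is exactly our contradiction hypothesis.

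Because the argument is deterministic on $A_n$ and uniform over all admissible pairs $(s,r)$, the event in the lemma contains $A_n$. Hence it has asymptotic probability one. The only subtlety is the tie and endpoint bookkeeping above; no probabilistic estimate beyond \cref{cond:dense_covariates} is required.
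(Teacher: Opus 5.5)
Your proof is correct and follows the paper's argument. On the event of \cref{cond:dense_covariates}, a gap $X_{(r)} - X_{(s)} > \delta_n$ would give a subinterval of $\mathbf{I}$ of length at least $\delta_n$ that contains too few observations, so the lemma's event contains that high-probability event. Shrinking to an interval strictly inside $[X_{(s)}, X_{(r)}]$, which can only contain observations with sorted positions $s+1, \dots, r-1$, makes explicit the tie and endpoint counting that the paper's one-line bound ``at most $kC_2 n\delta_n$ observations'' leaves implicit.
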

\begin{proof}
The event defined in this lemma is implied by the event in \cref{cond:dense_covariates}, so it must have a higher probability. Indeed, define the interval $\mathbf{I}_n = [X_{(s)}, X_{(r)}]$. If $\lambda(\mathbf{I}_n) > \delta_n$, then $\mathbf{I}_n$ contains an interval of length greater than $\delta_n$ with number of observations at most $k C_2 n \delta_n < C_2 n \lambda(\mathbf{I}_n)$, which is not possible under \cref{cond:dense_covariates}.
\end{proof}

\begin{proof}[Proof of \cref{th:sidr_modified_consistency}]
As in the proof of \cref{th:sidr_plain_consistency}, let $x \in \tilde{\mathbf{I}}_n$ and define $\tilde{\delta}_n = 4C_3 \rho_n^{1/(1+2\alpha)} = 4\delta_n$. We know that with asymptotic probability one, each of the intervals
\[
    [x-j\delta_n, x - (j-1)\delta_n], \ j = 1,\dots, 4,
\]
contains at least $C_2n\delta_n > 0$ observations. Hence, the interval $[x-\tilde{\delta}_n, x]$ contains at least two different points $\xi_{\ell} < \xi_u$ such that 
\[
    \#\{i \in \{1, \dots, n\}\colon \xi_{\ell} \leq X_i \leq \xi_u \} \geq 2C_2n\delta_n.
\]
Choosing the constant $c_n = kC_2n\delta_n$ with $k \in (0,1]$ guarantees that there are indices $r(x) \leq j(x)$ such that $r(x) \in \{1\}\cup \{k+1\colon k \in \mathcal{I}(c_n, m)\}$ and $j(x) \in \mathcal{I}(c_n, m)$ for which $\xi_{r(x)}, \xi_{j(x)} \in [x-4\delta_n, x]$. By construction, we know that
\[
    \RKM{r(x):s, \mathcal{I}}(y) = \KM{a:b}(y)
\]
for some indices $r(x) \leq a \leq b \leq j(x)$ such that 
\[
    \#\{i \in \{1, \dots, n\} \colon \xi_a \leq X_i \leq \xi_b \} \geq \lceil c_n\rceil \geq kC_2n\delta_n.
\]
Let $s_0 \in \mathcal{I} = \mathcal{I}(c_n, m)$ be an index such that
\[
    \RKM{r(x):s_0,\mathcal{I}}(y) = \max_{s \geq j(x)} \RKM{r(x):s,\mathcal{I}}(y).
\]
With $\mathcal{I} \cap [r(x), \infty] = \{i_{\ell}, \dots, i_{K}\}$ and $i_0 := 0$, the definition of the recursive estimator implies that
\begin{align*}
    \RKM{r(x):s_0,\mathcal{I}}(y) & \leq \max\left( \KM{(i_{j-1}+1):i_j}(y)\colon j = \ell + 1, \dots, K \right).
\end{align*}

With the definition of $M_n$ and the fact that $c_n = k C_2 n \delta_n$, we have
\[
    \KM{(i_{j-1}+1):i_j}(y) \leq \max\left( \bar{F}_{(i_{j-1}+1):i_j}(y)\colon j = \ell + 1, \dots, K \right) + \frac{M_n}{\sqrt{kC_2n\delta_n}}.
\]
\Cref{lem:continuity_fbar,lem:order_statistics_close} and \cref{cond:stochastic_order,cond:dense_covariates,cond:regularity_G} imply that for all $j$,
\[
    \bar{F}_{(i_{j-1}+1):i_j}(y) \leq F_{\xi_{i_{j-1}+1}}(y) + \frac{4\exp(\eta^{-1})}{\eta^2}\cdot C_1\delta_n^{\alpha}.
\]
Combining these observations, we can conclude similarly to the proof of \cref{th:sidr_plain_consistency},
\begin{align*}
    \hat{F}_{x,\mathcal{I}}(y) - F_x(y) & \leq \hat{F}_{\xi_{j(x)},\mathcal{I}}(y) - F_x(y) \\
    & = \min_{r \leq j(x), r \in \{1\} \cup \{k + 1\colon k \in \mathcal{I}\}} \max_{s \geq j(x), s \in \mathcal{I}} \RKM{r:s, \mathcal{I}}(y) - F_x(y) \\
    & \leq \max_{s \geq j(x), s \in \mathcal{I}} \RKM{r(x):s, \mathcal{I}}(y) - F_x(y) \\
    & \leq \frac{M_n}{\sqrt{k C_2 n\delta_n}} + \frac{4\exp(\eta^{-1})}{\eta^2}\cdot C_1\delta_n^{\alpha} + \max_{s \geq r(x), s \in \{1\} \cup \{k+1\colon k \in \mathcal{I}\}} F_{\xi_s}(y) - F_x(y)\\
    & \leq \frac{M_n}{\sqrt{k C_2 n \delta_n}} + \frac{4\exp(\eta^{-1})}{\eta^2}\cdot C_1\delta_n^{\alpha} + F_{\xi_{r(x)}}(y) - F_x(y) \\
    & \leq \frac{M_n}{\sqrt{k C_2 n \delta_n}} + 4C_1\delta_n^\alpha\left(1 + \frac{4\exp(\eta^{-1})}{\eta^2}\right),
\end{align*}
which yields the same convergence rate as in \cref{th:sidr_plain_consistency}, with a different constant.
\end{proof}

The crucial difference from the proof of \cref{th:sidr_plain_consistency} is in the inequality
\[
    \max_{s \geq j(x)} \bar{F}_{r(x)s}(y) \leq F_{x_{r(x)}}(y),
\]
which holds for the plain survival IDR under the hazard rate assumption, but generally not under the usual stochastic order. For the recursive estimator, one can only establish an upper bound in terms of $F_{x_{r(x)}}(y)$, plus an additional error of order $\delta_n^{\alpha}$.

\subsection{One-step recursion: Proof of \cref{lem:sidr_one_step_recursion}}\label{a:sidr_one_step_recursion}
We proceed to prove \cref{lem:sidr_one_step_recursion} by showing that there is always a partition of size 2 that reaches the highest lower bound (resp. the lowest upper bound) in the clamping step \eqref{eq:sidr_clamp}, such that we can restrict the $\max$ (resp. the $\min$) to only those $P \in \mathcal{P}_{r:s}$ with $|P| = 2$. The following argument is given for the lower bound; the argument for the upper bound is similar.

\begin{proof}
    Fix $y \in \Rp$ and let $L \in \mathcal{P}_{r:s}$ be any interval partition for which the lower bound $B_l := \max_{P \in \mathcal{P}_{r:s}} \min_{I \in P} \RKM{I}(y)$ is reached, such that $B_l = \min_{I \in L} \RKM{I}(y)$. We show that if $|L| > 2$, we can modify $L$ by merging partition elements while maintaining $B_l = \min_{I \in L} \RKM{I}(y)$.
    
    Let $I^\ast \in L$ be an interval partition element on which the minimum is reached, i.e., $B_l = \RKM{I^\ast}(y)$. Let $I' \in L$ be an adjacent interval partition element to $I^\ast$ and consider the partition $L' = L \setminus \{I^\ast, I'\} \cup \{ I^\ast \cup I' \}$ in which the two partition elements are merged.

    We claim that
    \[
        \min_{I \in L'} \RKM{I}(y) = \min_{I \in L} \RKM{I}(y).
    \]
    If there is more than one such minimising $I^\ast$, we can clearly merge one with a neighbour without affecting the above minima, so assume that $I^\ast$ is the unique minimiser. Because $L$ and $L'$ are the same up to elements $I^\ast, I'$ and $I^\ast \cup I'$, it is sufficient to show that
    \[
        \RKM{I^\ast \cup I'} = \min_{I = I^\ast, I'} \RKM{I}(y).
    \]
    By the definition of $\RKM{}$, the left-hand side is not smaller than the right-hand side. If the left-hand side were larger than the right-hand side, this would contradict our choice of $L$ as the interval partition maximising the lower bound. So, the claim is proven.

    By downward induction on $|L|$, we conclude that for any minimising partition $L$ with $|L| > 2$, there exists a coarser partition $L'$ with $|L'| = 2$ and $B_l = \min_{I \in L'} \RKM{I}(y)$. As such, we can restrict the outer maximisation in the definition of $B_l$ to only those $P \in \mathcal{P}_{r:s}$ with $|P| = 2$.
\end{proof}

\subsection{Partial orders}\label{aa:partial_orders}

As introduced in \cref{s:extensions}, it is also possible to apply S-IDR directly to data with a multi-dimensional covariate. In this case, not all covariates will have a clear ordering relative to all others, meaning that we only have a partial order on covariate space $\mathcal{X}$. For instance, the \PRK paper predicts the first recurrence of prostate cancer using baseline prostate-specific antigen levels and the Gleason grade, a measure of the aggressiveness of the tumour cells. If a patient has both higher antigen levels and a higher Gleason grade, recurrence is expected to occur sooner, but if one of these metrics is higher and the other is lower than those of another patient, we have no clear ordering. Since \PRK only applies to discrete covariates, they must assign patients to several partially comparable groups based on their original covariates. We briefly discuss how to adapt S-IDR, which can handle continuous covariates, to partially ordered $\mathcal{X}$. The functionality is provided in our Rust and Python implementations accompanying this article.

Denote by $\mathcal{L}$ and $\mathcal{U}$ the lower and upper sets of the unique covariate observations $\xi_1, \dots, \xi_m \in \mathcal{X}$. That is, for $L \in \mathcal{L}$, $\xi \in L$ implies $\xi' \in L$ for all $\xi' \preceq \xi$, and analogously, for all $U \in \mathcal{U}$ and $\xi \in U$, we have that $\xi' \succeq \xi$ implies $\xi' \in U$. We can extend \cref{def:sidr} to
\begin{equation} \label{eq:partial_order_estimator}
    \hat{F}_{\xi_i}(y) = \min_{U \in \mathcal{U}: i \in U} \, \max_{L \in \mathcal{L}:i \in L} \,\,\RKM{L \cap U}(y),
\end{equation}
where the self-consistent Kaplan--Meier estimator is now defined most conveniently by extending from \cref{lem:sidr_one_step_recursion} as
\begin{align*}
    \RKM{S}(y) & := \clamp\left( \KM{S}(y), \max_{(L, U) \in \mathcal{P}_S} \min_{S' = L, U} \RKM{S'}(y), \min_{(L, U) \in \mathcal{P}_S} \max_{S' = L, U} \RKM{S'}(y) \right),
\end{align*}
with the partitions
\[
    \mathcal{P}_S := \{ (S \cap L, S \cap U) \mid L \in \mathcal{L}, U \in \mathcal{U}, L \cup U \supseteq S, L \cap U = \emptyset \}.
\]

Even though the abridged computation over all thresholds $y$, as discussed in \cref{ss:computational_aspects}, also works in the partial order case, the computational costs of the best algorithm we are aware of are exponential in the size of the largest antichain of the partial order graph on $\xi_1, \dots, \xi_m$. Flow-based algorithms for the PAV framework as described by (the 2022 update to) \citet{stoutFastestKnownIsotonic2019} cannot be applied, because they require a separable convex loss function. The quantity $\RKM{\cdot}(y)$ satisfies the CMV property, but this by itself is not enough to avoid NP-hardness of the calculation of \eqref{eq:partial_order_estimator}, and properties of the underlying Kaplan--Meier estimator $\KM{\cdot}(y)$ will need to be exploited to make progress on faster computation. For small numbers of unique covariate values $\xi_1, \dots, \xi_m$, like the groups in the paper of \PRK, or in special cases such as highly correlated covariates, a direct calculation of \eqref{eq:partial_order_estimator} is still feasible.

Our implementation of S-IDR with the recursive Kaplan--Meier estimator relies on an abridged version of PAVA that is analogous to the one proposed by \citet{henziAcceleratingPoolAdjacentViolatorsAlgorithm2022} and the one in the Supplementary Material of \PRK. However, the proof of validity in these references is not applicable to our case, since \citet{henziAcceleratingPoolAdjacentViolatorsAlgorithm2022} only treat the case of squared error loss, i.e., the mean function, and \PRK considers certain special concave loss functions.

\subsection{Consistency in the single index model framework}\label{aa:index_model_consistency}
We provide a proof sketch for consistency of S-IDR under the single index model described in \cref{s:extensions}, using similar arguments to \citet{henziDistributionalSingleIndex2023}. The main model assumption is as follows. 
\begin{condition}\label{cond:dim_model}
	There exists a family of CDFs $(F_u)_{u \in \R}$ and $\theta \colon \mathcal{X} \to \R$ such that
	\[
        \pr(Y \le y \mid \theta(X) = u) = F_{u}(y),
	\]
	with $F_u \leso F_v$ whenever $u \le v$.
\end{condition}
We denote by $(G_u)_{u\in\mathbb{R}}$ the conditional CDFs of the censoring variable given $\theta(X)$, that is,
\[
    G_u(c) = \pr(C \leq c \mid \theta(X) = u).
\]
Neither in \cref{cond:dim_model} nor in the definition of $G_u$ above do we assume that the index is a sufficient dimension reduction, i.e., $F_{\theta(x)}(y)$ is not necessarily equal to $\pr(Y \le y \mid X = x)$, and analogously for $G_{\theta(x)}$. It is only required that the CDFs $F_u$ are stochastically increasing, and, in \cref{cond:lipschitz} below, that $F_u$ and $G_u$ are sufficiently smooth.

The three conditions below are analogous to the setting with a univariate covariate.

\begin{condition} \label{cond:independence_cond_index}
The outcome and censoring variable are independent conditional on $\theta(X)$,
\[
    Y \independent C \mid \theta(X).
\]
\end{condition}

\begin{condition} \label{cond:index_dense}
The support of $\theta(X)$ is a bounded interval $\mathbf{I}$ and there exist constants $C_4, C_5 > 0$ such that for arbitrary intervals $\mathbf{I}_n \subset \mathbf{I}$,
\[
    \frac{|\{i \in \{1, \dots, n\}\colon \theta(X_{i}) \in \mathbf{I}_n\}|}{n\lambda(\mathbf{I}_n)} \geq C_4 \text{ whenever } \lambda(\mathbf{I}_n) \geq \delta_n = C_5 \rho_n^{1/3}
\]
with asymptotic probability one.
\end{condition}

\begin{condition}\label{cond:positive_prob_index}
There exist $\tau \in \Rp$ and $\eta > 0$ such that for all $u \in \mathbf{I}$,
\[
    \pr(T \leq \tau \mid \theta(X) = u) \leq 1 - \eta.
\]
\end{condition}

As in \citet{henziDistributionalSingleIndex2023}, we assume Lipschitz continuity of the conditional distribution functions, and in our setting also of the CDFs of the censoring variable.

\begin{condition} \label{cond:lipschitz}
There exist constants $C_6, C_7 > 0$ such that for all $u, v \in \mathbf{I}$ and $y \leq \tau$
\[
     |F_u(y) - F_v(y)| \leq C_6|u-v|, \qquad |G_u(y) - G_v(y)| \leq C_7|u-v|.
\]
\end{condition}

Finally, we require that the approximation $\hat{\theta}$ of $\theta$, up to a monotone transformation, converges at a sufficient rate.
\begin{condition}\label{cond:index_model_consistency}
	There exists a strictly increasing $g: \R \to \R$ and $C_8 > 0$ such that
	\[
		\lim_{n \to \infty} \pr\left(\sup_{x \in \mathcal{X}} |g(\hat{\theta}(x)) - \theta(x)| \ge C_8 \rho_n^{1/3}\right) = 0.
	\]
\end{condition}
This assumption could be relaxed to let $g$ vary with $\hat{\theta}$ (and with $n$); we assume the stricter \cref{cond:index_model_consistency} for simplicity. Note that $g$ is only applied to $\hat{\theta}$, never to $\theta$; the proof below goes through verbatim with $g(\hat{\theta})$ in place of $\hat{\theta}$ throughout, so $g$ does not interact with the conditions on $\theta$. It does interact with \cref{cond:index_dense}: the consistency of $g(\hat{\theta})$ --- and not of $\hat{\theta}$ itself --- is what propagates the asymptotic denseness from $\theta(X_i)$ to the pseudo-covariates $g(\hat{\theta}(X_i))$ (see \cref{lem:dense_in_index}).

Our proof is for a setting with sample splitting, which is how we apply the method in practice. The total number of observations is $n$ as before, but we split into two parts of size $n_1 = \lfloor \gamma n \rfloor$ and $n_2 = n - n_1$ for some $\gamma \in (0, 1)$.
The samples $((X_i, T_i, \Delta_i))_{i = 1}^{n_1}$ are used to estimate the index model $\hat{\theta}$, and the samples $((\hat{\theta}(X_i), T_i, \Delta_i))_{i = n_1 + 1}^{i=n_1 + n_2}$ for S-IDR. To prove consistency for S-IDR, we again assume that the estimator is constructed with a minimal block size $c_n$ in the index space on values $\{(\hat{\theta}(X_i))_{i = n_1 + 1}^{i=n_1 + n_2}\}$, similar to how blocks are chosen in \cref{aa:sidr_modified_consistency}. We denote the resulting estimator by $\hat{F}_{\hat{\theta}(x),\mathcal{I}(c_n)}(y)$, with any interpolation for the $x$ between the block anchors that respects the monotonicity assumptions.

\begin{theorem}[S-IDR with a single index]\label{th:sidr_single_index_consistency}
Assume that $n_1 = \lfloor \gamma n \rfloor$ and $n_2 = n - n_1$. Under \cref{cond:dim_model,cond:independence_cond_index,cond:index_dense,cond:positive_prob_index,cond:lipschitz,cond:index_model_consistency}, and for $k \in (0,1]$, there are constants $C_9, C_{10} > 0$ such that S-IDR with index estimator $\hat{\theta}$ satisfies
\[
    \mathbb P\left(
        \sup_{x\in \mathcal X_n,\; y\le \tau}
        \left|\hat{F}_{\hat\theta(x),\mathcal{I}(c_n)}(y) - F_{\theta(x)}(y)\right|
        \ge C_9\rho_{n_{\min}}^{1/3}
    \right) \to 0,
\]
as $n_{\min} := \min(n_1, n_2) \rightarrow \infty$, where $c_n = 2 k C_4 \max(C_5, C_8) n_2 \rho_{n_{\min}}^{1/3}$ and
\[
    \mathcal{X}_n := \left\{
        x\in\mathcal X:
        \bigl[\theta(x)\pm C_{10}\rho_{n_{\min}}^{1/3}\bigr] \subseteq \mathbf{I}
    \right\}.
\]
\end{theorem}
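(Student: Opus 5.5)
The proof reduces to the univariate argument of \cref{th:sidr_modified_consistency}, applied conditionally on the first subsample. Since $\hat{\theta}$ is built from $((X_i,T_i,\Delta_i))_{i\le n_1}$ only, the second subsample is i.i.d.\ given $\hat{\theta}$ and independent of it. S-IDR is invariant under strictly increasing transformations of the covariate, so we may work with pseudo-covariates $Z_i := g(\hat{\theta}(X_i))$, $i>n_1$, in place of $\hat{\theta}(X_i)$. We then compare the fit to the ``oracle'' covariates $\theta(X_i)$. On the event of \cref{cond:index_model_consistency}, every $i$ satisfies $|Z_i-\theta(X_i)|\le C_8\rho_{n_1}^{1/3}$.

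\textbf{Step 1: denseness of the pseudo-covariates.} This is the lemma referred to in the text as \cref{lem:dense_in_index}. For any interval $J$ of length at least $2\max(C_5,C_8)\rho_{n_{\min}}^{1/3}$, shrink $J$ by $C_8\rho^{1/3}$ on each side. \Cref{cond:index_dense} applied to the shrunken interval gives at least $C_4 n_2\lambda(J)/2$ indices $i$ with $\theta(X_i)$ in it, and hence with $Z_i\in J$. This yields \cref{cond:dense_covariates} for the $Z_i$ with $\alpha=1$, modified constants, and the stated $c_n$. \Cref{lem:order_statistics_close} then carries over: blocks of $\lceil c_n\rceil$ consecutive $Z$-ordered points have $Z$-diameter, and therefore $\theta$-diameter, of order $\rho_{n_{\min}}^{1/3}$ up to the additive $2C_8\rho^{1/3}$.

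\textbf{Step 2: the concentration term.} Conditionally on $\hat{\theta}$ and on the $X_i$, the pairs $(T_i,\Delta_i)$, $i>n_1$, are independent with laws $(F_{\theta(X_i)},G_{\theta(X_i)})$ by \cref{cond:independence_cond_index}. Strictly, $Y\mid X$ need not equal $F_{\theta(X)}$, because $\theta$ is not a sufficient reduction. I would handle this by conditioning only on $\theta(X_i)$ and $\hat{\theta}$, which is legitimate provided $\hat{\theta}(X_i)$ adds no information about $(Y_i,C_i)$ beyond $\theta(X_i)$. I expect this to be the main obstacle, and I would first try to add this as an explicit (implicitly assumed) independence so that the sketch goes through. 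Under it, \cref{th:dabrowska_adaptation} together with \cref{cond:positive_prob_index} and a union bound over the at most $n_2^2$ index intervals in $Z$-order gives $M_{n_2}=O_{\pr}(\log(n_2)^{1/2})$, exactly as in \cref{a:plain_survival_idr_consistency}.

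\textbf{Step 3: bias and assembly.} Fix $x\in\mathcal{X}_n$ and set $u=\theta(x)$, so that $|g(\hat{\theta}(x))-u|\le C_8\rho^{1/3}$. Repeat the chain of inequalities in the proof of \cref{th:sidr_modified_consistency}, with $j(x)$ and $r(x)$ chosen in $Z$-order within $[Z(x)-4\delta_n,Z(x)]$; the constant $C_{10}$ is chosen to absorb the $4\delta_n$ window plus $C_8\rho^{1/3}$. Each block realising the clamped estimator contains at least $c_n$ points whose $\theta$-values lie within $O(\rho^{1/3})$ of each other by Step 1. \Cref{lem:continuity_fbar} with $\epsilon=\max(C_6,C_7)\cdot O(\rho^{1/3})$ (\cref{cond:lipschitz}) then gives $\bar{F}_{\text{block}}\le F_{\theta(X_{r})}+O(\rho^{1/3})$. \Cref{cond:dim_model} (stochastic order in $u$) bounds the maximum over blocks to the right by the leftmost one. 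Finally, Lipschitz continuity in $u$ relates $F_{\theta(X_{r(x)})}$ to $F_u$, at a cost of $C_6\cdot O(\rho^{1/3})$.

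Summing the concentration term $M_{n_2}/\sqrt{c_n}=O(\rho_{n_{\min}}^{1/3})$ and the bias terms gives the upper bound $C_9\rho_{n_{\min}}^{1/3}$. The lower bound follows symmetrically. All the events used (\cref{cond:index_model_consistency}, denseness, the bound on $M_{n_2}$) have asymptotic probability one as $n_{\min}\to\infty$, which completes the proof.
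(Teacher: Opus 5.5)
Your architecture matches the paper's proof. You reduce to $g=\mathrm{id}$ by order invariance, establish denseness of the pseudo-covariates (\cref{lem:dense_in_index}), bound $M_{n_2}$ through \cref{th:dabrowska_adaptation} and a union bound, and then rerun the chain of \cref{th:sidr_modified_consistency}, with \cref{lem:continuity_fbar} and \cref{cond:lipschitz} controlling the bias.

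The obstacle you flag in Step 2 is a genuine gap, and the paper's proof does not close it either. The paper asserts the bound on $M_{n_2}$ ``by exactly the same steps'', with $\bar F_{\theta,r:s}$ built from the laws of $(T,\Delta)$ given $\theta(X)=\theta(X_i)$ alone. That is valid only if, given the first subsample, $(Y_i,C_i)\independent\hat\theta(X_i)\mid\theta(X_i)$, for instance when $(Y,C)\independent X\mid\theta(X)$. Otherwise the blocks $O_{r:s}$ are selected on information about $(Y_i,C_i)$ that $\theta(X_i)$ does not carry.

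This hypothesis cannot be dropped, as the following example shows.
\begin{itemize}
\item[] Take $X=(U,V)$ with $U\sim\mathrm{Unif}[0,1]$ and $V\sim\mathrm{Ber}(1/2)$ independent, $\theta(X)=U$, and no censoring. Let $Y\mid X$ have law $A$ if $V=1$ and law $B$ if $V=0$, where $A(y_0)>B(y_0)$ for some $y_0\le\tau$ and $A(\tau)+B(\tau)<2$. Then $F_u=(A+B)/2$ for all $u$, and every listed condition holds.
\item[] Set $\hat\theta(u,v)=u$ for $u<u_n$ and $\hat\theta(u,v)=u-(2v-1)\epsilon_n$ for $u\ge u_n$, with $\epsilon_n=C_8\rho_n^{1/3}/2$. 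For $a\le u_n-\epsilon_n$ and $b\ge u_n+\epsilon_n$,
\[
    \pr\bigl(Y\le y_0\mid\hat\theta(X)\in[a,b]\bigr)=\frac{A(y_0)+B(y_0)}{2}+\frac{\epsilon_n\,\bigl(A(y_0)-B(y_0)\bigr)}{2(b-a)}.
\]
\item[] Without censoring, $\KM{r:s}$ is an empirical CDF and the clamping is inactive. Hence $\hat F_{\xi_i}(y_0)\ge\min_{r\le i}\KM{r:s^*}(y_0)$, where $s^*$ is the first block end beyond $u_n+\epsilon_n$. Up to a sampling error of order $\rho_{n_{\min}}^{1/3}$, this gives a bias of order $\epsilon_n/u_n$ for every $\xi_i\le u_n-\epsilon_n$.
\item[] With $u_n=\rho_n^{1/3}\log n$, the bias is of order $1/\log n\gg\rho_{n_{\min}}^{1/3}$. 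It occurs at $x$ with $\theta(x)=\hat\theta(x)=u_n-2\epsilon_n$, which lies in $\mathcal X_n$ for any fixed $C_{10}$.
\end{itemize}
So your proposal, like the paper's proof, establishes the theorem only under this extra conditional-independence assumption, and it must be stated explicitly.

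Granting that assumption, your bias step is a mild variant of the paper's. The paper passes through the auxiliary mixture $\bar F_{\hat\theta,r:s}$, with laws indexed by $u=\hat\theta(X_i)$. Monotonicity across blocks then holds exactly along the ordered $\xi$'s, at the cost of comparing $\bar F_{\theta}$ with $\bar F_{\hat\theta}$ via \cref{lem:continuity_fbar}. You work directly with the $\theta(X_i)$, which is more economical. However, $\theta$ is not monotone in $Z$-order, so ``stochastic order bounds the right blocks by the leftmost'' needs the slack $\theta(X_i)\ge\theta(X_{r(x)})-2C_8\rho^{1/3}$, plus one more Lipschitz term of the same order.

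Your Step 1 threshold $2\max(C_5,C_8)\rho^{1/3}$ is also too small: shrinking by $C_8\rho^{1/3}$ on each side can leave an empty interval. You need $4\max(C_5,C_8)$, as in \cref{lem:dense_in_index}, and that is also what yields the stated $c_n$.
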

\begin{corollary}
    Under the same assumptions as for \cref{th:sidr_single_index_consistency}, if the single index model is well-specified, that is, the index $\theta$ is sufficient in the sense that
    \[
        \pr(Y \le y \mid X) = F_{\theta(X)}(y),
    \]
    then S-IDR is consistent for the true CDFs of $Y \mid X = x$ on $y \in [0,\tau]$ and for $x \in \mathcal{X}_n$.
\end{corollary}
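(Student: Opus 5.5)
The statement to prove is the corollary. I will reduce it to \cref{th:sidr_single_index_consistency}. That theorem already bounds $\sup_{x\in\mathcal X_n,\,y\le\tau}|\hat F_{\hat\theta(x),\mathcal I(c_n)}(y)-F_{\theta(x)}(y)|$ by $C_9\rho_{n_{\min}}^{1/3}$ with probability tending to one. Well-specification gives $\pr(Y\le y\mid X=x)=F_{\theta(x)}(y)$, which identifies the theorem's target $F_{\theta(x)}(y)$ with the true conditional CDF of $Y$ given $X=x$. Since $n_{\min}=\min(\lfloor\gamma n\rfloor, n-\lfloor\gamma n\rfloor)\to\infty$ as $n\to\infty$, we have $\rho_{n_{\min}}^{1/3}\to0$. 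Hence, for every $\epsilon>0$,
\[
\pr\Big(\sup_{x\in\mathcal X_n,\,y\le\tau}\big|\hat F_{\hat\theta(x),\mathcal I(c_n)}(y)-\pr(Y\le y\mid X=x)\big|\ge\epsilon\Big)\to0,
\]
which is uniform consistency on $[0,\tau]\times\mathcal X_n$.

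The remaining work is to prove \cref{th:sidr_single_index_consistency} itself. I would follow the proof of \cref{th:sidr_modified_consistency} with the pseudo-covariates $Z_i=g(\hat\theta(X_i))$, $i>n_1$, in place of $X_i$. Conditional on the first subsample, $\hat\theta$ is fixed, so the second subsample is i.i.d.

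\textbf{Step 1: a DKW bound on the second subsample.} I would apply \cref{th:dabrowska_adaptation} conditionally on $(Z_i)$. This requires the triples $(T_i,\Delta_i)$ to be independent given the $Z_i$, with known sub-survival functions $H_i^1,H_i$. These are mixtures over the conditional law of $\theta(X)$ given $Z=z$. By \cref{cond:index_model_consistency}, that conditional law is supported within $C_8\rho^{1/3}$ of $z$. This yields the same $M_n=O_P(\sqrt{\log n})$ bound as before.

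\textbf{Step 2: transferring denseness and positivity.} A lemma (\cref{lem:dense_in_index}) transfers \cref{cond:index_dense} from the $\theta(X_i)$ to the $Z_i$. Any interval of length $\lambda\ge 2\max(C_5,C_8)\rho^{1/3}$ in $Z$-space contains a shrunken interval of $\theta$-values of length at least $\lambda/2$. This explains the factor $2\max(C_5,C_8)$ in $c_n$. The mixtures in Step 1 also inherit the positivity bound of \cref{cond:positive_prob_index}.

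\textbf{Step 3: the bias bound.} Within a block of $Z$-width $O(\rho^{1/3})$, all underlying $\theta$-values lie within $O(\rho^{1/3})$ of each other. By \cref{cond:lipschitz}, the mixed $F$ and $G$ differ by $O(\rho^{1/3})$, so \cref{lem:continuity_fbar} bounds $|\bar F_{\text{block}}-F_{\theta(x)}|$ by $O(\rho^{1/3})$. The stochastic ordering of $F_u$ in $u$, together with the index error, replaces the exact monotone ordering step. I then run the same $\min$-$\max$ chain, using block anchors $r(x),j(x)$ within $O(\rho^{1/3})$ of $g(\hat\theta(x))$, and hence of $\theta(x)$.

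The main obstacle is Step 1. The mixing over $\theta(X)\mid Z$ breaks the exact $Y\independent C$ given the covariate, because given $Z$ only, $Y$ and $C$ share dependence through $\theta(X)$. I would handle this by treating each observation's $(T,\Delta)$ law as a mixture whose components are all within $O(\rho^{1/3})$ in sup-norm. I would then bound the discrepancy between the Kaplan--Meier limit of this mixture and $\bar F$ through \cref{lem:continuity_fbar} applied to the mixed $H^1,H$ directly, using the Gr\"onwall step from that lemma's proof.
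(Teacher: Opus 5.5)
Your first paragraph is correct and is the same argument the paper relies on. The paper gives no separate proof of the corollary, because under well-specification $F_{\theta(x)}(y)=\pr(Y\le y\mid X=x)$, so the bound of \cref{th:sidr_single_index_consistency} transfers directly, and $n_{\min}\to\infty$ gives $\rho_{n_{\min}}^{1/3}\to0$. The remaining sketch re-deriving the theorem is unnecessary, since the corollary assumes the theorem's hypotheses and you may simply cite it. (Incidentally, the shrinking argument in your Step~2 needs the threshold $4\max(C_5,C_8)\rho^{1/3}$, as in \cref{lem:dense_in_index}, rather than $2\max(C_5,C_8)\rho^{1/3}$.)
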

\begin{proof}
The proof follows the same strategy as the proof for the distributional single index model without censoring \citep[p.~503]{henziDistributionalSingleIndex2023}, combined with arguments from the proofs of \cref{th:sidr_plain_consistency} and \cref{th:sidr_modified_consistency}. We only show how to derive an upper bound on the estimation error; the arguments for the lower bound are analogous. For fixed $x$, let $X_j \in \{X_{n_1+1}, \dots, X_n\}$ be such that $g(\hat{\theta}(X_j)) \leq g(\hat{\theta}(x))$. Then,
\[
    \hat{F}_{\hat{\theta}(x),\mathcal{I}(c_n)}(y) - F_{\theta(x)}(y) \leq \hat{F}_{\hat{\theta}(X_j),\mathcal{I}(c_n)}(y) - F_{\theta(x)}(y) = \hat{F}_{g(\hat{\theta}(X_j)),\mathcal{I}(c_n)}(y;g) - F_{\theta(x)}(y),
\]
where $\hat{F}_{g(\hat{\theta}(X_j)),\mathcal{I}(c_n)}(y;g)$ denotes S-IDR applied with pseudo-covariates $g(\hat{\theta}(X_i))$, $i = n_1 + 1, \dots, n$. The equality $\hat{F}_{\hat{\theta}(X_j),\mathcal{I}(c_n)}(y) = \hat{F}_{g(\hat{\theta}(X_j)),\mathcal{I}(c_n)}(y;g)$ holds because $g$ is strictly increasing and the estimator only depends on the ordering of $\hat{\theta}(X_i)$. In the proof below, we show that
\[
    \hat{F}_{g(\hat{\theta}(X_j)),\mathcal{I}(c_n)}(y;g) - F_{\theta(x)}(y)
\]
can be bounded from above by a small error. Since $\hat{F}_{g(\hat{\theta}(X_j)),\mathcal{I}(c_n)}(y;g)$ only involves the transformed index $g(\hat{\theta}(\cdot))$, we can assume without loss of generality that $g$ is the identity function.

Denote by $\xi_1, \dots, \xi_m$ the distinct values of $\hat{\theta}(X_{n_1+1}), \dots, \hat{\theta}(X_n)$, and define
\[
    O_{r:s} = \{i \in \{n_1+1, \dots, n\}\colon \xi_r \leq \hat{\theta}(X_i) \leq \xi_s\}.
\]
We let $\KM{r:s}(y)$ be the Kaplan--Meier estimator over observations $(T_i, \Delta_i)$ with indices $i \in O_{r:s}$.
Define $\bar{F}_{\theta,r:s}(y)$ as the mixture in \eqref{eq:fbar_k}, over the (sub-)survival functions
\[
    H_i^1(t) = \pr(T > t, \Delta = 1 \mid \theta(X) = \theta(X_i)), \quad H_i(t) = \pr(T > t \mid \theta(X) = \theta(X_i)), \quad i \in O_{r:s},
\]
and similarly $\bar{F}_{\hat{\theta},r:s}(y)$ as the mixture with the (sub-)survival functions
\[
    H_i^1(t) = \pr(T > t, \Delta = 1 \mid \theta(X) = \hat{\theta}(X_i)), \quad H_i(t) = \pr(T > t \mid \theta(X) = \hat{\theta}(X_i)), \quad i \in O_{r:s}.
\]
By exactly the same steps as in the proof of \cref{th:sidr_plain_consistency}, with asymptotic probability one,
\[
	M_{n_2} := \max_{1 \leq r \leq s \leq m} \sqrt{|O_{r:s}|} \sup_{y \leq \tau}| \KM{r:s}(y) - \bar{F}_{\theta,r:s}(y)|
\]
is bounded from above by $\log(n_2)^{1/2}\max\{K^{1/2},864\eta^{-5}\}$ for any $K > 2/d_2$, where $d_2$ is the constant from \cref{th:dabrowska_adaptation}. Furthermore, $\hat{\theta}(X_i)$, $i = n_1 + 1, \dots, n$, are dense in the interval $\mathbf{I}$.

\begin{lemma}\label{lem:dense_in_index}
Under \cref{cond:index_dense} and \cref{cond:index_model_consistency}, the event that for arbitrary intervals $\mathbf{I}_n \subseteq \mathbf{I}$,
\[
    \frac{|\{i \in \{n_1+1, \dots, n\}\colon \hat{\theta}(X_i) \in \mathbf{I}_n\}|}{n_2\lambda(\mathbf{I}_n)} \geq \frac{C_4}{2} \text{ whenever } \lambda(\mathbf{I}_n) \geq 4\max(C_5, C_8)\rho_{n_{\min}}^{1/3},
\]
has asymptotic probability one.
\end{lemma}
\begin{proof}
Assume that $\lambda(\mathbf{I}_n) \geq 4\max(C_5, C_8)\rho_{n_{\min}}^{1/3}$. Then $\mathbf{I}_n$ contains the subinterval
\[
    \tilde{\mathbf{I}}_n :=  \left\{u \in \mathbf{I}_n \colon \inf_{v \not\in \mathbf{I}_n}|u-v| \geq C_8\rho_{n_2}^{1/3}\right\},
\]
which has length $\lambda(\tilde{\mathbf{I}}_n) \geq \frac12 \lambda(\mathbf{I}_n)$ and $\lambda(\tilde{\mathbf{I}}_n) \ge C_5 \rho_{n_2}^{1/3}$. On the event of \cref{cond:index_dense}, it therefore satisfies
\[
    |\{i \in \{n_1+1, \dots, n\}\colon \theta(X_i) \in \mathbf{\tilde{I}}_n\}| \geq C_4 n_2\lambda(\mathbf{\tilde{I}}_n) \ge \frac{C_4}{2} n_2 \lambda({\mathbf{I}_n}).
\]
On the event of \cref{cond:index_model_consistency}, we know that $\sup_{x \in \mathcal{X}}|\hat{\theta}_n(x) - \theta(x)| \leq C_8\rho_{n_2}^{1/3}$, so $\hat{\theta}(x) \in \mathbf{I}_n$ if $\theta(x) \in \mathbf{\tilde{I}}_n$. This yields the desired inequality
\begin{align*}
     |\{i \in \{n_1+1, \dots, n\}\colon \hat{\theta}(X_i) \in \mathbf{I}_n\}| \geq \frac{C_4}{2} n_2 \lambda(\mathbf{I}_n).
\end{align*}
\end{proof}

Use $\xi_1, \ldots, \xi_m$ to denote the unique and sorted values of $(\hat{\theta}(X_i))_{i=n_1+1}^{n_1+n_2}$ and let $\mathcal{I}(c_n) \subseteq \{1, \dots, m\}$ be the coarse index set on which the recursive estimator $\RKM{r:s}(y)$ is defined, as in \cref{aa:sidr_modified_consistency}. One can now follow the same steps as in the proof of \cref{th:sidr_modified_consistency} with $C_3$ replaced by $4\max(C_5, C_8)$, $C_2$ replaced by $C_4/2$, $\alpha = 1$, and $c_n = 2 k C_4 \max(C_5, C_8)n_2\rho_{n_{\min}}^{1/3}$, now working on the event of \cref{lem:dense_in_index}. Assume that $x$ is such that $[\hat{\theta}(x) - 4 \cdot 4\max(C_5, C_8)\rho_{n_{\min}}^{1/3}, \hat{\theta}(x)] \subseteq \mathbf{I}$. By the same arguments as for \cref{th:sidr_modified_consistency},
\[
    \KM{(i_{j-1}+1):i_j}(y) \leq \max\left(\bar{F}_{\theta,(i_{j-1}+1):i_j}(y)\colon j = \ell + 1, \dots, K \right) + \frac{M_{n_2}}{\sqrt{c_n}},
\]
where indices $j(x), r(x), i_{\ell}, \dots, i_K \in \mathcal{I}(m, c_n)$ are defined as in the proof of \cref{th:sidr_modified_consistency}. By \cref{cond:lipschitz}, \cref{cond:index_model_consistency} and \cref{lem:order_statistics_close}, \cref{lem:continuity_fbar}, also
\begin{align*}
    |\bar{F}_{\theta,(i_{j-1}+1):i_j}(y) - \bar{F}_{\hat{\theta},(i_{j-1}+1):i_j}(y)| \leq \frac{4\exp(\eta^{-1})}{\eta^2} \max(C_6, C_7)C_8\rho_{n_1}^{1/3}
\end{align*} 
with asymptotic probability one. Furthermore, since the pseudo-covariates $\hat{\theta}(X_i)$ are arranged such that $\hat{\theta}(X_i) \leq \hat{\theta}(X_j)$ for $i \leq j$, we can further bound
\[
    \bar{F}_{\hat{\theta}, (i_{j-1}+1):i_j}(y) \leq F_{\xi_{i_{j-1}+1}}(y) + \frac{4\exp(\eta^{-1})}{\eta^2}\max(C_6, C_7)\cdot 4\max(C_5, C_8)\rho_{n_{\min}}^{1/3}.
\]
Combining the above bounds yields
\begin{align*}
    & \hat{F}_{\xi_{j(x)},\mathcal{I}(c_n)}(y) - F_{\theta(x)}(y) \\
    & = \min_{r \leq j(x), r \in \{1\} \cup \{k + 1\colon k \in \mathcal{I}\}} \max_{s \geq j(x), s \in \mathcal{I}} \RKM{r:s, \mathcal{I}}(y) - F_{\theta(x)}(y) \\
    & \leq \max_{s \geq j(x), s \in \mathcal{I}} \RKM{r(x):s, \mathcal{I}}(y) - F_{\theta(x)}(y) \\
    & \leq \max\left(\bar{F}_{\theta,(i_{j-1}+1):i_j}(y)\colon j = \ell + 1, \dots, K \right) + \frac{M_{n_2}}{\sqrt{c_n}} - F_{\theta(x)}(y)\\
    & \leq \max\left(\bar{F}_{\hat{\theta},(i_{j-1}+1):i_j}(y)\colon j = \ell + 1, \dots, K \right) + \frac{4\exp(\eta^{-1})}{\eta^2} \max(C_6, C_7)C_8\rho_{n_1}^{1/3} + \frac{M_{n_2}}{\sqrt{c_n}} - F_{\theta(x)}(y)\\
    & \leq  \max\left(F_{\xi_{i_{j-1}+1}}(y)\colon j = \ell + 1, \dots, K \right) + \frac{4\exp(\eta^{-1})}{\eta^2}\max(C_6, C_7)\cdot 4\max(C_5, C_8)\rho_{n_{\min}}^{1/3} \\
    & \qquad +\frac{4\exp(\eta^{-1})}{\eta^2} \max(C_6, C_7)C_8\rho_{n_1}^{1/3} + \frac{M_{n_2}}{\sqrt{c_n}} - F_{\theta(x)}(y)\\
    & \leq \frac{4\exp(\eta^{-1})}{\eta^2}\max(C_6, C_7)\cdot 4\max(C_5, C_8)\rho_{n_{\min}}^{1/3} \\
    & \qquad +\frac{4\exp(\eta^{-1})}{\eta^2} \max(C_6, C_7)C_8\rho_{n_1}^{1/3} + \frac{M_{n_2}}{\sqrt{c_n}} + F_{\xi_{r(x)}}(y) - F_{\theta(x)}(y).
\end{align*}
Now $\xi_{r(x)} = \hat{\theta}(X_i)$ for some $i$ such that $|\hat{\theta}(X_i) - \theta(x)| \leq 16\max(C_5, C_8)\rho_{n_{\min}}^{1/3}$. Since $M_{n_2}$ is of order $\log(n_2)^{1/2}$, $c_n = 2kC_4\max(C_5, C_8)n_r\rho_{n_{\min}}^{1/3}$, and $n_1/n_2 \rightarrow \gamma / (1-\gamma)$, the overall error is of order $\rho_{n_{\min}}^{1/3}$. For the domain $\mathcal{X}_n$, notice that the arguments above assume
\[
    [\hat{\theta}(x) - 4 \cdot 4\max(C_5, C_8)\rho_{n_{\min}}^{1/3}, \hat{\theta}(x)] \subseteq \mathbf{I}_n,
\]
and under the event in \cref{cond:index_model_consistency}, a sufficient condition for this is that 
\[
    [\theta(x) \pm (C_8\rho_{n_1}^{1/3} + 4 \cdot 4\max(C_5, C_8)\rho_{n_{\min}}^{1/3}) ] \subseteq \mathbf{I}.
\]
\end{proof}

\section{Simulation details}\label{a:sidr_simulations}
This section elaborates on the performance comparison of \cref{ss:simulations}, detailing the problems on which \cref{fig:simulation} is based and the choice of metric.

\paragraph{Problem instances}
Each of the problems is described in detail in \cref{tab:simulation_problems} and visualised in \cref{fig:problem_grid}, with an example estimate from each method (S-IDR, \EBM, \PRK) shown in \cref{fig:problem_method_grid}. For each problem we specify
\begin{itemize}
    \item a covariate distribution, which is uniform on an interval $\mathbf{I}\subset\mathbb{R}$,
    \item a conditional distribution of the event time $Y\mid X$,
    \item a conditional latent censoring distribution $C^\star\mid X$, independent of $Y \mid X$.
\end{itemize}
A random variable $B \sim \mathrm{Ber}(\pi)$, independent of everything else, determines whether the observation is at risk of being censored:
\[
C \;=\; B\cdot C^\star + (1-B)\cdot(+\infty).
\]
The censoring risk $\pi$ is also specified in \cref{tab:simulation_problems}.

P1 is specified by \eqref{eq:illustration}. P2 is similar to the distribution of \cref{fig:post_op_survival} and contains hazard rate order violations, a high level of censoring and point mass censoring at thresholds, and is fully censored after some threshold. P3 is a collection of uniform distributions in a blocked form, approaching a uniform distribution on the left. P4 is like P3 but discrete, yet approaches the same uniform distribution on the left. P5 is a close analogue to \cref{fig:example_cmv_violation_population}, but with two repetitions of the pattern as specified in \cref{a:cmv_violation} instead of one. P6 repeats the pattern of \cref{a:cmv_violation} four times at changing frequencies, and approaches a point mass on the left.

\begin{table}[h]
    \centering
    \small
    \renewcommand{\arraystretch}{1.35}
    \begin{tabular}{@{}p{0.07\linewidth} p{0.04\linewidth} p{0.04\linewidth} p{0.03\linewidth} p{0.34\linewidth} p{0.34\linewidth}@{}}
        \toprule
        Problem & $\mathbf{I}$ & Dir. & $\pi$ & $Y\mid X=x$ & $C^\star\mid X=x$ \\
        \midrule
        P1 & $[0,10]$ & $\uparrow$ & $0.5$
        & $Y\sim\mathrm{Gamma}\bigl(k=\sqrt{x},\ \theta=\clamp(x,1,6)\bigr)$.
        & $C^\star\stackrel{d}{=}Y$ given $X$. \\
        \addlinespace
        P2 & $[0,1]$ & $\uparrow$ & $1.0$
        & Time-warped $\mathrm{Exp}(1)$: with $t_0=x/2$, $t_e=3/4$, the map
          $y\mapsto y_{\mathrm{eff}}=(y-t_0)/(t_e-t_0) \cdot \mathbb{1}\{y\le t_e\}+(1+y-t_e)\mathbb{1}\{y>t_e\}$
          pushes $\mathrm{Exp}(1)$ forward to $Y$.
        & Mixture (independent of $x$): with prob.\ $\tfrac13$, uniform over the four atoms $\{\tfrac14,\tfrac12,\tfrac34,1\}$; with prob.\ $\tfrac23$, piecewise-uniform on $[0,1]$ placing mass $\tfrac13$ on $[0,t_e]$ and $\tfrac23$ on $(t_e,1]$. \\
        \addlinespace
        P3 & $[0,1]$ & $\downarrow$ & $0.5$
        & Uniform on a union of $r(x)$ equally-spaced sub-intervals of length $(x+1)/(2r)$ that tile $[1-x,2]$ with gaps of equal length, each carrying mass $1/r$.
        & Uniform on $[1-x,2]$. \\
        \addlinespace
        P4 & $[0,1]$ & $\downarrow$ & $0.5$
        & Discrete uniform on the $r(x)$ atoms $y_k=(k/r)(x+1)+(1-x)$, $k=0,\ldots,r-1$.
        & Uniform on $[1-x,2]$ (as in P3). \\
        \addlinespace
        P5 & $[0,1]$ & $\uparrow$ & $1.0$
        & Discrete, two regimes. For $x<\tfrac12$: atoms $\{0,\tfrac12,1\}$ with weights $(0.495,0.495,0.01)$. For $x\ge\tfrac12$: atoms $\{\tfrac14,\tfrac34,1\}$ with weights $(0.495,0.495,0.01)$.
        & Discrete, two regimes. For $x<\tfrac12$: atoms $\{\tfrac38,1\}$ with weights $(0.99,0.01)$. For $x\ge\tfrac12$: atoms $\{\tfrac18,\tfrac58,1\}$ with weights $(0.9,0.099,0.001)$. \\
        \addlinespace
        P6 & $[0,1]$ & $\downarrow$ & $1.0$
        & Continuous; uniform with mass $1/B$ on each of the $B=3$ blocks $[\,1-2/(g+2k),\;1-2/(g+2k+1)\,]$, $k=0,1,2$. As $x\downarrow 0$, $g\to\infty$ and the support concentrates near $1$.
        & Discrete uniform on the $B+1=4$ atoms $\{\,1-2/(g+2k)\,\}_{k=0}^{B}$, each of weight $1/(B+1)$. \\
        \bottomrule
    \end{tabular}
    \caption{Summary of the six synthetic benchmarks. ``Dir.'' indicates whether $x \mapsto Y \mid X$ is isotonically increasing ($\uparrow$) or decreasing ($\downarrow$). The censoring probability $\pi$ is the Bernoulli probability that the latent censoring time $C^\star$ is applied; with probability $1-\pi$ the observation is forced to be uncensored ($C=+\infty$). The resolution $r(x)=2^{\lfloor-\log_2 x^2\rfloor}$ is used for P3, P4, and the group index $g(x)=\lfloor 2/x\rfloor$ for P6.}
    \label{tab:simulation_problems}
\end{table}

\begin{figure}
    \centering
    \includegraphics[
        width=0.98\linewidth,
        alt={A grid of 6 problems visualised in six rows, with three columns containing a scatter plot showing a random sample of the problem instance, a line plot showing the quantiles of the target distribution F, and a line plot showing the nuisance distribution.}
    ]{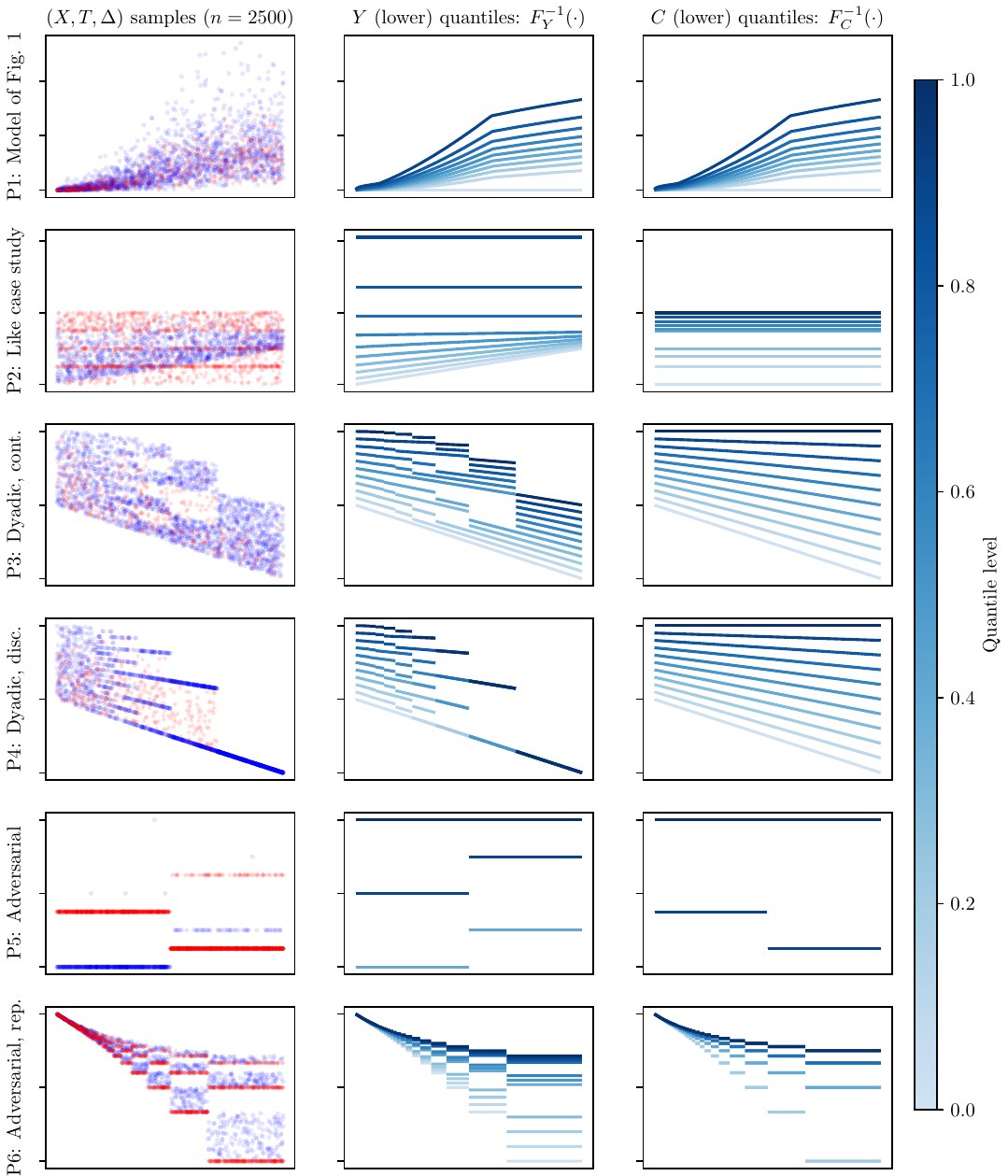}
    \caption{Test problems used in \cref{fig:simulation}, the distributions of interest $F_Y$ stochastically ordered. The left column shows an example instance, with uncensored observations marked blue and censored observations marked red. The two right-most columns show the (lower) quantile levels of the data and censoring distributions.}
    \label{fig:problem_grid}
\end{figure}

\begin{landscape}
  \begin{figure}[p]
    \centering
    \includegraphics[
        width=\linewidth,
        alt={A landscape-oreinted grid of 6 problems visualised in six rows, with mostly similar images in each column. The first column is the ground truth, then follow three column pairs for S-IDR; the EBM method, and the PRK method, once with bandwidth at the n to the minus one third rate and n to the minus one fifth rate.}
    ]{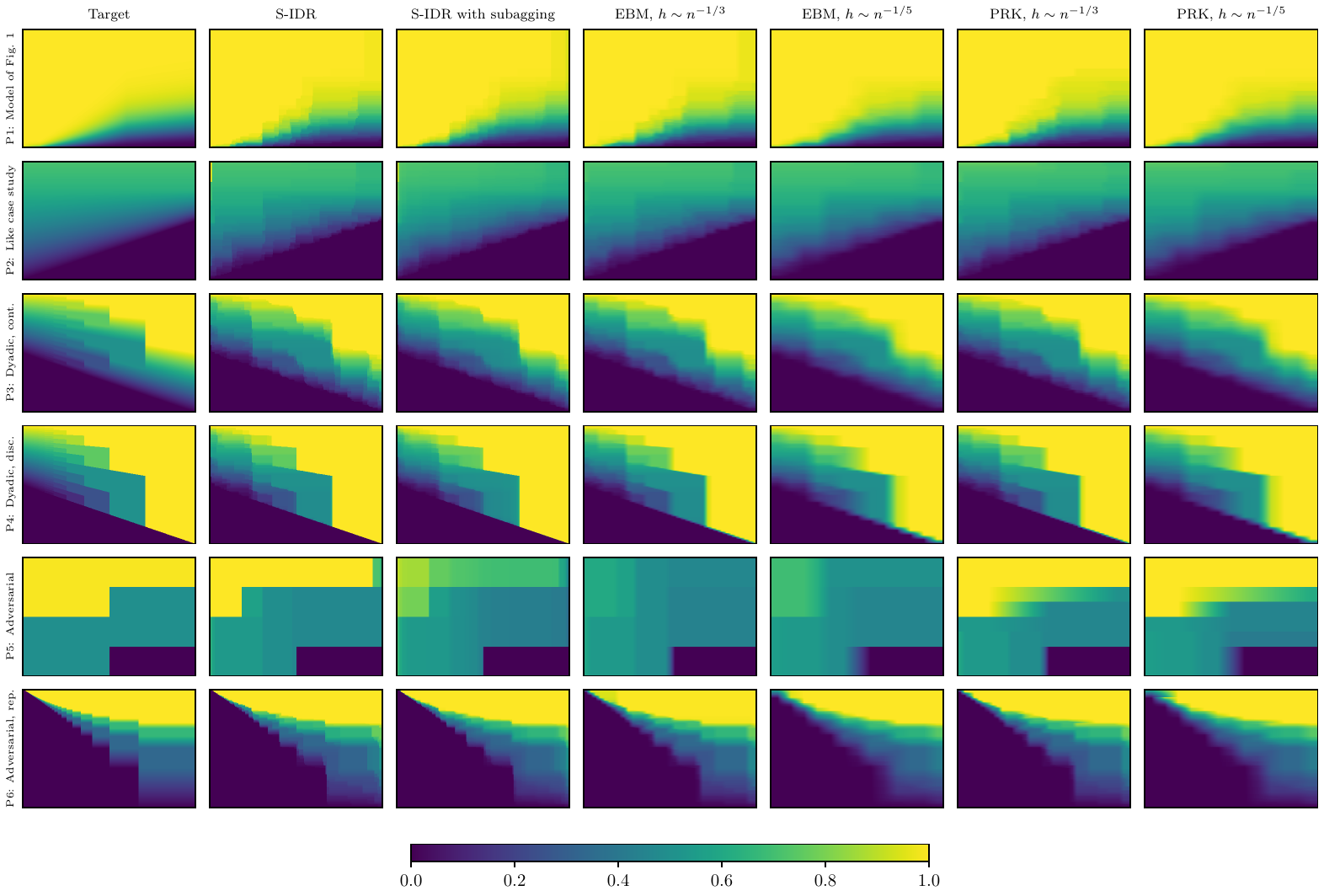}
    \caption{Estimated (sub-)CDFs of an example instance with $n = \num{2000}$ solved using each of the methods benchmarked in \cref{fig:simulation}.}
    \label{fig:problem_method_grid}
  \end{figure}
\end{landscape}

\paragraph{Metrics}
Because the estimates are merely sub-CDFs in general, distances between distributions like earth mover/Wasserstein metrics are not suitable candidates. The discrete nature of some test problems requires care, too. For example, the Kolmogorov--Smirnov test statistic, the maximum distance between (sub-) CDFs, will not converge to $0$ unless atoms in the estimated distribution align precisely. Similarly, statistics like the Cramér-von Mises statistic
\[
    \int (\hat{F}(x) - F(x))^2 dF(x),
\]
are not compatible with the interpolation strategies used by the methods being compared, in the sense that such metrics may not converge to $0$ if $Y$ has atoms. Instead, the chosen metric is an $L_1$ distance between CDFs restricted to some pragmatically chosen range:
\begin{equation}\label{eq:metric}
    \int_\mathcal{Y} |\hat{F}_x(y) - F_x(y)| \ind{y \in S_F} dy,
\end{equation}
where we choose $S_F$ as the convex hull of $\bigcup_{x \in \mathbf{I}} \operatorname{supp}(F_x)$ if it is bounded. When it is not, we take $S_F' := S_F \cap (-\infty, q]$ with $q = q_H = \sup_{x \in \mathbf{I}} H_x^{-1}(0.999)$. This quantity is then averaged over the covariate range.
The metrics are approximated by simulating \num{100} independent samples from each problem instance.

\paragraph{Bucketing choice for discrete methods}
While S-IDR decides its bucketing dynamically, the \EBM and \PRK methods require a bucketing choice. The ideal choice is problem specific, but to understand the sensitivity to this choice, a comparison of bucket counts is displayed in \cref{fig:bucketing_sensitivity}. On the adversarial problem P5 (see \cref{fig:problem_grid}), with a fixed sample size $n = \num{4000}$, the \EBM and \PRK methods are applied with a varying bucket count. The bucket boundaries are chosen at equally spaced quantiles of the covariate samples. For this problem, there appears to exist no bucketing choice for which the other methods perform as well as S-IDR.

\begin{figure}[h]
    \centering
    \includegraphics[
        width=0.5\textwidth,
        alt={Line plot with the horizontal axis the number of bucket used and the vertical axis the metric. S-IDR is constant and lower than the EBM (minimal at 2-5 buckets) and PRK (minimal at 5-1000 buckets).}
    ]{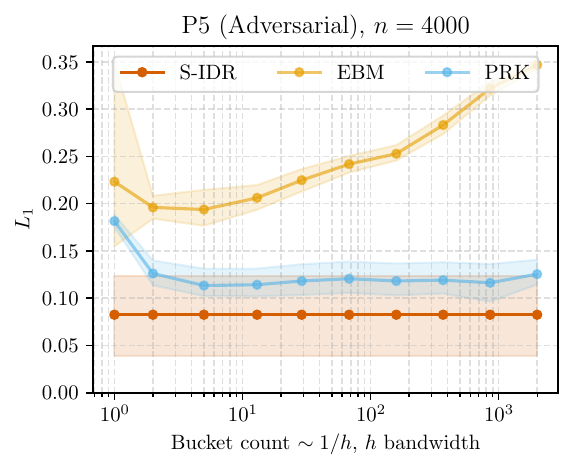}
    \caption{The metric \eqref{eq:metric} averaged over \num{100} random instances of the adversarial problem P5, lower is better. No bucketing granularity/bandwidth allows the \EBM and \PRK methods to reach the S-IDR performance in this problem.}
    \label{fig:bucketing_sensitivity}
\end{figure}

\section{Single index model benchmark details}\label{a:benchmark}
This section describes in detail the benchmark process that drives the results of \cref{ss:index_model_benchmark}. Our interest lies in the effect of using S-IDR as an index model in combination with a base model, \emph{relative} to the base model itself. There is a cost to sample splitting and fitting a nonparametric method, and we aim to understand under which circumstances this trade-off is favourable. All software packages referred to in this section are Python \citep{rossumPythonReferenceManual1995} packages.

A selection of mostly well-known data sets, summarised in \cref{tab:single_index_benchmark_data} and available in the `survdata` package, hosted at \url{https://github.com/vandenheuvel/survival-datasets}, was prepared for the benchmark in a standardised processing pipeline. The source for each data set is provided in the package.

Our goal is to compare performance with and without S-IDR, not to report absolute predictive performance. The pipeline is applied to the full data set before the train/test split for simplicity. This is information leakage, so the absolute test-set numbers are optimistic and not reported. For the relative comparison we report, however, both branches (base model alone, and base model composed with S-IDR) train on the identical sample and are evaluated on the identical test set; the same leakage is present in both branches and cancels in the differential. The following pipeline steps are applied:
\begin{enumerate}
    \item Feature columns with over 25\% missing data are dropped; the remaining missing values are mean-imputed.
    \item Date columns are converted to integer day counts and anchored at the origin, and categorical columns are one-hot encoded.
    \item To improve model fit and address numerical issues, observed times are shifted away from the origin by $1.001$, and if the right tail ratio (see \cref{tab:single_index_benchmark_data}) is above $8.0$, we $\log$-transform.
    \item Individual feature selection consists of a Cox PH model between each column and the target, keeping only the columns with $p$-value below $0.1$ (the implementation from the \texttt{lifelines} package \citep{davidson-pilonLifelinesSurvivalAnalysis2024} is used).
    \item Joint feature selection repeatedly drops columns that have a correlation to another column of at least 0.95, starting with the column that has the most pairwise correlations above that threshold (tie breaking by the mean correlation strength).
\end{enumerate}
\begin{table}
    \centering
    \begin{tabular}{lrrrrrrrr}
        \toprule
        Name & $n$ & $p$ & $p_\text{cat}$ & $p_\text{cat}^\text{enc}$ & $\hat{\pr}(\Delta = 0)$ & $\#\{T\}$ & $\#\{T | \Delta = 1\}$ & Tail ratio \\
        \midrule
        \texttt{aids} & \num{1151} & 11 & 8 & 24 & 0.92 & \num{264} & \num{76} & 0.73 \\
        \texttt{flchain} & \num{7874} & 9 & 5 & 39 & 0.72 & \num{2977} & \num{1738} & 0.39 \\
        \texttt{insurance (d)} & \num{11611} & 5 & 4 & 19 & 0.07 & \num{11} & \num{10} & 2.00 \\
        \texttt{insurance (h)} & \num{11611} & 5 & 4 & 19 & 0.07 & \num{10286} & \num{9714} & 187.68\rlap{*} \\
        \texttt{gbsg2} & \num{686} & 8 & 3 & 7 & 0.56 & \num{574} & \num{270} & 1.24 \\
        \texttt{lossalaefull} & \num{1500} & 2 & 0 & 0 & 0.02 & \num{542} & \num{541} & 14.94\rlap{*} \\
        \texttt{metabric} & \num{1904} & 9 & 0 & 0 & 0.42 & \num{1686} & \num{1011} & 1.44 \\
        \texttt{nhanes} & \num{14264} & 79 & 0 & 0 & 0.67 & \num{441} & \num{426} & 0.54 \\
        \texttt{seer} & \num{4024} & 14 & 0 & 0 & 0.85 & \num{107} & \num{100} & 1.00 \\
        \texttt{support} & \num{8873} & 14 & 0 & 0 & 0.32 & \num{1714} & \num{1027} & 2.38 \\
        \texttt{veterans} & \num{137} & 6 & 3 & 8 & 0.07 & \num{101} & \num{97} & 6.43 \\
        \texttt{whas500} & \num{500} & 14 & 8 & 16 & 0.57 & \num{395} & \num{162} & 1.45 \\
        \bottomrule
    \end{tabular}
    
    \caption{
        Key properties of the \num{12} data sets used in the benchmark: The number of observations before train/test split $n$, the initial dimensionality of the covariate $p$, number of categorical columns $p_\text{cat}$ and into how many one-hot columns they are encoded $p_\text{cat}^\text{enc}$, the share of the observations that is censored $\hat{\pr}(\Delta = 0)$, the number of unique times $\#\{T\}$ and how many of those had an uncensored observation $\#\{T | \Delta = 1\}$, and the right tail ratio (difference between the empirical $99^\text{th}$ percentile and median, divided by the interquartile range), with an * indicating that a $\log$-transform was performed. The \texttt{insurance (d)/(h)} collections are the claim duration in years and claim height of \texttt{freclaimset3fire9207} (subsampled from $n = \num{58056}$ for computational reasons).
    }
    \label{tab:single_index_benchmark_data}
\end{table}
Three base models are used, once standalone and once with S-IDR through sample splitting:
\begin{itemize}
    \item Accelerated Failure Time (AFT) model \citep{buckleyLinearRegressionCensored1979}, implemented in the \texttt{lifelines} package.
    \item Cox PH model \citep{coxRegressionModelsLifeTables1972}, implemented in the \texttt{scikit-survival} package \citep{polsterlScikitsurvivalLibraryTimetoevent2020}.
    \item Random Survival Forests \citep{ishwaranRandomSurvivalForests2008} (with \num{400} trees, or \num{250} trees when sample splitting with S-IDR, with default setting), also implemented in the \texttt{scikit-survival} package.
\end{itemize}
Then, we split each data set 50 times into an 80\% train and 20\% test set, each time fitting each base model and the base model as a single index model combined with S-IDR. The combined fit is done through sample splitting into two subsamples of equal size, repeated 50 times.

For the c-index \citep{harrellEvaluatingYieldMedical1982} and 1-calibration \citep{hosmerGoodnessofFitTestsLogistic1985, andresNovelLearningAlgorithm2018}, an evaluation time $t^\ast$ must be chosen, which we set to be the median of a Kaplan--Meier estimate over all samples. The other metrics --- the integrated Brier score (IBS) \citep{grafAssessmentComparisonPrognostic1999} and D-calibration \citep{haiderEffectiveWaysBuild2020} --- do not require such a choice.

\section{Reproducibility}
The materials used to produce the figures in this paper is available at \url{https://gitlab.math.ethz.ch/bramva/s-idr-experiments}, as well as via \doi{10.5281/zenodo.21161048}. They are R and Python notebooks, with their environments pinned through \texttt{renv} and \texttt{uv} respectively, the test data sets from \cref{ss:index_model_benchmark} are available through a contained python package. The code from \cref{ss:case_study} contains no data inputs, and also outputs are not provided, in accordance with the relevant data agreement.

\end{document}